\theoremstyle{plain}
   \newtheorem{theorem}{Theorem}[section]
   \newtheorem{proposition}[theorem]{Proposition}
   \newtheorem{lemma}[theorem]{Lemma}
   \newtheorem{corollary}[theorem]{Corollary}
   \theoremstyle{definition}
   \newtheorem{definition}[theorem]{Definition}
   \theoremstyle{remark}
   \newtheorem{remark}[theorem]{Remark}
   \newtheorem{condition}[theorem]{Condition}  
 \newtheorem{addendum}[theorem]{Addendum}
\newcommand{\RR}{{\mathbb R}}
\newcommand{\QQ}{{\mathbb Q}}
\newcommand{\Sd}{{\mathrm {Sd}}}
\newcommand{\inter}{{\mathrm {int}}}
\newcommand{\Aset}[3]{{A_#1(\underline{#2},#3)}}
\newcommand{\Jset}[3]{{J_#1(\underline{#2},#3)}}
\newcommand{\norm}[1]{\vert #1\vert}
\newcommand{\Burns}[3]{???}
\DeclarePairedDelimiter{\Realization}{\lvert}{\rvert}
\DeclarePairedDelimiter{\Norm}{\lVert}{\rVert}
\title{The cobordism category and Waldhausen's $K$-theory.}
\author{Marcel B\"o{}kstedt \\
Department of Mathematical Sciences\\
Aarhus University\\
Ny Munkegade\\
8000 Aarhus C\\
Denmark\\
\texttt{@email:marcel@imf.au.dk}
\and
Ib Madsen\\
Department of Mathematical Sciences\\
University of Copenhagen\\
Universitetsparken 5\\
2100 K\o{}benhavn \O{}\\
Denmark\\
\texttt{@email:imadsen@math.ku.dk}
}
\begin{document}

\maketitle

\tableofcontents

\setcounter{section}{-1}
\section{Introduction}

This paper examines the category 
${\cal C}^k_{d,n}$ whose morphisms are $d$-dimensional smooth
manifolds that are properly embedded in $I^k\times \RR^{d+n-k}$,
where $I^{ k}$ is a $k$-dimensional cube. There are $k$ directions to
compose $k$-dimensional cubes, so ${\cal C}^k_{d,n}$ is a (strict)
$k$-tuple category. The geometric realization of the $k$-dimensional
multi-nerve $N_{\bullet}{\cal C}^k_{d,n}$ is the classifying space
$B{\cal C}^k_{d,n}$. Its homotopy type is determined by theorem
\ref{th:homotopyType} below to be 
\begin{equation}
\label{eq:introHE}
B{\cal C}^k_{d,n}\simeq \Omega^{d+n-k}\mathrm{Th}(U^\perp_ {d,n})
\end{equation}
where $U_{{d,n}}^{\perp}$ is th $n$-dimensional canonical bundle over the
Grassmannian $G(d,n)$ of $d$-planes in $\RR^{n+d}$ and where 
$\mathrm{Th}$ denotes the Thom space, that is the one point
compactification  of $U_{d,n}^{\perp}$. 

For $k=1$ and $n=\infty$, the structure of 
$B{\cal C}^k_{d,n} $ was determined in \cite{GMTW}, using the sheaf
techniques of \cite{MadsenWeiss}. We note from (\ref{eq:introHE}) that
\[
\Omega B{\cal C}^k_{d,n}\simeq B{\cal C}^{k-1}_{d,n},\quad k\leq d+n.
\]
For $n\to  \infty$ we get a geometric interpretation of the $\Omega$-spectrum $MTO(d)$ of \cite{GMTW}, namely
\begin{equation}
\label{eq:introMTO}
MTO(d)\cong \big\{B{\cal C}^k_{d,\infty}\big\}^\infty_{k=1}.
\end{equation}
At the end of the paper in \S \ref{sec:Atheory}
we use (\ref{eq:introMTO}) to construct an infinite loop map
\begin{equation}
\label{eq:infLoop}
\Omega B{\cal C}^1_{d,n}\to A(BO(d))
\end{equation}
We believe that the map factors through
$\Omega^\infty\Sigma^\infty(BO(d)_+)$
and that the composite
\[
B\mathrm{Diff}(M^d)\to \Omega B{\cal C}^1_{d,n}\to A(BO(d))
\]
is homotopic to the map considered in \cite{DWW}.
 
Our method of proof of (\ref{eq:introHE}) is rather different from 
\cite{GMTW}. We begin with the abstract transversality theorem of
\S \ref{sec:critical-pairs-cuts}: Given a metric space $X$ and a closed subspace
$Z\subset X \times \RR^k$ with the property that $Z\cap(\{x\}\times \RR^k)$
has measure 0, we introduce a simplicial space 
$\vert K_\bullet(X,Z)\vert$ with a map to $X$ and show in theorem \ref{th:cuts} that 
\begin{equation}
\label{eq:introCUTS}
\vert K_\bullet(X,Z)\vert\xrightarrow{\sim} 
\end{equation}
is a homotopy equivalence.

Let $\Psi_d(\RR^{d+n})$ be the space of properly embedded $d$-dimensional smooth submanifolds of $\RR^{d+n}$ equipped with a topology where, roughly speaking, two manifolds are close if one is contained in a tubular neighbourhood of the other. For the space $X$ in (\ref{eq:introCUTS}) we take the subspace
$D^k_{d,n}\subset \Psi_d^{d+n}$ where $M\in D^k_ {d,n}$ if the projection
on the first $k$ coordinates is a proper map from $M$ to $\RR^k$. The space
$Z=Z^k_{d,n}\subset D^k_{d,n}\times \RR^k$ consists of pairs
$(M,\underline a)$ where $M$ fails to be transversal to the ``corners''
determined by $\underline a\in \RR^k$.

It turns out that $K_\bullet(D^k_{d,n},Z^k_{d,n})$ is homotopy
equivalent to the multi-nerve $N_\bullet({\cal C}^k_{d,n})$, so by (\ref{eq:introCUTS}) 
$B({\cal C}^k_{d,n})\simeq D^k_{d,n}$. An application of Gromov's general h-principle yields that $D^k_{d,n}$ is homotopy equivalent to the right hand side of (\ref{eq:introHE}).

The proof of (\ref{eq:introCUTS}) needs that $X=D^k_{d,n}$ or
$\Psi_d(\RR^{d+n})$ be metrizable. This is proved in the technical 
\S \ref{sec:topology}. It is finally in order to remark that \S
\ref{sec:simplicial-spaces} contains results about simplicial spaces, needed in the proof of (\ref{eq:introCUTS}) which may be of general interest.
 
We would like to acknowledge the inspiration from S. Galatius'
manuscript \cite{Galatius}.
\section{Abstract transversality}
\label{sec:critical-pairs-cuts}

A critical pair is a space $X$ and  a closed subset $Z\subset X \times \RR^k$.
We call $Z$ the critical datum, and use the notation $Z(x)=\{v\in \RR^k\mid (x,v)\in Z\}$.

Let $\RR^{k}$ be partially ordered by $\underline{a}_1 \leq \underline{a}_2$ if the inequality
is valid for each coordinate, that is if 
$a^{i}_1\leq a^{i}_2$ for $1\leq i \leq k$. For a totally  ordered
pair of vectors $\underline{a}_1 \leq \underline{a}_2\in \RR^k$ we consider the
set of $2^k$ vectors obtained by mixing the coordinates, i. e. the set
of $2^k$ vectors in the cube with southwest vertex $\underline{a}_{1}$ and
northeast vertex $\underline{a}_{2}$:
\[
V(\underline{a}_{1},\underline{a}_{2})=\{(a^1_{f(1)},a^2_{f(2)},\dots, a^k_{f(k)})\}\qquad f\colon \left\{1,\dots,k\right\} \to \left\{1,2\right\}.
\]
\begin{definition}
\label{def:compatible}
 A pair $\underline{a}_1\leq \underline{a}_2$ is compatible with $Z(x)\subset\RR^k$ 
if none of the vectors in 
$V(\underline{a}_{1},\underline{a}_{2})$
 is contained in $Z(x)$.  
\end{definition}
 
In particular, if $\underline{a}_{1} \leq \underline{a}_{2}$ are compatible, then $\underline{a}_{i}\not\in Z(x)$.

\begin{definition}
\label{def:cutSets}
The simplicial space of cut sets $K_\bullet(X,Z)$ is
given by
\[
K_q(X,Z) = \{x\in X, 
\underline{a}_{0} \leq \underline{a}_{1} \leq \dots \leq \underline{a}_{q}\text{; all pairs
  $\underline{a}_{i}\leq \underline{a}_{j}$ compatible}\},
\]  
topologized as subspace of $X\times \RR^{k(q+1)}$.

The simplicial space of discrete cut sets denoted by
$K_q^{\delta}(X,Z)$
is the same underlying set but topologized as a subset of $X\times (\RR^{k(q+1)\delta})$.
That is, we do not change the topology in the  $X$
direction, but discretize the topology in the $\RR^{k}$
direction.
\end{definition}

Let $N_\bullet(\RR^k)$ be the nerve of the partially ordered set
$\RR^{k}$. Then $K_q(X,Z)$ is a subspace of the product $X \times
N_{q}(\RR^{k})$, and we define the simplicial structure maps of
$K_\bullet(X,Z)$ so that this is an inclusion of simplicial spaces. 

The simplicial space $K_\bullet(X,Z)$ is the diagonal of a
$k$-dimensional simplicial space, since the $k$-dimensional boxes
of $K_1(X,Z)$ can be composed in $k$ different directions. For
$k=2$, $\omega=(x,\underline{a}_{0} \leq \underline{a}_{1}) \in K_1(X,Z)$ can be pictured as
the square in $\RR^{2}$ whose vertices $V(\underline{a}_0,\underline{a}_{1})$ all lie outside
   $Z(x)$. These squares can be composed horizontally and
    vertically, and we let $K_{p,q}(X,Z)$ denote the grid of $p \times
    q$ squares all of whose vertices are outside $Z(x)$. Removing
    vertical and horizontal edges defines the bi-simplicial
structure on $K_{\bullet,\bullet}(X,Z)$. Its diagonal simplicial
space is $K_{\bullet}(X,Z)$. The situation is similar for $k>2$.

\begin{remark}
  One can interpret $K_{\bullet}(X;Z)$ as the nerve of a 
strict $k$-tuple category in the sense of \cite{Leinster}, defined
inductively as follows: A strict 0-category is a set, and a strict  
$k$-tuple category is
 a category object in the category of strict $k-1$-tuple categories.
A strict 2-tuple category is a pair of small categories ${\cal C}_{0},{\cal
  C}_{1}$
together with functors
\[
\xymatrix{
 {\cal C}_{1}\ar@<6pt>[r]^{d_{1}}\ar@<-6pt>[r]_{d_{0}}&{\cal C}_{0}\ar[l],&
\quad {\cal C}_{1}\times_{{\cal C}_{0}}{\cal
  C}_{1}\xrightarrow{\circ}{\cal C}_{1},
}
\]
In particular, the objects of ${\cal C}_{1}$ are the morphism of
a category ${\cal C}^{\prime}$, whose objects are the objects of ${\cal C}_{0}$.

An element $\omega\in N_{1}{\cal C}_{1}=\mathop{mor} {\cal C}$ gives rise to a square
\[
\xymatrix@C=1.5cm{
\underline{a}_{1,1}\ar[d]^{f_{1,1}}\ar[r]^{g_{1,1}}&\underline{a}_{0,1}\ar[d]^{f_{0,1}}\\
\underline{a}_{1,0}\ar[r]^{g_{1,0}}&\underline{a}_{0,0}\ ,\\
}
\]
where $g_{1,1}=d_{0}\omega$, $g_{1,0}=d_{1}\omega$ and $\underline{a}_{i,j}$
is the source and target of the morphisms $g_{1,1}$ and $g_{1,0}$
in ${\cal C}_{0}$. The vertical arrows are the objects in
${\cal C}_{1}$ which are the source and target of $\omega$. 
They are considered as morphisms in ${\cal C}^{\prime}$. 

These diagrams can be composed horizontally and vertically
to defines bi-simplicial set (or space, in case one deals with
topological categories) 
$N_{\bullet,\bullet}({\cal C}_{1},{\cal C}_{0})$.

In the case of a critical pair $Z\subset X\times \RR^{2}$, the
associated 2-tuple category is defined as follows:
\begin{align*}
K_{0}(X;Z)&=N_{0}{\cal C}_{0}= X\times \RR^{2}\setminus Z,\\
N_{1}{\cal C}_{0}&= \{(x,\underline{a}_{1},\underline{a}_{2})\in X \times \RR^{2}\times
\RR^{2}\mid
\underline{a}_{1}\leq \underline{a}_{2},\underline{a}_{1}^{2}=\underline{a}_{2}^{2},
V(\underline{a}_{1},\underline{a}_{2})\cap Z(x)=\emptyset\},
\\
N_{0}{\cal C}_{1}&= \{(x,\underline{a}_{1},\underline{a}_{2})\in X \times \RR^{2}\times
\RR^{2}\mid
\underline{a}_{1}\leq \underline{a}_{2},\underline{a}_{1}^{1}=\underline{a}_{2}^{1},
V(\underline{a}_{1},\underline{a}_{2})\cap Z(x)=\emptyset\},
\\
K_{1}(X;Z)&=N_{1}{\cal C}_{1}=\{(x,\underline{a}_{1},\underline{a}_{2})\in X\times \RR^{2}\times \RR^{2}\mid
\underline{a}_{1}\leq \underline{a}_{2}, V(\underline{a}_{1},\underline{a}_{2})\cap Z(x)=\emptyset\}.
\end{align*}
Then $K_{\bullet,\bullet}(X;Z)=N_{\bullet,\bullet}({\cal C}_{1},{\cal
  C}_{0})$.
The situation for $k\geq 2$ is similar, since the nerve of a
simplicial object in the category of $(k-1)$-dimensional simplicial
sets
is a $k$-dimensional simplicial set.
\end{remark}

Our first result is that the topology on the Euclidean factor does not
matter much.

\begin{theorem}
\label{th:notopology}
Assume that $X$ is a metrizable space.
The map $\lambda_{Z} : K_\bullet^{\delta}(X;Z) \to K_\bullet(X;Z)$
is a weak homotopy equivalence.
\begin{proof}
We will consider certain subspaces of 
$K_\bullet(X;Z)$. For a subset  $W \subset \RR^{k}$,  
let $K_\bullet(X;Z)(W)$ consist of all simplices of $K_\bullet(X;Z)$ whose
vertices are contained in $W$. Given a simplex 
$(x,\underline{a}_0,\dots,\underline{a}_r)\in K_r(X;Z)$
with $\underline{a}_0 < \dots <\underline{a}_r$, we can find an open
neighbourhood $U\subset X$ of $x$ and disjoint axis-parallel
cubes $W_i$ containing $\underline{a}_i$ with the following properties.

\begin{enumerate}[(i)]
\item The set $\{W_i\}$ is totally ordered in the sense that for
$\underline{w}_i\in W_i$, $\underline{w}_0<\dots <\underline{w}_r$.
\item For $y\in U$ and $\underline{w}_0<\dots <\underline{w}_r$ as in (i),
$(y,\underline{a}_0,\dots,\underline{a}_r) $ determines a
$k$-simplex of $K_\bullet(X;Z)$, i.e.
$K_{\bullet}(U;Z\cap (U\times \RR^{k}))(W)=K_{\bullet}(U;\emptyset)(W)$,
where $W$ is the disjoint union of the $W_i$.
\end{enumerate} 
This follows because the condition that
$(y,\underline{a}_0,\dots,\underline{a}_r)$ belongs to
$K_{\bullet}(U;\emptyset)(W)$ is an open condition.

We consider $W=\coprod{W_i}\subset \RR^k$ and $U\subset X$ of
the above type (satisfying (i) and (ii) above, 
and note that if $(U,W)$ and
$(U,W^\prime)$ are two such pairs, then so is
$(U,W\cap W^\prime)$. 

{\em Claim 1.} If $(W,U)$ is a pair satisfying the two conditions, 
the maps
\[
\lambda_{Z} : K_\bullet^{\delta}(U;Z\cap (U\times \RR^{k}))(W) 
\to K_\bullet(U;Z\cap(U\times \RR^{k}))(W)
\to U
\]
are both homotopy equivalences. 
Because of the second condition, 
$K_\bullet(U;Z\cap (U\times \RR^{k}))(W)\cong
U \times K_{\bullet}(p,\emptyset)(W)$, 
where $p$ is a one point space, so it is enough to show that 
both $K_{\bullet}(p,\emptyset)(W)$ and
$K_{\bullet}^{\delta}(p,\emptyset)(W)$ are contractible.

The usual cofinality argument easily shows that 
$K_{\bullet}^{\delta}(p,\emptyset)(W)$ 
is contractible. Indeed,
it's enough to show that if $A$ is a compact space, then any map
$f:A\to \Realization{K_{\bullet}^{\delta}(p,\emptyset)(W)}$ 
is homotopic to a constant map.
But the image of a compact set will be contained in a finite simplicial subset
$X_\bullet\subset K_{\bullet}^{\delta}(p,\emptyset)(W)$. 
The vertices of $X_\bullet$ are given by finitely many points of $W$. 
To a finite set of points in $W$ there is a point $\underline b\in W$ strictly 
greater than each one of them. The
image of $f$ is then contained in the cone of $X_\bullet$ with vertex at $\underline b$, so that
$f$ is homotopic to a constant map.

Now the claim is reduced to proving that 
$K_{\bullet}(p,\emptyset)(W)=N_\bullet(W,\leq)$ is also contractible.
This simplicial space is the nerve of the topological partially 
ordered set $W$.
Suppose that $W=\cup_{1\leq i\leq c} W_{i}$ where $W_{i}$ are the
components of $W$, in the given total order. Let $\underline a<\underline b\in \RR^{k}$ be
given such that the largest component of $W$ is
\[
W_{c}=\{\underline x\in \RR^{k}\mid \underline a<\underline x<\underline b\}.
\]
and let 
\[
W_{c}^{\prime}=\{\underline x\in \RR^{k}\mid \underline a < \underline x < (\underline a+\underline b)/2\}\subset W_{c}.
\]
We put $W^{\prime}=(\cup_{1\leq i \leq c-1}W_{i})\cup W_{c}^{\prime}$.
Let $i\colon W^{\prime}\to W$ be the inclusion, and let
$h:W\to W^{\prime}$ the map which is the identity on $W_{i}$ for
$i\leq c-1$, and $h(\underline x)=(\underline a+\underline x)/2$ for $\underline x\in W_{c}$. 
Since $i\circ h(\underline x)\leq \underline x$, there is a
natural transformation from the functor $i\circ h$ of the category
$(W,\leq)$ to the
identity, so that the identity on $K_{\bullet}(p,\emptyset)(W)$
factors up to homotopy over the inclusion
$i:K_{\bullet}(p,\emptyset)(W^{\prime})\to
K_{\bullet}(p,\emptyset)(W)$. 
Let $w\in W$ be larger than every element in $W^{\prime}$. The nerve of
$\{W^\prime \cup \{w\}\}$ is contractible, since it has a largest
element. If follows that the composite 
$i:W^\prime \subset \{W^\prime \cup \{w\}\} \subset W$ induces a map on
nerves which is homotopic to a constant map, This proves 
that $K_{\bullet}(p,\emptyset)(W)$ is contractible,
and finishes the proof of claim 1.

Note that it follows trivially from this computation the natural map 
$K_{\bullet}^{\delta}(p,\emptyset)(W)\to K_{\bullet}(p,\emptyset)(W)$ 
is a homotopy
equivalence. 

{\em Claim 2.} For any finite set of such $\{W^j\}_{1\leq j\leq n}$ and 
$U^j\subset X$, $j=1,\dots,n$ which satisfy (i) and (ii), 
the natural map
\[\bigcup_{j}\Realization{K_{\bullet}^{\delta}(U^j;Z\cap(U^j\times \RR^{k}))(W^j)}
\to\bigcup_{j}\Realization{K_{\bullet}(U^j;Z\cap(U^j\times \RR^{k}))(W^j)}
\]
 is a homotopy equivalence.
The proof is by induction on $n$. The induction start is claim 1
above. Let us put
$A_{\bullet}^{j}=K_{\bullet}(U^{j};Z\cap(U^{j}\times
  \RR^{k}))(W^{j})$ and 
$A_{\bullet}^{j,\delta}=K_{\bullet}^\delta(U^{j};Z\cap(U^{j}\times
  \RR^{k}))(W^{j})$
Consider the diagram
\[
\begin{CD}
\bigcup_{1\leq j\leq n-1}\Realization{A_{\bullet}^{j,\delta}}
@<<<
(\bigcup_{1\leq j\leq n-1}\Realization{A_{\bullet}^{j,\delta}})\cap
\Realization{A_{\bullet}^{n,\delta}}
@>>>
\Realization{A_{\bullet}^{j,\delta}}\\
@VVV @VVV @VVV \\
\bigcup_{1\leq j\leq n-1}\Realization{A_{\bullet}^{j}}
@<<<
(\bigcup_{1\leq j\leq n-1}\Realization{A_{\bullet}^{j}})\cap
\Realization{A_{\bullet}^{n}}
@>>>
\Realization{A_{\bullet}^{n}}
\end{CD}
\]
By induction the left and the middle vertical maps are homotopy equivalences,
and claim 1 says the right vertical map is a homotopy equivalence,
so the diagram induces a homotopy equivalence from the homotopy pushout
of the upper row to the homotopy pushout of the lower row.
We need to show that the homotopy pushout of the rows are
homotopy equivalent to the degreewise pushouts.

Degreewise, the pushout
\[
\begin{CD} 
(\bigcup_{1\leq j\leq n-1}A_{p}^{j})\cap
A_{p}^{n} @>>>
\bigcup_{1\leq j\leq n-1}A_{p}^{n}\\
@VVV @VVV \\
A_{p}^{n}@>>>
\bigcup_{1\leq j\leq n}A_{p}^{n}
\end{CD}
\]
is a homotopy pushout, because it is the union of two open sets
in a normal space (this uses the condition that $X$ is metrizable). 
But the realization of a degreewise homotopy pushout
diagram of simplicial spaces is a homotopy pushout, because
the realization of a degreewise mapping cylinder is
homeomorphic to the mapping cylinder of the realization.
This concludes the induction step, and finishes the proof of 
claim 2.

{\em Claim 3.} Let $p\in \Realization{ K_\bullet(X;Z)}$. There is a set $W^{p}\subset \RR^{k}$ satisfying 
conditions (i) and (ii) above, and an open set $U^{p}\subset X$, such that
$p$ is in the image of the natural map 
\[
\Realization{K_{\bullet}(U^{p},Z\cap (U^{p}\times \RR^{k}))(W^p)})\to
\Realization{ K_\bullet(X;Z}.
\]
The point $p$ is in the
image of a characteristic map of a simplex determined by
$x\in X$ and a totally ordered set of 
vectors $\underline{a}_{0} < \underline{a}_{1} < \dots < \underline{a}_{r}$. The point $x$ and the
vectors $\underline{a}_{i}$ satisfy a number
of conditions, determined by the closed set $Z\subset X\times
\RR^{k}$. 
These conditions are open conditions. This means that there is an
open set $U^{p}\subset X$ and a set of
open cubes $C_{i}(a)\ni \underline{a}_{i}$, such that $C_{0}<C_{1}<\dots <C_{k}$,
and so that if we put $W^{p}=\cup_{i}C_{i}$ $u\in U$, then
the simplex in $K_{r}(X;Z)$ given by $(u,\underline b_{0}\leq \underline b_{1}\leq \dots \leq
\underline b_{r})$
is contained in $K_r(U;Z\cap(U\times \RR^{k}))$ 
if all $\underline b_j\in W^{p}$ and  $u\in U^{p}$. This proves claim
3.In conclusion, we have a covering of $\Realization{K(X;Z)}$ by 
realizations of simplicial subsets
$K_{\bullet}(U,Z\cap(U\times \RR^{k}))$. According to
claim 2, these sets have the property that the 
map from the corresponding discrete version is a homotopy equivalence.
If these subsets were open, we would
be done. In general they are not open, but they are degreewise
open in the simplicial space $K_{\bullet}(X;Z)$. Theorem~\ref{th:opencover} below completes the proof.
\end{proof}
\end{theorem}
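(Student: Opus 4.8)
The plan is to reduce the statement to a local assertion in which the critical datum $Z$ plays no role, and then to globalise. Let $p$ be a point of $\Realization{K_\bullet(X;Z)}$; it lies in the image of the characteristic map of some nondegenerate simplex, given by $x\in X$ and a strictly increasing chain $\underline a_0<\underline a_1<\dots<\underline a_r$ in $\RR^k$. The conditions that place $(x,\underline a_0,\dots,\underline a_r)$ in $K_r(X;Z)$ — namely $\underline a_i\leq\underline a_j$ together with $V(\underline a_i,\underline a_j)\cap Z(x)=\emptyset$ for all $i\leq j$ — are open. So I would choose an open neighbourhood $U\ni x$ and pairwise disjoint, totally ordered, axis-parallel open cubes $W_i\ni\underline a_i$ such that, writing $W=\coprod_i W_i$ and letting $K_\bullet(X;Z)(W)$ denote the simplicial subspace of those simplices all of whose vertices lie in $W$, one has
\[
K_\bullet\bigl(U;Z\cap(U\times\RR^k)\bigr)(W)=K_\bullet(U;\emptyset)(W)\cong U\times K_\bullet(p;\emptyset)(W),
\]
with the same identity for the discrete model; the pairs $(U,W)$ are closed under intersection in the $W$-variable, which keeps the later bookkeeping under control.

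On such a local piece the comparison map is a homotopy equivalence because everything in sight is contractible over $U$. For the discrete model this is the standard cofinality argument: a compact set maps into a finite simplicial subcomplex, whose vertices are finitely many points of $W$, and since any finite subset of $W$ has a strict upper bound in $W$, coning at such a bound contracts the subcomplex. For the topological model, $K_\bullet(p;\emptyset)(W)=N_\bullet(W,\leq)$ is the nerve of the topological poset $W$: writing $W=W_1\cup\dots\cup W_c$ in increasing order with largest component $W_c=\{\underline x\in\RR^k\mid\underline a<\underline x<\underline b\}$, the order-preserving self-map of $W$ equal to the identity off $W_c$ and sending $\underline x\in W_c$ to $(\underline a+\underline x)/2$ lies pointwise below the identity, hence gives a natural transformation to the identity; thus $\mathrm{id}_{N_\bullet(W,\leq)}$ factors up to homotopy through the inclusion $N_\bullet(W',\leq)\to N_\bullet(W,\leq)$ of the proper image sub-poset $W'$, which in turn factors through the contractible nerve of $W'\cup\{w\}$ for any $w\in W$ exceeding every element of $W'$; hence $N_\bullet(W,\leq)$ is contractible.

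To pass from a single piece to a finite union $\bigcup_j\Realization{K_\bullet(U^j;Z\cap(U^j\times\RR^k))(W^j)}$ I would induct on the number of pieces, comparing the discrete and topological models on $\bigcup_{j<n}(-)$, on its intersection with the $n$-th piece, and on the $n$-th piece itself. The induction hypothesis handles the first two, the local case the third, so the induced map of homotopy pushouts is a homotopy equivalence. Metrizability enters here twice: degreewise, the pertinent square is the pushout of two open subsets of a normal space, hence a genuine homotopy pushout; and the realization of a degreewise homotopy pushout of simplicial spaces is again a homotopy pushout, because realization sends degreewise mapping cylinders to mapping cylinders of realizations.

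The real obstacle is the global-to-local step. The subspaces $\Realization{K_\bullet(U^p;Z\cap(U^p\times\RR^k))(W^p)}$ cover $\Realization{K_\bullet(X;Z)}$, but they are not open in the realization — only degreewise open in the simplicial space $K_\bullet(X;Z)$ — so one cannot simply quote the principle that a map which is an equivalence over each member and each finite intersection of an \emph{open} cover is a weak equivalence. Instead I would establish, and then invoke, a version of that local-to-global principle valid for degreewise-open covers of simplicial spaces; this is precisely Theorem~\ref{th:opencover} below. Verifying that the cube-and-compatibility conditions stay open once the simplicial degree is fixed, and setting up this degreewise-open machinery, is where I expect the genuine work to lie.
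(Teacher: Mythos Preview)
Your proposal is correct and follows essentially the same route as the paper: the same local pieces $(U,W)$, the same cofinality argument for the discrete model, the same shrinking map $h(\underline x)=(\underline a+\underline x)/2$ and cone-off via $W'\cup\{w\}$ for the topological model, the same induction over finite unions via homotopy pushouts (using metrizability for degreewise normality), and the same final appeal to Theorem~\ref{th:opencover} to handle the fact that the cover is only degreewise open. There is nothing substantive to add.
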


We can now deal with the simplest special case.

\begin{lemma}
\label{le:pointspace}
Let $p$ be a one point space, $Z\subset \RR^k$ a closed set of measure zero.
Then $\Realization{K(p;Z)}$ is weakly contractible.
\begin{proof}
By theorem~\ref{th:notopology}, 
it is sufficient to prove that 
$\Realization{K_{\bullet}^{\delta}(p,Z)}$ 
is weakly contractible. It's enough to show that the 
inclusion of any finite subcomplex 
of $\Realization{K_{\bullet}^{\delta}(p,Z)}$ is homotopic
to a constant map. Any finite subcomplex will involve finitely many simplices,
which are defined using a finite set of points $\underline a\in \RR^{k}\setminus
Z$. 

Assume that $A=\{\underline{a}_i\}_{1\leq i\leq n}$ is a
finite subset of $\RR^{k}\setminus Z$, and 
$i_A\colon G(A)\subset K_\bullet^{\delta}(p,Z)$ the simplicial subset 
consisting of all simplices of $K_\bullet^{\delta}(p,Z)$ with vertices in $A$.
It suffices to show that 
the map $i_A$ is homotopic to a constant map. 
We show inductively that for any $0\leq m\leq n$ we can find a $\underline b$ such
that $\underline{a}_{i}\leq \underline b$ for $1\leq i\leq n$, and such that $\underline{a}_{i}$ and $\underline b$
are compatible for $i\leq m$.
It follows that $G(A\cup \{\underline b\})$ 
is contractible, since it is a cone on the vertex $\underline b$. For $m=0$ pick any point $\underline b\in \RR^k$ larger than all the $\underline{a}_{i}$.  

To do the induction step, assume that we can find a 
$\underline b^{\prime}$ such that $\underline{a}_{i} \leq
\underline b^{\prime}$ for all $i$, and such that $\underline{a}_{i}$ and $b^{\prime}$ are compatible 
for $1\leq i\leq m-1$. Since $Z$ is closed, there is
an open cube centered on $\underline b^{\prime}$, such that any point in this cube is also
greater than all $\underline{a}_{i}$ and compatible with $\underline{a}_{i}$, $1\leq i\leq m-1$. 
Since $Z$ has measure 0, there is at least one point 
$\underline b$ in this cube that satisfies
that $\underline{a}_{m}$ and $\underline b$ are compatible. 

\end{proof}
\end{lemma}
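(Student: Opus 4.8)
The plan is to use Theorem~\ref{th:notopology} to replace $K_\bullet(p;Z)$ by $K_\bullet^\delta(p;Z)$, so that the lemma reduces to the assertion that $\Realization{K_\bullet^\delta(p;Z)}$ is weakly contractible. Since $p$ is a point and the $\RR^k$-direction of $K^\delta$ is discretized, $K_\bullet^\delta(p;Z)$ is an ordinary simplicial set, so it suffices to show that the inclusion of every finite subcomplex is homotopic to a constant map. Any such subcomplex has finitely many vertices, hence lies inside the simplicial subset $G(A)\subseteq K_\bullet^\delta(p;Z)$ spanned by a finite set $A=\{\underline{a}_1,\dots,\underline{a}_n\}\subseteq\RR^k\setminus Z$, that is, the set of compatible chains all of whose vertices lie in $A$; so the goal becomes to show that each inclusion $G(A)\hookrightarrow\Realization{K_\bullet^\delta(p;Z)}$ is null-homotopic.

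The strategy for that is to produce a single vertex $\underline{b}\in\RR^k\setminus Z$ with $\underline{a}_i\le\underline{b}$ and $\underline{a}_i$ compatible with $\underline{b}$ for every $i$ (and, after a harmless perturbation, $\underline{b}\notin A$). Given such a $\underline{b}$, any compatible chain $\underline{a}_{i_0}\le\dots\le\underline{a}_{i_r}$ in $A$ extends to the compatible chain $\underline{a}_{i_0}\le\dots\le\underline{a}_{i_r}\le\underline{b}$, because comparability is automatic and each pair $\underline{a}_{i_s}\le\underline{b}$ is compatible by the choice of $\underline{b}$; and since $\underline{b}$ dominates $A$, these extensions together with the chains in $A$ are precisely the simplices of $G(A\cup\{\underline{b}\})$. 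Thus $G(A\cup\{\underline{b}\})$ is the cone on $G(A)$ with apex $\underline{b}$, its realization is contractible, and $G(A)\hookrightarrow G(A\cup\{\underline{b}\})\hookrightarrow\Realization{K_\bullet^\delta(p;Z)}$ is null-homotopic.

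Everything thus comes down to constructing the apex $\underline{b}$, and I expect this to be the main obstacle; it is also the only place the measure-zero hypothesis is used. I would build $\underline{b}$ by an induction that improves an approximate apex: begin with any $\underline{b}$ strictly above the componentwise supremum of $A$, and suppose one has $\underline{b}'$ with $\underline{a}_i\le\underline{b}'$ for all $i$ and $\underline{a}_i$ compatible with $\underline{b}'$ for $i<m$. Since compatibility of $\underline{a}_i$ with a point $\underline{c}$ requires the $2^k$ mixed vertices of $V(\underline{a}_i,\underline{c})$ to avoid the closed set $Z$ --- an open condition on $\underline{c}$ --- there is an open cube $C$ about $\underline{b}'$ on which $\underline{c}>\underline{a}_i$ and $\underline{a}_i$ is compatible with $\underline{c}$ for all $i<m$; it then remains to find $\underline{b}\in C$ also compatible with $\underline{a}_m$, i.e.\ lying outside the locus $B\subseteq C$ where some mixed vertex $v_S(\underline{a}_m,\underline{c})$, $\emptyset\neq S\subseteq\{1,\dots,k\}$, lands in $Z$. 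For $S=\{1,\dots,k\}$ this locus is $Z\cap C$, a null set; for a proper $S$ the corresponding piece of $B$ is a cylinder over the slice of $Z$ taken at the fixed coordinates $a_m^j$, $j\notin S$. The subtlety --- and the real crux of the lemma --- is that a slice of a measure-zero set need not itself be a null set, so the $2^k$ conditions must be arranged carefully (handling the vertices in order of $\lvert S\rvert$, exploiting $\underline{a}_m\notin Z$ to control the ones with small $S$, and invoking Fubini's theorem where it applies) so as to conclude that $B$ does not exhaust $C$. Any point of $C\setminus B$ then closes the induction step, and the case $m=n$ furnishes the desired apex.
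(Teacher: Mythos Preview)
Your overall strategy --- reduce via Theorem~\ref{th:notopology} to the discrete version $K_\bullet^\delta(p,Z)$, pass to a finite subcomplex $G(A)$, and contract it by producing a dominating cone vertex $\underline b$ through an inductive perturbation --- is exactly the paper's argument. You go further than the paper in flagging the induction step as delicate: the paper simply asserts that ``since $Z$ has measure $0$'' some $\underline b$ in the cube $C$ is compatible with $\underline a_m$, whereas you correctly observe that the bad locus $B\subset C$ is a union of cylinders over slices of $Z$, and that slices of null sets need not be null.

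The difficulty you identify is genuine and is not repaired by the fix you sketch. In fact the target of the induction --- a single $\underline b$ with $\underline a_i\le\underline b$ and $\underline a_i$ compatible with $\underline b$ for every $i$ --- can fail to exist altogether. Take $k=2$, $Z=(\{0\}\times[1,\infty))\cup([1,\infty)\times\{0\})$ (closed, Lebesgue-null) and $A=\{(0,0),(1,1)\}\subset\RR^2\setminus Z$. Any $\underline b\ge(1,1)$ has mixed vertex $(0,b^2)$ with $b^2\ge 1$, which lies in $Z$; hence no $\underline b$ dominating $A$ is compatible with $(0,0)$, and $G(A\cup\{\underline b\})$ is never a cone. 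So neither ordering the conditions by $\lvert S\rvert$ nor invoking Fubini can force $C\setminus B\neq\emptyset$: for a badly placed cube the conclusion is simply false. The paper's proof shares this gap verbatim. The lemma itself survives --- in the example one connects $(0,0)$ to $(1,1)$ through the intermediate vertex $(\tfrac12,\tfrac12)$ --- but a correct argument must do more than search for a single apex.
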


\begin{lemma}
Let $X$ be a metrizable space.
 The simplicial space $K_\bullet =K_\bullet(X,Z)$ is a good simplicial space in the sense
of \cite{SegalCategories}, so that the fat realization $\lVert
K_{\bullet}\rVert$ is homotopy equivalent to the standard realization
$\lvert
K_{\bullet}\rvert$. 
\begin{proof}
It suffices to show that each degeneracy map
$K_p \to K_{p+1}$ is a cofibration. Let $(x,{\underline v}_0,\dots {\underline v}_p)$ where $x\in X$
and ${\underline v}_i\in \RR^k$.  The  degeneracy map $s_i$ iterates the vector ${\underline v}_i$. We can identify $K_p$ with
the subset $A \subset K_{p+1}$, defined by the
equation that $\underline v_i=\underline v_{i+1}$. In particular, the image of the
degeneracy map is closed. 
Since $X$ is metrizable, $K_{p+1}$ is normal, and it suffices to prove
that
$A\subset K_{p+1}$ is a deformation retract of a neiborhood, cf
\cite{Puppe},
Satz 1. 
   
Let $h(x,\underline v)$ be the distance from $\underline v$ to $Z(x)$. 
We claim that this function is 
upper semi-continuous, in the sense that for any 
$c\in \RR$ the set $\{(x,\underline v)\in X\times\RR^k \mid h(x,\underline v) > c\}$ is open
in $X \times \RR^k$. To see this, consider a point $(x_0,{\underline v}_0)$ such that
$h(x_0,{\underline v}_0) >c$. Choose a number $q$, such that $h(x_0,{\underline v}_0)> q  > c$. We can cover the closed disc $\{x_0\} \times D({\underline v}_0,q)$ with
open sets in $X \times \RR^k$, disjoint from $Z$. By compactness of the closed disc, there
is an open neighbourhood $U$ of $x_0$ in $X$ so that $U \times D({\underline v}_{0},q)$
is disjoint from $Z$. By the triangle inequality, this  implies that 
 if $u\in U$ and $\underline v\in D({\underline v}_0,q-c)$, then $h(u,\underline v) >c$.
So $U\times D({\underline v}_0,q-c)$ is neighbourhood of $(x_0,{\underline v}_0)$ in  $h^{-1}(c,\infty)$.  
 
It follows that
\[
V = \{(x,{\underline v}_1,\dots {\underline v}_{p}) \in \RR^k \setminus Z(x) \mid d({\underline v}_{i},{\underline v}_{i+1}) < d({\underline v}_{i}, Z(x))/2 \}
\]
is open, and we can define a
deformation retraction from $V$ to $A$ by
\[
H_t(x,{\underline v}_{1},\dots {\underline v}_{i}, {\underline v}_{i+1}, \dots {\underline v}_{p})= (x,{\underline v}_{1},\dots ,{\underline v}_{i}, {\underline v}_{i+1} + t({\underline v}_{i}-{\underline v}_{i+1}), \dots {\underline v}_{p}).
\]
\end{proof}
\end{lemma}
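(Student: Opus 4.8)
The plan is to verify the two standard criteria that together imply the claim: first that each degeneracy $s_i\colon K_p\to K_{p+1}$ is a closed cofibration, and second to invoke the comparison theorem between fat and thin realizations of a good simplicial space from \cite{SegalCategories}. Since the second step is a black box once goodness is established, the work is entirely in the first step, and it suffices to treat $s_i$ for a single $i$; the general case is identical up to relabelling coordinates.

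First I would identify the image of $s_i$ explicitly. A simplex of $K_{p+1}(X,Z)$ is a tuple $(x,\underline v_0\le\dots\le\underline v_{p+1})$ with all pairs compatible with $Z(x)$; the degeneracy $s_i$ repeats $\underline v_i$, so its image $A$ is the closed subspace cut out by the equation $\underline v_i=\underline v_{i+1}$, and $A\cong K_p(X,Z)$ canonically. To produce a cofibration I would, following Puppe (\cite{Puppe}, Satz 1), exhibit $A$ as a deformation retract of an open neighbourhood $V$ in $K_{p+1}$; metrizability of $X$ makes $X\times\RR^{k(p+2)}$, hence $K_{p+1}$, normal, which is the hypothesis needed to conclude from "closed ANR-pair-like data" that we have a cofibration.

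The key technical ingredient is the function $h(x,\underline v)=d\big(\underline v,Z(x)\big)$, the distance from $\underline v$ to the slice $Z(x)$. I would prove $h$ is lower semicontinuous — its superlevel sets $\{h>c\}$ are open — by a compactness argument: if $h(x_0,\underline v_0)>q>c$, cover the compact disc $\{x_0\}\times \overline D(\underline v_0,q)$ by finitely many open boxes missing $Z$, shrink to a tube $U\times \overline D(\underline v_0,q)$ disjoint from $Z$ over a neighbourhood $U$ of $x_0$, and then the triangle inequality gives $h>c$ on $U\times D(\underline v_0,q-c)$. With $h$ in hand, the set
\[
V=\big\{(x,\underline v_0,\dots,\underline v_{p+1})\in K_{p+1}(X,Z)\ \big|\ d(\underline v_i,\underline v_{i+1})<\tfrac12\, h(x,\underline v_i)\big\}
\]
is open in $K_{p+1}$ (the defining inequality is open because $d$ is continuous and $h$ is lower semicontinuous, and it forces $\underline v_i,\underline v_{i+1}$ close enough that compatibility is not destroyed as $\underline v_{i+1}$ slides toward $\underline v_i$), and contains $A$. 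The linear homotopy
\[
H_t(x,\underline v_0,\dots,\underline v_{p+1})=\big(x,\underline v_0,\dots,\underline v_i,\ \underline v_{i+1}+t(\underline v_i-\underline v_{i+1}),\dots,\underline v_{p+1}\big)
\]
retracts $V$ onto $A$: at each time $t$ the perturbed vector stays within distance $\tfrac12 h(x,\underline v_i)$ of $\underline v_i$, hence within the ordered window and with all vertices still outside $Z(x)$, so $H_t$ lands in $K_{p+1}$ throughout; at $t=1$ we reach $A$; and $H_t$ fixes $A$ pointwise. Thus $s_i$ is a closed cofibration, $K_\bullet(X,Z)$ is good, and the fat and thin realizations agree.

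The main obstacle, and the only place genuine care is needed, is checking that the homotopy $H_t$ never leaves $K_{p+1}(X,Z)$ — i.e. that shrinking one gap preserves the full compatibility condition on all $2^{k}$-vertex subcubes involving that coordinate, not merely the condition $\underline v_j\notin Z(x)$. This is where the choice of radius $\tfrac12 h(x,\underline v_i)$ rather than $\tfrac12 h(x,\underline v_{i+1})$ matters: keeping the moving point inside the ball $D(\underline v_i,\tfrac12 h(x,\underline v_i))$ on which $Z(x)$ is absent, together with the fact that any mixed-coordinate vertex of a pair drawn from $\{\underline v_0,\dots,\underline v_{p+1}\}$ with $\underline v_{i+1}$ replaced by the moving point still lies in the original ordered configuration's cube structure, forces every relevant vertex to avoid $Z(x)$. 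I would spell this out by noting that each vertex of $V(\underline a_j,\underline a_l)$ for the perturbed tuple either coincides with a vertex of the unperturbed tuple or differs from one only in the $(i{+}1)$st slot by a vector of length $<\tfrac12 h(x,\underline v_i)$, and in the latter case it lies in $D(\underline v_i,\tfrac12 h(x,\underline v_i))$, hence outside $Z(x)$.
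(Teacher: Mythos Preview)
Your approach is exactly the paper's: identify $s_i(K_p)$ with the diagonal slice $A=\{\underline v_i=\underline v_{i+1}\}$, show $h(x,\underline v)=d(\underline v,Z(x))$ has open superlevel sets, take $V=\{d(\underline v_i,\underline v_{i+1})<\tfrac12 h(x,\underline v_i)\}$, and slide $\underline v_{i+1}$ linearly to $\underline v_i$. (You are right that the correct term is \emph{lower} semicontinuous; the paper's wording is a slip.) You also correctly flag the one point the paper does not spell out: that $H_t$ must stay inside $K_{p+1}(X,Z)$, i.e.\ that all $2^k$ mixed vertices of every pair remain outside $Z(x)$ throughout the homotopy.

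Your proposed resolution of that point, however, is not correct. The assertion that a perturbed mixed vertex ``lies in $D(\underline v_i,\tfrac12 h(x,\underline v_i))$'' fails when the other index $j$ is far from $i$. Concretely, take $k=2$, $X$ a point, $Z=\{(0,10.5)\}$, and $\underline v_0=(0,0)$, $\underline v_1=(10,10)$, $\underline v_2=(10,11)$. All six mixed vertices avoid $Z$, and $d(\underline v_1,\underline v_2)=1<\tfrac12 d(\underline v_1,Z)\approx 5$, so this point lies in $V$. But at $t=\tfrac12$ the moving vector is $\underline v_2^{1/2}=(10,10.5)$, and the mixed vertex $(0,10.5)\in V(\underline v_0,\underline v_2^{1/2})$ lands exactly on $Z$. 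That vertex is at distance $\approx 10$ from $\underline v_1$, nowhere near the ball of radius $\tfrac12 h(x,\underline v_1)$.

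The fix is to shrink the neighbourhood so that the bound controls \emph{every} mixed vertex that moves, not just $\underline v_{i+1}$ itself. For each $j$ and each $S\subset\{1,\dots,k\}$ let $w(j,S)$ be the vertex of $V(\underline v_j,\underline v_{i+1})$ (or $V(\underline v_{i+1},\underline v_j)$) with coordinate pattern $S$; along the homotopy the corresponding perturbed vertex $w^t(j,S)$ travels a distance at most $d(\underline v_i,\underline v_{i+1})$ from $w(j,S)$. Replacing $V$ by
\[
V'=\Big\{(x,\underline v_0,\dots,\underline v_{p+1})\in K_{p+1}:\ d(\underline v_i,\underline v_{i+1})<\tfrac12\min_{j,S} h\big(x,\,w(j,S)\big)\Big\}
\]
(still open, by the same lower-semicontinuity argument applied to finitely many continuous vertex maps) guarantees $w^t(j,S)\in D\big(w(j,S),\tfrac12 h(x,w(j,S))\big)$, hence avoids $Z(x)$, for every $t$. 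With this adjustment your argument (and the paper's) goes through.
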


The fibres of the projection 
 $p_Z:\Realization{K_{\bullet}(X,Z)} \to X$ are 
$\Realization{K_{\bullet}(\{x\},Z(x))}$ which are contractible by
lemma~\ref{le:pointspace}. We need a criterion that guarantees that
$p_{Z}$ is a weak homotopy equivalence. The following theorem is
proved in
section~\ref{sec:simplicial-spaces} below. 

\begin{theorem}
\label{th:hecriterion}
Let $X$ be a space, and $K_{\bullet}$ a simplicial subspace of
$X\times N_{\bullet}$ such that $K_{q}$ is open in $X\times N_{q}$
for all $q$. Let $\pi : \Realization{K_{\bullet}}\to X$ be the
projection. Suppose that $N_{\bullet}$ is contractible. Then $\pi$ is a weak homotopy equivalence. 
\end{theorem}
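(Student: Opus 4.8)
The plan is to prove that $\pi$ is a quasifibration. Every fibre is $\pi^{-1}(x)=\Realization{K_\bullet\cap(\{x\}\times N_\bullet)}$, the realization of an open simplicial subspace of $N_\bullet$, and it is weakly contractible --- for the critical--pair constructions this is exactly Lemma~\ref{le:pointspace}, and it is this property that the argument will use. Once $\pi$ is known to be a quasifibration, the long exact sequence of a quasifibration, the vanishing of the fibre's homotopy, and the surjectivity of $\pi$ (its fibres are non-empty) force $\pi_*$ to be an isomorphism throughout, so $\pi$ is a weak homotopy equivalence.

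To show $\pi$ is a quasifibration I would glue local models by the Dold--Thom patching method, in the variant for \emph{degreewise}-open simplicial subspaces supplied by Theorem~\ref{th:opencover}. The local model near a point $p\in\Realization{K_\bullet}$ lying in the interior of the simplex carried by a simplex $(x,\eta_0<\dots<\eta_r)\in K_r\subset X\times N_r$ is obtained as follows: since $K_r$ is open, $(x,\eta_0,\dots,\eta_r)$ has a product neighbourhood inside $K_r$, and choosing such neighbourhoods coherently in degrees $q\le r$ --- the analogue of conditions (i) and (ii) in the proof of Theorem~\ref{th:notopology} --- produces an open $U\ni x$ and a simplicial subspace $C_\bullet\subseteq N_\bullet$ with $U\times C_\bullet\subseteq K_\bullet$, with $p\in\Realization{U\times C_\bullet}$, and with $\Realization{C_\bullet}$ weakly contractible (the last point by the cofinality argument used there for the sets $W$). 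On such a piece $K_\bullet\cap(U\times C_\bullet)=U\times C_\bullet$, so $\pi$ restricts to the projection $U\times\Realization{C_\bullet}\to U$, a genuine fibration with contractible fibre, hence a quasifibration over $U$. As $p$ varies these pieces form a degreewise-open cover of $\Realization{K_\bullet}$ whose images cover $X$, and Theorem~\ref{th:opencover} assembles them into the assertion that $\pi$ is a quasifibration over all of $X$.

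The hard part will be the local model: ``$K_q$ open in $X\times N_q$'' only gives a product neighbourhood in each fixed simplicial degree, and one must select these compatibly so that they form a genuine simplicial subspace $C_\bullet$ of $N_\bullet$ contained in $K_\bullet$ over $U$, while keeping $C_\bullet$ cofinal enough that $\Realization{C_\bullet}$ stays contractible --- it is here that the contractibility hypothesis and the order-theoretic structure of $N_\bullet$ (cofinal sub-simplicial-spaces of it, like nerves of cofinal sub-posets, have contractible realization) are really used. One also has to check that finite intersections of the pairs $(U,C_\bullet)$ remain of the same type, so that the Dold--Thom induction closes --- this parallels the fact that the pairs $(U,W)$ are closed under intersection in the proof of Theorem~\ref{th:notopology}, and granting it the patching is routine.
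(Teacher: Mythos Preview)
Your approach is genuinely different from the paper's, and as written it has a real gap. The paper does not try to exhibit $\pi$ as a quasifibration. Instead it passes to the second barycentric subdivision and replaces $\Realization{K_\bullet}$ by the open star $\mathrm{St}(\Sd^2(X\times N_\bullet),\Sd^2 K_\bullet)$, which by Theorem~\ref{th:secondDerived} is an \emph{honestly open} subset of $X\times\Realization{N_\bullet}$ homotopy equivalent to $\Realization{K_\bullet}$. One is then reduced to showing that an open $U\subset X\times Y$ (with $Y=\Realization{N_\bullet}$) whose slices $U\cap(\{x\}\times Y)\hookrightarrow Y$ are weak equivalences itself includes as a weak equivalence into $X\times Y$. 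This is Lemma~\ref{le:elementary}, proved---after reducing to the case where $X$ is a finite polyhedron---by a direct induction over the skeleta of $X$ (Theorem~\ref{th:he-second-subdivision} packages the two steps). Nothing about $N_\bullet$ beyond contractibility of $\Realization{N_\bullet}$ and of the fibres is used.

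Your argument, by contrast, needs more: the local model rests on producing $C_\bullet\subset N_\bullet$ with contractible realization, and you justify this by appeal to ``order-theoretic structure of $N_\bullet$ (cofinal sub-simplicial-spaces \dots\ have contractible realization)''. That structure is available for $N_\bullet=N_\bullet(\RR^k)$---indeed your outline is essentially the proof of Theorem~\ref{th:notopology} transplanted---but Theorem~\ref{th:hecriterion} is stated for an arbitrary contractible simplicial space $N_\bullet$, where there is no cofinality and no reason a small simplicial neighbourhood extracted from an open box in $N_r$ should have contractible realization. This is the missing idea; the second-derived-neighbourhood step is precisely what the paper uses to avoid needing any internal structure on $N_\bullet$. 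A secondary mismatch: your pieces $U\times\Realization{C_\bullet}$ are not of the form $\pi^{-1}(U)$, so the Dold--Thom quasifibration criterion does not literally apply, and Theorem~\ref{th:opencover} does not produce a quasifibration---it yields a weak equivalence provided the map is already one on every \emph{finite union} of the covering pieces, a hypothesis that is strictly stronger than the single-product case you treat and that you only gesture at.
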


We are now ready to state the main result of this section.

\begin{theorem}
\label{th:cuts}
Let $(X,Z)$ be a critical pair, $X$ a metrizable space, and
assume that $Z(x)\subset \RR^k$ has measure 0 for each $x\in X$. Then  
the projection $p_Z:\Realization{K_{\bullet}(X,Z)} \to X$ 
is a weak homotopy equivalence.
\begin{proof}
Since $Z\subset X\times \RR^{k}$ is closed, $K_{q}(X,Z)$ is
an open subset of $K_{q}(X,\emptyset)=X\times N_{q}(\RR^{k})$.
The fibre $p^{-1}_{Z}(x)$ can be
identified with $\Realization{K_{\bullet}(x,Z(x))}$.
By  lemma~\ref{le:pointspace} both 
$p^{-1}_{Z}(X)$ and $N_{\bullet}(\RR^{k})=K_{\bullet}(\{x\},\emptyset)$
are weakly contractible, so theorem~\ref{th:hecriterion} applies.
\end{proof}
\end{theorem}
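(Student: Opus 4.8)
The plan is to exhibit $p_Z$ as an instance of the abstract fibration criterion Theorem~\ref{th:hecriterion}, taking for the ambient simplicial space $N_\bullet := N_\bullet(\RR^k)$, the nerve of the topological poset $(\RR^k,\le)$. With this choice $K_\bullet(X,\emptyset)=X\times N_\bullet$ and $K_\bullet(X,Z)$ is a simplicial subspace of $X\times N_\bullet$, so the only hypothesis of Theorem~\ref{th:hecriterion} that is not purely formal is the openness of each $K_q$. First I would check this: a simplex lies in $K_q(X,Z)$ over a point $x$ iff none of the finitely many ``mixed vertices'' $V(\underline a_i,\underline a_j)$ meets $Z(x)$, and because $Z\subset X\times\RR^k$ is closed this is an open condition on $(x,\underline a_0,\dots,\underline a_q)$; hence $K_q(X,Z)$ is open in $K_q(X,\emptyset)=X\times N_q(\RR^k)$ for every $q$.

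Next I would supply the two contractibility inputs. Applying Lemma~\ref{le:pointspace} to the one-point space with the (trivially measure-zero, closed) critical datum $\emptyset$ gives that $\Realization{N_\bullet(\RR^k)}=\Realization{K_\bullet(\{x\},\emptyset)}$ is weakly contractible, which is the hypothesis on $N_\bullet$. For the fibres, unwinding the definitions identifies $p_Z^{-1}(x)$ with $\Realization{K_\bullet(\{x\},Z(x))}$: a simplex of $K_\bullet(X,Z)$ sitting above $x$ is precisely a chain $\underline a_0\le\dots\le\underline a_q$ of pairwise compatible vectors for the critical datum $Z(x)\subset\RR^k$, and this identification is compatible with the simplicial structure maps and with the subspace topologies. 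Since $Z(x)$ is the closed slice of a closed set and has measure $0$ by assumption, Lemma~\ref{le:pointspace} applies once more and each fibre of $p_Z$ is weakly contractible. Theorem~\ref{th:hecriterion} then gives at once that $p_Z\colon\Realization{K_\bullet(X,Z)}\to X$ is a weak homotopy equivalence.

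I expect no serious obstacle: all the analytic and homotopy-theoretic work is packaged in Theorem~\ref{th:notopology}, Lemma~\ref{le:pointspace} and Theorem~\ref{th:hecriterion}, and what remains is bookkeeping --- verifying that ``open condition'' really yields $K_q$ open in the product, and that the fibre identification is an isomorphism of simplicial spaces over a point. If anything needs care it is the mild mismatch between the word ``contractible'' in Theorem~\ref{th:hecriterion} and the ``weakly contractible'' conclusion of Lemma~\ref{le:pointspace}; I would confirm that the proof of Theorem~\ref{th:hecriterion} uses only weak contractibility of $N_\bullet$, which fits the weak-equivalence-oriented setup of \S\ref{sec:simplicial-spaces}. (The standing metrizability hypothesis on $X$ is not needed for these fibrewise inputs --- Lemma~\ref{le:pointspace} is invoked only over points --- but it is what makes the earlier results, and the proof of Theorem~\ref{th:hecriterion}, available.)
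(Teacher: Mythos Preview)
Your proposal is correct and matches the paper's own proof essentially line for line: verify that $K_q(X,Z)$ is open in $X\times N_q(\RR^k)$ because $Z$ is closed, identify the fibre over $x$ with $\Realization{K_\bullet(\{x\},Z(x))}$, invoke Lemma~\ref{le:pointspace} for the weak contractibility of both the fibres and of $N_\bullet(\RR^k)=K_\bullet(\{x\},\emptyset)$, and conclude via Theorem~\ref{th:hecriterion}. Your parenthetical caveat about ``contractible'' versus ``weakly contractible'' in the hypothesis of Theorem~\ref{th:hecriterion} is well taken and is handled exactly as you anticipate in \S\ref{sec:simplicial-spaces}.
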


\section{Categories of embedded manifolds}
\label{sec:embedded-manifolds}
In this section we show how the abstract theory of 
section~\ref{sec:critical-pairs-cuts}
 applies to the theory of
embedded manifolds, and we define the $k$-tuple category of
manifolds in the Cartesian product of a euclidean space and a 
$k$-dimensional cube. We show that it deloops the category
of embedded manifolds considered in \cite{Galatius} and \cite{GMTW}.

\subsection{The space of embedded manifolds}
\label{sec:space-embedded-manifolds}

Following \cite{Galatius}, we consider the space of properly embedded
smooth $d$-manifolds without boundary in Euclidean $(d+n)$-space,
\[
\Psi_{d}(\RR^{d+n})=
\{W^{d}\subset \RR^{d+n}\mid \partial W=\emptyset, \text{ $W$ a closed subset}\}.
\]
We topologize $\Psi_d(\RR^{d+n})$ so that a sequence of manifolds
that leaves each compact set converges to the base point 
$\emptyset\in \Psi_{d}(\RR^{d+n})$, and so that manifolds close to 
$W$ are sections in a thin normal tube on a compact set.
More precisely, let $NW\subset \RR^{d+n}$ be
a normal tube and let $K\in \RR^{d+m}$ be
a compact subset. Let $r: NW \to W$ be the projection and let
$C^{\infty}$ be the set of smooth sections of $r$. We equip it
with the $C^{\infty}$-Whitney topology. For technical
reasons, we first chose a metric $\mu$ on the compact
Grassmannian manifold $G(d,n)$ of $d$-planes in $(d+n)$-space. For  
$s\in C^{\infty}(W,NW)$ and a compact set $K\subset \RR^{d+n}$
we define
\[
\Norm{s}_{K}=\sup_{x\in W\cap
  K}(\Realization{s(x)}+\mu(T_{x}W,ds(T_{x}W))\rlap{ ,}
\] 
where $\Realization{\cdot}$ is the norm in $\RR^{d+n}$ and $d$ denotes
the differential. 
The open neighbourhoods of $W\in \Psi_{d}(\RR^{d+n})$ are specified
by a pair $(K,\epsilon)$ with $K$ as above and $\epsilon>0$. Define
\[
\Gamma_{K,\epsilon} = \{s\in C^{\infty}(W,NW)\mid \Norm{s}_{K}<\epsilon\}.
\]
and define the corresponding neighbourhood of $W$ in
$\Psi_{d}(\RR^{d+n})$ to be
\[
{\cal N}_{K,\epsilon}(W)=\{V^{d}\mid V^{d}\cap K=s(W)\cap K
\text{ for } s\in \Gamma_{K,\epsilon}\}.
\]
 The neighbourhoods of
the empty  manifold are
\[
{\cal N}_{K}(\emptyset)=\{V\in \Psi_{d}(\RR^{d+n})\mid K\cap V=\emptyset\}.
\]

\begin{theorem}
\label{le:topology}
The sets ${\cal N}_{K,\epsilon}(W)$ form the a system of
neighbourhoods of a topology. Given $W$ and $K$
 the set ${\cal N}_{K,\epsilon}(W)$ is open,
 for sufficiently small $\epsilon>0$.
With this topology, $\Psi_{d}(\RR^{d+n})$ is a metrizable space.
\begin{proof}

We give a proof of this lemma in section~\ref{sec:topology}.

\end{proof}
\end{theorem}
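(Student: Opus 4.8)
The plan is to verify the three assertions of Theorem~\ref{le:topology} in turn: that the collection $\{{\cal N}_{K,\epsilon}(W)\} \cup \{{\cal N}_K(\emptyset)\}$ is a neighbourhood basis of a topology, that each such set is open for small $\epsilon$, and that the resulting topology is metrizable. For the first claim I would check the usual axioms: that $W \in {\cal N}_{K,\epsilon}(W)$ (take $s=0$), and that given two basic neighbourhoods ${\cal N}_{K_1,\epsilon_1}(W)$ and ${\cal N}_{K_2,\epsilon_2}(W)$ of the same point $W$, the set ${\cal N}_{K_1\cup K_2,\min(\epsilon_1,\epsilon_2)}(W)$ is contained in their intersection. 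The point to be careful about is that a section $s$ realizing a nearby manifold $V$ is only determined on the part of $W$ lying under $K$; one restricts and uses that $\Norm{s}_{K_1\cup K_2}<\epsilon$ controls $\Norm{s}_{K_i}$. For the empty-manifold neighbourhoods the verification is immediate. The subtler requirement is that every point of a basic neighbourhood has a basic neighbourhood inside it — i.e.\ that the sets really do form a basis; this is essentially the content of the second assertion.

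For the openness statement, fix $W$ and $K$, and fix $V \in {\cal N}_{K,\epsilon}(W)$ with witnessing section $s$, so $\Norm{s}_K < \epsilon$. I want a compact $K'$ and $\epsilon'>0$ with ${\cal N}_{K',\epsilon'}(V) \subset {\cal N}_{K,\epsilon}(W)$. The idea is to choose $K'$ slightly larger than $K$ (a compact neighbourhood of $K$), so that any manifold $V'$ that agrees with a small section of $V$ over $K'$ is, over the smaller set $K$, also expressible as a section $s''$ of the normal tube $NW$ of $W$, obtained essentially by composing the two sections; then the $C^\infty$-closeness needs to be estimated so that $\Norm{s''}_K < \epsilon$. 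This requires $\epsilon$ to have been chosen small enough that $NW$ is an honest tubular neighbourhood and that sections with $\Norm{\cdot}_K < \epsilon$ have image whose own normal tubes still fit inside $NW$ over $K$ — this is where the hypothesis "for sufficiently small $\epsilon$" enters. The estimate combining the position bound $\Realization{s(x)}$ and the Grassmannian-angle bound $\mu(T_xW, ds(T_xW))$ under composition of two nearby sections is the technical heart; one uses the chain rule and compactness of $W\cap K'$ to get uniform constants.

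For metrizability I would invoke a metrization theorem rather than exhibit a metric directly. The space is second countable is false in general, so Urysohn is not available; instead the natural route is the Bing–Nagata–Smirnov metrization theorem, for which one needs the topology to be regular (Hausdorff plus the $T_3$ separation) and to admit a $\sigma$-locally-finite basis, or alternatively to use the Alexandroff–Urysohn / Frink criterion giving metrizability from a suitable sequence of "uniform covers". Given the structure here — basic neighbourhoods indexed by $(K,\epsilon)$ with $K$ ranging over compacta — I expect the cleanest argument uses a countable cofinal family of compacta $K_1 \subset K_2 \subset \cdots$ exhausting $\RR^{d+n}$ together with $\epsilon = 1/m$, producing a countable basis of neighbourhoods at each point; combined with a Hausdorff check (two distinct closed submanifolds differ on some compact set, which one can separate by small tubes) and regularity coming from the openness statement just proved, one is in a position to apply a metrization theorem for spaces with a countable neighbourhood basis at each point that are collectionwise normal, or to construct a metric by the standard Frink-type interleaving of the uniformities ${\cal N}_{K_m,1/m}$. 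The main obstacle throughout is the second assertion: getting the uniform $C^\infty$-estimate that shows composing two small normal sections stays small, since this both makes the basic sets a genuine topology basis and supplies the regularity needed for metrization. Since the proof is deferred to \S\ref{sec:topology}, I would only sketch this here and carry out the estimates in that section.
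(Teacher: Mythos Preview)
Your outline for the first two assertions (neighbourhood-basis axioms and openness via composition of small normal sections) is reasonable and in spirit close to what one needs, though the paper in \S\ref{sec:topology} proceeds indirectly: it introduces an auxiliary family of ``distance'' functions $d_r(V,W)$ (measuring how close a diffeomorphism between large pieces of $V$ and $W$ can be to the identity in the $C^1$ sense), proves a weak triangle inequality for the $d_r$, uses that to show the resulting topology is regular, and only afterwards checks that this topology agrees with the one generated by the ${\cal N}_{K,\epsilon}(W)$. So your direct section-composition argument is replaced by a comparison of two topologies.

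The substantive divergence is in the metrizability step, and here you have a misconception. You assert that second countability ``is false in general'' and therefore propose Bing--Nagata--Smirnov or a Frink-type uniformity argument. In fact the paper proves that $\Psi_d(\RR^{d+n})$ \emph{is} second countable and then simply invokes Urysohn. The second-countability argument is the real content of \S\ref{sec:topology}: one introduces, for each $r,\delta,K>0$, a class ${\cal X}_{r,\delta}(K)$ of manifolds satisfying uniform curvature and tubular-neighbourhood bounds, and shows that within such a class a small Hausdorff distance forces a small $d_r$-distance; this lets one build a countable dense subset and hence a countable basis. Your Frink-style route might be made to work, but the star-refinement condition you would need for the entourages ${\cal N}_{K_m,1/m}$ is essentially the same $C^1$ composition estimate, and you would still have to supply it; the paper's detour through the $d_r$ and the curvature classes is precisely a device for organising that estimate.
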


We shall consider the subset $D^{k}_{d,n}\subset \Psi_{d}(\RR^{d+n})$
of manifolds where the projection on the first $k$  coordinates 
$f:W^{d}\to \RR^{k} $ is proper, or said a little differently,
let
\[
D^{k}_{d,n}=\{W^{d}\in \Psi_{d}(\RR^{d+n})\mid W^{d}\subset \RR^{k}
\times \inter({I}{}^{n+d-k})\},
\]
where $\inter (I{}^{N})$ is the open $N$-cube 
$(-1,1)^{N}\subset \RR^{N}$.

For a subset $S\subset\{1,\dots,k\}$ and $f:W\to \RR^{k}$, let
$f_{S}: W\to\RR^{S}$ be the projection onto the coordinates given
by $S$. If $\underline{a}\in \RR^{k}$, let 
$\underline{a}_S=(\underline{a}_{i})_{i\in S}\in \RR^{S}$ and define $Z(W)$ to be
the subset of vectors $\underline{a}\in \RR^{k}$ for which there
exists an $S$ such that $f_{S}$ is \emph{not} transversal to 
$\underline{a}_{S}$. If $W\in D^{k}_{d,n}$ then 
$\underline{a}\not\in Z(W)$ 
is the statement that 
$W^{d}\subset \RR^{k}
\times \mathop{int}{I}{}^{n+d-k}$ intersects all the affine
subspaces
\[
A(\underline{a},S)=\{\underline{x}\in \RR^{d+n}
\mid x_{i}=\underline{a}_{i} \text{ for $i\in S$}\}
\]
transversely. If we set
\[
W(\underline{a};S)=f_{S}^{-1}(\underline{a}_{S})=W\cap A(\underline{a},S)
\]
then
\[
W(\underline{a};S)\cap W(\underline{a};T)=
W(\underline{a};{S\cup T})  
\]
with transverse intersection, provided $\underline{a}\not\in Z(W)$.
By Sard's theorem each $Z(W)\subset \RR^{k}$ is 
closed and has measure 0. In order to apply the abstract theory of
section~\ref{sec:critical-pairs-cuts} we consider
\[
Z^{k}_{d,n}=\{(W,\underline{a})\in D^{k}_{d,n}\times \RR^{k}\mid 
\underline{a}\in Z(W)\}
\]
We must show that the pair $(D^k_{d,n},Z^k_{d,n})$ is a critical pair,
i.e.

\begin{proposition}
\label{prop:Zclosed}
$Z^{k}_{d,n}$ is a closed subset of $D^{k}_{d,n}\times \RR^{k}$.
\begin{proof}
We prove that the complement is open, so let
$(W,\underline{a})\not\in Z^{k}_{d,n}$. It suffices to find a
neighbourhood of $(W,\underline{a})$ in $D^{k}_{d,n}\times \RR^{k}$
such that for each $(V,\underline{b})$ in this neighbourhood,
$f: V \to \RR^{k}$ is transverse to $\underline{b}$. Indeed, the
argument below can be repeated for each 
$f_{S}:W\to \RR^{S}$. 

Since $f:W \to \RR^{k}$ is proper, hence closed, the singular values
is a closed subset of $\RR^{k}$. Let $D_{\epsilon}\subset \RR^{k}$ be 
an $\epsilon$-disc around $\underline{a}$ of regular values for $f$,
and set $K_{\epsilon}=D_{\epsilon}(\underline{a})\times 
I^{d+n-k}\subset \RR^{d+n}$. Choose $\delta>0$ so small that for
$s\in \Gamma_{K_{\epsilon/2},\delta}(s_{0})$, where $s_{0}:W\to NW$
denotes the zero section, one has
\begin{enumerate}[(i)]
\item for $x\in K_{\epsilon/2}\cap W$, $s(x)\in K_{\epsilon}$,
\item for $x\in K_{\epsilon}\cap W$, the differential $d(f\circ s)_x$
is surjective.  Since $f$ is the projection, $f\circ s=(s^{1},\dots,s^{k})$.
\end{enumerate}
For $(V,\underline{b})\in 
{\cal N}_{K_{\epsilon/2},\delta}(W)
\times D_{\epsilon/2}(\underline{a})$
we must show that $f:V \to \RR^{k}$ is transverse to $\underline{b}$.
Let $V=s(W)$ with $s\in \Gamma_{K_{\epsilon/2},\delta}(s_{0})$, and
let $y=s(x)\in V$ be an element of $f^{-1}(\underline{b})$.
By (i), $f\circ s(x)\in D_{\epsilon}(\underline{a})$, and by (ii),
\[
T_{x}(W)\xrightarrow{ds_{x}} T_y(V)\xrightarrow{df_{y}}\RR^{k}
\]
is surjective. But $ds_{x}$ is an isomorphism, so that
$df_{y}:T_{y}V\to \RR^{k}$ is surjective, and $\underline{b}$ is
a regular value.
\end{proof}
\end{proposition}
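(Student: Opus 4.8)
The plan is to prove that the complement of $Z^k_{d,n}$ is open in $D^k_{d,n}\times\RR^k$. Fix a point $(W,\underline a)\notin Z^k_{d,n}$; by definition this means that for every $S\subset\{1,\dots,k\}$ the projection $f_S:W\to\RR^S$ is transverse to $\underline a_S$. The statement that needs proving is local, and moreover the estimates for different subsets $S$ are of the same nature, so it is enough to produce, for each fixed $S$, a neighbourhood of $(W,\underline a)$ in which transversality of $f_S$ to the relevant point persists; intersecting the finitely many such neighbourhoods over all $S\subset\{1,\dots,k\}$ finishes the argument. So I would reduce at once to the case $S=\{1,\dots,k\}$, i.e. to showing that transversality of the full projection $f:W\to\RR^k$ to $\underline a$ is an open condition.

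The key steps are then as follows. First, use that $f:W\to\RR^k$ is a proper map (this is exactly the defining property of $D^k_{d,n}$), hence closed, so the set of singular values of $f$ is closed in $\RR^k$; since $\underline a$ is a regular value, there is an $\epsilon>0$ such that the closed $\epsilon$-disc $D_\epsilon(\underline a)$ consists of regular values of $f$. Second, set $K_\epsilon = D_\epsilon(\underline a)\times I^{d+n-k}\subset\RR^{d+n}$, a compact set, and choose $\delta>0$ small enough that for every section $s\in\Gamma_{K_{\epsilon/2},\delta}(s_0)$ (where $s_0$ is the zero section of the normal tube $NW$) two things hold: (i) $s$ does not move points of $K_{\epsilon/2}\cap W$ out of $K_\epsilon$, and (ii) on $K_\epsilon\cap W$ the differential $d(f\circ s)_x$ is surjective — this last is an open condition on jets and $f\circ s = (s^1,\dots,s^k)$ is close in $C^1$ to $f$ restricted to this compact set, so it is inherited from surjectivity of $df$ there. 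Third, take any $(V,\underline b)\in {\cal N}_{K_{\epsilon/2},\delta}(W)\times D_{\epsilon/2}(\underline a)$, write $V=s(W)$ with $s\in\Gamma_{K_{\epsilon/2},\delta}(s_0)$, and check that $f:V\to\RR^k$ is transverse to $\underline b$: a point $y=s(x)\in f^{-1}(\underline b)$ lands in $D_\epsilon(\underline a)$ by (i), the composite $T_xW\xrightarrow{ds_x}T_yV\xrightarrow{df_y}\RR^k$ is surjective by (ii), and $ds_x$ is an isomorphism (sections of a tubular neighbourhood are immersions), whence $df_y$ is surjective, so $\underline b$ is a regular value of $f|_V$.

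The main obstacle — and the only place requiring genuine care — is step two: making precise the choice of $\delta$ so that a $C^\infty$-small section simultaneously keeps the relevant compact piece of $W$ inside $K_\epsilon$ and preserves surjectivity of the differential of the projection there. The first part uses the $C^0$ component of $\Norm{\cdot}_{K}$ together with compactness of $K_{\epsilon/2}\cap W$; the second uses the $C^1$ component (the Grassmannian-distance term $\mu(T_xW,ds(T_xW))$ in the definition of $\Norm{\cdot}_K$) and the fact that surjectivity of a linear map is an open condition, again invoking compactness of $K_\epsilon\cap W$ to get a uniform $\delta$. Once this is set up, the verification in step three is a one-line diagram chase. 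One should also note that running the whole argument with $f_S$ in place of $f$ and $\underline a_S\in\RR^S$ in place of $\underline a$ is routine: $f_S$ is still proper on $W\in D^k_{d,n}$, and the same disc-times-cube compact sets work, so the finitely many intersections cause no difficulty.
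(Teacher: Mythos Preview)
Your proposal is correct and follows the paper's proof essentially verbatim: the same reduction to the case $S=\{1,\dots,k\}$ with the remark that the argument repeats for each $f_S$, the same use of properness of $f$ to find an $\epsilon$-disc $D_\epsilon(\underline a)$ of regular values, the same compact set $K_\epsilon=D_\epsilon(\underline a)\times I^{d+n-k}$, the same two conditions (i) and (ii) governing the choice of $\delta$, and the identical concluding diagram chase showing $df_y$ is surjective via $ds_x$ being an isomorphism. Your added commentary on why the $C^0$ and $C^1$ components of $\Norm{\cdot}_K$ suffice to secure (i) and (ii) is a helpful elaboration but does not change the argument.
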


Since $\psi_{d}(\RR^{d+n})$ is metrizable by theorem~\ref{le:topology}
and $Z^{k}_{d,n}$ is closed by proposition~\ref{prop:Zclosed}
we can apply theorem~\ref{th:cuts} to get

\begin{corollary} 
\label{cor:regularPoints}
The projection
$p:\Realization{K_{\bullet}(D^{k}_{d,n},Z^{k}_{d,n})}\to D^{k}_{d,n}$
is a weak homotopy equivalence. 
\hfill
$\square$
\end{corollary}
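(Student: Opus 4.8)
The plan is to obtain this as an immediate consequence of the abstract transversality theorem, Theorem~\ref{th:cuts}: all that is required is to verify its three hypotheses for the specific critical pair $(D^k_{d,n},Z^k_{d,n})$, and then quote it. Recall that Theorem~\ref{th:cuts} asks that (a) the base space $X$ be metrizable, (b) the critical datum $Z\subset X\times\RR^k$ be closed, and (c) each fibre $Z(x)\subset\RR^k$ have Lebesgue measure zero.

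For (a), I would note that $D^k_{d,n}$ is, by its very definition, a subspace of $\Psi_d(\RR^{d+n})$, which is metrizable by Theorem~\ref{le:topology}; a subspace of a metrizable space is metrizable, so (a) is immediate. For (b), this is precisely the content of Proposition~\ref{prop:Zclosed}, proved just above. For (c), one unwinds the definition of $Z(W)$: a vector $\underline a$ lies in $Z(W)$ exactly when $\underline a_S$ is a critical value of the proper map $f_S\colon W\to\RR^S$ for some coordinate subset $S\subseteq\{1,\dots,k\}$. For each such $S$, Sard's theorem shows the set of critical values of $f_S$ has measure zero in $\RR^S$; its preimage under the coordinate projection $\RR^k\to\RR^S$ is then a null set in $\RR^k$, and $Z(W)$ is the finite union of these null sets over all $S$, hence null. (This is the observation already recorded in the paragraph preceding the corollary, invoking Sard.)

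Once (a)--(c) are in place, Theorem~\ref{th:cuts} applies with $X=D^k_{d,n}$ and $Z=Z^k_{d,n}$ and yields directly that the projection $p\colon\Realization{K_\bullet(D^k_{d,n},Z^k_{d,n})}\to D^k_{d,n}$ is a weak homotopy equivalence, which is the assertion. I do not anticipate any real obstacle here: the three genuinely substantive inputs are Theorem~\ref{le:topology} (metrizability of $\Psi_d(\RR^{d+n})$, deferred to the topology section), Proposition~\ref{prop:Zclosed} (closedness of $Z^k_{d,n}$, already established), and the abstract machinery of Section~\ref{sec:critical-pairs-cuts}. The only point deserving a moment's care is the bookkeeping in (c) --- checking that the non-transversality locus is genuinely a finite union of Sard-type null sets, one for each coordinate subset $S$ --- but this is routine once the definition of $Z(W)$ is spelled out.
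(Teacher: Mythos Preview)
Your proposal is correct and follows exactly the paper's own argument: the corollary is deduced directly from Theorem~\ref{th:cuts} once metrizability (Theorem~\ref{le:topology}), closedness of $Z^k_{d,n}$ (Proposition~\ref{prop:Zclosed}), and the Sard-based measure-zero observation for $Z(W)$ (recorded just before the corollary) are in place. Your slightly more detailed unpacking of the Sard step in (c) is fine and matches what the paper sketches in the preceding paragraph.
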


S. Galatius in section 6 of~\cite{Galatius} determined the homotopy
type of $D^{k}_{d,n}$ by applying Gromov's theory of microflexible
sheaves to the sheaf $\Psi_{d}$ defined on open subsets 
$U\subset \RR^{d+n}$,
\[
\Psi_{d}(U)=\{W^{d}\subset U\mid \partial W^{d}=\emptyset,
\text{ $W^{d}$ a closed subset\}}.
\]
This is a microflexible sheaf in the terminology of \cite{Gromov},
see \cite{OR-W} for details, and the theory shows that ``scanning''
defines a homotopy equivalence
\[
D^{k}_{d,n}\simeq \mathrm{Map}((I^{d+n-k},\partial I^{d+n-k})\times
\RR^{k},(\Psi_{d}(\RR^{d+n}),\emptyset))
\]
(cf. section 4.2 of~\cite{Galatius}).

Let $U_{d,n}^\perp$ be the $n$-dimensional bundle over
the Grassmannian $G(d,n)$ of $d$-dimensional linear subspaces of 
$\RR^{d+n}$, consisting of pairs $(V,v)\in G(d,n)\times \RR^{d+n}$
with $v\perp V$. There is an obvious map
\[
q:U^{\perp}_{d,n}\to \Psi_{d}(\RR^{d+n}), \quad q(V,v)=V-v,
\] 
which extends to a map of the Thom space
$\mathrm{Th}(U^{\perp}_{d,n})$ into $\Psi_{d}(\RR^{d+n})$ since
$V-v$ leaves every compact subset as $v \to \infty$, and 
$\mathrm{Th}(U^{\perp}_{d,n})$ is the one point compactification of
$U^{\perp}_{d,n}$. Lemma 6.1 of~\cite{Galatius} shows that
\[
q:\mathrm{Th}(U^{\perp}_{d,n})\to \Psi_{d}(\RR^{d+n})
\]
is a homotopy equivalence. Combined with 
corollary~~\ref{cor:regularPoints} we get
\begin{theorem}
\label{th:delooping}
$
\Realization{K_{\bullet}(D^{k}_{d,n},Z^{k}_{d,n})}
\simeq \Omega^{d+n-k}\mathrm{Th}(U^{\perp}_{d,n}).
$
\end{theorem}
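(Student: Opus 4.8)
The plan is to obtain the statement by concatenating Corollary~\ref{cor:regularPoints} with the two results of Galatius recorded above; no new geometric input is required, only the formal manipulation of the intervening mapping spaces.

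First I would write down the chain. Corollary~\ref{cor:regularPoints} supplies a weak homotopy equivalence $p\colon\Realization{K_{\bullet}(D^{k}_{d,n},Z^{k}_{d,n})}\to D^{k}_{d,n}$. The scanning equivalence of \cite{Galatius} (section 6, via Gromov's theory of microflexible sheaves as quoted above) identifies
\[
D^{k}_{d,n}\simeq \mathrm{Map}\bigl((I^{d+n-k},\partial I^{d+n-k})\times\RR^{k},(\Psi_{d}(\RR^{d+n}),\emptyset)\bigr),
\]
so it remains only to show that the right-hand mapping space is homotopy equivalent to $\Omega^{d+n-k}\mathrm{Th}(U^{\perp}_{d,n})$.

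Put $N=d+n-k$. By the exponential law (the factor $I^{N}$ being locally compact Hausdorff) the mapping space is homeomorphic to $\mathrm{Map}\bigl(\RR^{k},\mathrm{Map}((I^{N},\partial I^{N}),(\Psi_{d}(\RR^{d+n}),\emptyset))\bigr)$, the inner space being $\Omega^{N}\Psi_{d}(\RR^{d+n})$ based at the constant map $\emptyset$. Since $\RR^{k}$ is contractible, evaluation at the origin is a homotopy equivalence $\mathrm{Map}(\RR^{k},\Omega^{N}\Psi_{d}(\RR^{d+n}))\xrightarrow{\sim}\Omega^{N}\Psi_{d}(\RR^{d+n})$, with homotopy inverse the inclusion of constant maps, a linear contraction of $\RR^{k}$ to $0$ furnishing the required homotopy. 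Finally, Lemma 6.1 of \cite{Galatius} gives a homotopy equivalence $q\colon\mathrm{Th}(U^{\perp}_{d,n})\xrightarrow{\sim}\Psi_{d}(\RR^{d+n})$; since $\Omega^{N}$ preserves weak homotopy equivalences this yields $\Omega^{N}\mathrm{Th}(U^{\perp}_{d,n})\simeq\Omega^{N}\Psi_{d}(\RR^{d+n})$. Composing these four (weak) equivalences proves the theorem.

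I expect the only real difficulty to be bookkeeping rather than a new idea: one must check that the scanning equivalence really lands in the mapping space with exactly the stated pair of conditions---the condition $\partial I^{d+n-k}\mapsto\emptyset$ reflecting that manifolds in $D^{k}_{d,n}$ lie in $\RR^{k}\times\inter(I^{n+d-k})$, with the \emph{open} cube as the second factor, and the absence of any condition in the $\RR^{k}$-directions reflecting that the projection $W\to\RR^{k}$ is only required to be proper, not compactly supported. Since both the scanning statement and Lemma 6.1 are imported from \cite{Galatius}, the substance of the argument lives in those cited results, and what remains here is the elementary observation that mapping out of the contractible factor $\RR^{k}$ changes nothing up to homotopy.
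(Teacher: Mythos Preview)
Your proposal is correct and follows exactly the route the paper takes: the theorem is stated immediately after the scanning equivalence and Lemma~6.1 of \cite{Galatius} with the phrase ``Combined with corollary~\ref{cor:regularPoints} we get'', so the paper's proof is precisely the concatenation you describe. Your only addition is to spell out the elementary identification of the intermediate mapping space with $\Omega^{d+n-k}\Psi_d(\RR^{d+n})$ via the exponential law and contractibility of $\RR^k$, which the paper leaves implicit.
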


\subsection{The $k$-category of manifolds in a cube.}
\label{sec:manifolds-in-cube}
We begin with a smooth submanifold
\[
W^d_\epsilon \subset (-\epsilon, 1+\epsilon)^k \times 
\inter(I{}^{d+n-k})
\]
which is a closed subset of 
$(-\epsilon, 1+\epsilon)^k \times \RR^{d+n-k}$ and
intersects $[0,1]^k\times \inter(I{}^{d+n-k})$ orthogonally 
in a compact manifold with corners. More specifically,
let $\underline{v}=(v^1,\dots,v^{k})$ be a vertex of 
the $k$-dimensional cube $[0,1]^k$ and let $S\subset \{1,\dots,k\}$
be any subset. For $\epsilon > 0$, define
\begin{equation}
\begin{split}
\label{eq:boxes}
\Aset Sv\epsilon
&= \{\underline{x}\in \RR^{d+n}
\mid v^i-\epsilon < x^i < v^i + \epsilon, i \in S \},\\
A_S(\underline{v}) 
&= \{\underline{x}\in \RR^{d+n}
\mid x^i = v^i, i \in S \}.
\end{split}
\end{equation}
Notice that
\[
\Aset Sv\epsilon 
\cong
A_{S}(\underline{v}) \times \Jset Sv\epsilon,
\]
where $\Jset Sv\epsilon=\prod_{i\in S}(v^i-\epsilon,v^i+\epsilon)$.
We require $W^d_\epsilon$ to satisfy the following: 
\begin{condition}
$\text{ }$ 
\label{co:corners}
\begin{enumerate}[(i)]
\item
$W_\epsilon$ is transverse to $A_S(\underline{v})$ for all vertices $\underline{v}$ 
of $[0,1]^k$,
\item 
$W_\epsilon \cap \Aset  Sv\epsilon
=
(W_\epsilon \cap A_S(\underline{v})) \times 
\Jset Sv\epsilon,
$
\item $W_\epsilon$ is a closed subset of 
$(-\epsilon, 1+\epsilon)^k \times \RR^{d+n-k}$.
\end{enumerate}
\end{condition}

The intersection $W=W_\epsilon\cap ( [0,1]^k \times \inter(I{}^{d+n-k}))$
is a compact manifold with corners. In the terminology of~\cite{Laures},
it is a $\mathord{<}k\mathord{>}$-manifold embedded ``neatly'' in
$[0,1]^k \times \inter( I{}^{d+n-k})$ and equipped
with a $\mathord{<}k\mathord{>}$-collar.

The size $\epsilon$ of this collar is not part of the structure. We tacitly form the colimit where $\epsilon$ tends to 0.

Given $\underline{a}\leq \underline{b}$ in $\RR^k$, let 
\[
J(\underline{a},\underline{b})=\prod_{i=1}^{k}[a^i,b^i],\quad
J_ \epsilon(\underline{a},\underline{b})=\prod_{i=1}^{k}(a^i-\epsilon,b^i+\epsilon).
\]
We consider $d$-dimensional submanifolds 
\[
W_\epsilon^d \subset J_ \epsilon(\underline{a},\underline{b})
\times \inter(I{}^{d+n-k}),
\]  
which satisfy the analogue of the three conditions 
in~\ref{co:corners}. Since $W_\epsilon$ is constant on the collars
by condition (iii), it defines an element 
$\widehat{W}_\epsilon\in D^k_{d,n}$ with
\[
W_{\epsilon}=\widehat{W}_\epsilon \cap (J_ \epsilon(\underline{a},\underline{b})
\times \inter( I{}^{d+n-k}))
\]
upon extending the collars.
We are more interested in the space $N_1{\cal C}^k_{d,n}$
of all intersections
\[
W=W_\epsilon \cap (J(\underline{a},\underline{b})
\times \inter(I{}^{d+n-k})).
\]
By the remarks above, $N_1{\cal C}^k_{d,n}$ may be considered
a subspace of $K_1(D^k_{d,n},Z^k_{d,n})$, cf. definition~\ref{def:cutSets}.
There are $k$ directions to compose elements of $N_1{\cal C}^k_{d,n}$.
This turns ${\cal C}^k_{d,n}$ into a strict $k$-tuple category.
For $k=1$ this is the category of embedded cobordisms, examined
in~\cite{Galatius},~\cite{GMTW} for $n=\infty$. One can express the homotopy type
of $N_1{\cal C}^k_{d,n}$ in terms of classifying spaces. We sketch 
the result.

The cube $[0,1]^k$ is a $\mathord{<}k\mathord{>}$-manifold with
\[
\partial_i[0,1]^k=[0,1]^{i-1}\times \{0,1\}\times [0,1]^{k-i}.
\]
Let $W^d$ be a compact $d$-dimensional $\mathord{<}k\mathord{>}$-manifold
with an $\epsilon$-collar, cf. lemma 2.1.6 of~\cite{Laures} and let
\[
\mathrm{Emb}_\epsilon(W^d,[0,1]^k\times \inter( I{}^{d+n-k}))
\]
be the space of embeddings that maps the $\epsilon$-collar of $W^d$
to the  $\epsilon$-collar of $[0,1]^k\times\inter(I{}^{d+n-k})$
in the obvious linear fashion. Let
\[
\mathrm{Emb}(W^d,[0,1]^k\times \inter(I{}^{d+n-k}))=
\mathop{colim}_{\epsilon \to 0}
\mathrm{Emb}_\epsilon(W^d,[0,1]^k\times \inter(I{}^{d+n-k})).
\] 
For small values of $n$, this space might be empty, namely if the given
diffeomorphism type $W$ does not embed in codimension $n$.
The diffeomorphism group $\mathrm{Diff}(W)$  of the
collared $\mathord{<}k\mathord{>}$-manifold $W$ acts freely on
the embedding space, and the orbit
\[
B^k_{d,n}(W)=
\mathrm{Emb}(W^d,[0,1]^k\times \inter( I{}^{d+n-k}))/\mathrm{Diff}(W)
\]
is the set of collared $\mathord{<}k\mathord{>}$-submanifolds of
$[0,1]^k\times \inter(I{}^{d+n-k})$ diffeomorphic to $W$.
For $n=\infty$, 
\[
B^k_{d,n}(W)\simeq
B\mathrm{Diff}(W)
\]
by Whitney's embedding theorem.
Let $C(k)$ be the space of all $k$-cubes with edges parallel to the axes,
\[
C(k)=\{J(\underline{a},\underline{b})\in \RR^k\mid
\underline{a} <\underline{b}\}. 
\] 
More generally, set 
\[
C(k-l)=\{J(\underline{a},\underline{b})\}\mid
a^i=b^i \text{ for precisely $l$ indices}\}.
\]
The subspace of non-degenerate morphisms of ${\cal C}^k_{d,n}$
is homeomorphic to 
$\coprod C(k)\times B^k_{d,n}(W^d)$ with $W^d$ ranging over all
$\mathord{<}k\mathord{>}$-manifolds of dimension $d$ that
embeds in $[0,1]^k\times \inter( I{}^{d+n-k}))$.
If we intersect such an embedded  manifold with one of the
$(k-1)$-dimensional faces we get a non-degenerate object of
${\cal C}^k_{d,n}$, alias a non-degenerate morphism of 
${\cal C}^{k-1}_{d-1,n}$ etc. So we have
\begin{proposition}
There is a homotopy equivalence
\[
N_1{\cal C}^k_{d,n} \simeq 
\coprod_{l=0}^k\coprod_{W^{d-l}}C(k-l)\times
B^{k-l}_{d-l,n}(W^{d-l}).
\]
\end{proposition}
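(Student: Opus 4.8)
The plan is to decompose the space of non-degenerate morphisms of $\mathcal{C}^k_{d,n}$ according to which of the $k$ coordinate intervals $[a^i,b^i]$ are degenerate, i.e.\ reduce to a point. First I would make precise the claim, already sketched in the text, that the full space $N_1\mathcal{C}^k_{d,n}$ retracts onto this non-degenerate subspace: a morphism $W = W_\epsilon \cap (J(\underline a,\underline b)\times \inter(I^{d+n-k}))$ is recorded by the cube $J(\underline a,\underline b)\in C(k-l)$ for some $0\le l\le k$ (where $l$ counts the collapsed coordinates) together with the manifold $W$ itself, which by Condition~\ref{co:corners} is a collared $\langle k\rangle$-manifold embedded neatly in $J(\underline a,\underline b)\times \inter(I^{d+n-k})$; when $l$ of the coordinates are collapsed this is, after the evident affine identification $J(\underline a,\underline b)\cong [0,1]^{k-l}$ (a point in the collapsed directions), exactly a collared $\langle k-l\rangle$-manifold of dimension $d-l$, embedded in $[0,1]^{k-l}\times \inter(I^{d+n-k})$. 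So set-theoretically the non-degenerate part is $\coprod_{l}\coprod_{W^{d-l}} C(k-l)\times B^{k-l}_{d-l,n}(W^{d-l})$, the disjoint union being over diffeomorphism types of $\langle k-l\rangle$-manifolds embeddable in codimension $n$.

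Next I would upgrade this bijection to a homeomorphism onto the non-degenerate stratum and then show the inclusion of the non-degenerate stratum into $N_1\mathcal{C}^k_{d,n}$ is a homotopy equivalence. For the homeomorphism: the topology on $N_1\mathcal{C}^k_{d,n}$ is the subspace topology from $K_1(D^k_{d,n}, Z^k_{d,n})\subset D^k_{d,n}\times \RR^{k}\times \RR^k$, which records the pair $(\widehat W_\epsilon, \underline a\le \underline b)$; the map to $C(k-l)\times B^{k-l}_{d-l,n}(W^{d-l})$ sends this to $(J(\underline a,\underline b), [\text{the rescaled neat embedding}])$, and its inverse glues back the $\langle k-l\rangle$-collar structure and re-extends the collars to produce $\widehat W_\epsilon \in D^k_{d,n}$ (in the $\epsilon\to 0$ colimit). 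Both maps are continuous because the embedding space carries the $C^\infty$ Whitney topology and the $\Psi_d$-topology restricts to it on sections of a fixed normal tube over a compact set, which is exactly the situation here since everything is supported in $J_\epsilon(\underline a,\underline b)\times \inter(I^{d+n-k})$. For the homotopy equivalence with the full $N_1$: a degenerate morphism ($J(\underline a,\underline b)$ of positive width in the $i$-th direction but with $W$ a product $W'\times [a^i,b^i]$ constant in that direction) deformation-retracts onto its non-degenerate truncation by shrinking $[a^i,b^i]$ to a point, and these retractions for the various $i$ can be combined; alternatively, one invokes that $N_1\mathcal{C}^k_{d,n}$ is the $1$-simplices of a good simplicial space (by the lemma just proved that the degeneracies are cofibrations) so the degenerate part is absorbed up to homotopy.

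Finally I would insert the identification $B^{k-l}_{d-l,n}(W^{d-l})\simeq B\mathrm{Diff}(W^{d-l})$ for $n=\infty$, which is recorded in the text as following from Whitney's embedding theorem (the embedding space is weakly contractible and $\mathrm{Diff}$ acts freely), so the statement holds with $B^{k-l}_{d-l,n}$ replaced by $B\mathrm{Diff}$ in that case; for finite $n$ one leaves it as $B^{k-l}_{d-l,n}$, and empty summands (diffeomorphism types not embeddable in codimension $n$) simply do not appear. I expect the main obstacle to be the bookkeeping in the first two paragraphs rather than any deep point: precisely matching the $\langle k\rangle$-collar data of Condition~\ref{co:corners} with Laures's $\langle k\rangle$-manifold collars under the coordinate collapses, and checking that the resulting maps are genuinely continuous for the Whitney/$\Psi_d$ topologies uniformly in the $\epsilon\to 0$ colimit. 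The homotopy-theoretic content (collapsing degenerate morphisms, goodness of the simplicial space) is comparatively routine given the lemmas already established.
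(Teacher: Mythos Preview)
Your overall approach---stratify $N_1\mathcal{C}^k_{d,n}$ by the number $l$ of collapsed coordinate intervals and identify each stratum with $C(k-l)\times\coprod_{W}B^{k-l}_{d-l,n}(W^{d-l})$---is exactly what the paper does. In fact the paper offers essentially no proof beyond the two sentences preceding the proposition: the non-degenerate morphisms are homeomorphic to $\coprod_W C(k)\times B^k_{d,n}(W^d)$, and intersecting with a face drops $k$ and $d$ by one, ``etc.'' So on the level of strategy you match the paper.

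There is, however, a genuine confusion in your second paragraph that you should fix. You write that ``a degenerate morphism ($J(\underline a,\underline b)$ of positive width in the $i$-th direction but with $W$ a product $W'\times[a^i,b^i]$) deformation-retracts onto its non-degenerate truncation by shrinking $[a^i,b^i]$ to a point.'' This has the terminology backwards: a morphism with $a^i<b^i$ is \emph{non}-degenerate in the $i$-th simplicial direction regardless of whether $W$ happens to be a product there, and collapsing $[a^i,b^i]$ to a point produces a \emph{degenerate} morphism, not a ``non-degenerate truncation.'' More seriously, this is not the mechanism by which the stratification becomes a homotopy equivalence: an arbitrary point of the $l=0$ stratum near the $l=1$ boundary need not have $W$ globally a product in direction $i$ (only on the $\epsilon$-collars), so you cannot simply shrink that interval. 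If you want to supply an argument the paper omits, a cleaner route is to note that the degeneracy maps $N_1\mathcal{C}^{k-1}_{d-1,n}\to N_1\mathcal{C}^k_{d,n}$ are closed cofibrations (same collar argument as in the goodness lemma for $K_\bullet$), so the filtration by $l$ is by cofibrations and the associated graded pieces are exactly the summands in the statement; alternatively, just observe that the paper is content to leave this as a description of the pieces and does not spell out the gluing.

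Your final paragraph about $B\mathrm{Diff}$ for $n=\infty$ is fine and matches the paper's remark, though it is not needed for the proposition as stated.
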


\subsection{The homotopy type of $B{\cal C}^k_{d,n}$}
\label{sec:homotopy-type}

The multi-nerve of a strict
$k$-tuple category is a $k$-dimensional simplicial space. The
associated diagonal simplicial space is denoted $N_\bullet{\cal C}^k_{d,n}$.
It is a subspace of $K_\bullet(D^k_{d,n};Z^k_{d,n})$. 

\begin{theorem}
\label{prop:he}
The inclusion
\[
N_\bullet{\cal C}^k_{d,n}\to K_\bullet(D^k_{d,n};Z^k_{d,n})
\]
induces a weak homotopy equivalence of realizations.
\end{theorem}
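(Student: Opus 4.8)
The plan is to show that the inclusion $N_\bullet{\cal C}^k_{d,n}\hookrightarrow K_\bullet(D^k_{d,n};Z^k_{d,n})$ is a degreewise deformation retract after replacing $K_\bullet$ by an equivalent sub-simplicial-space, and then deduce the equivalence of realizations. The essential point is that an element of $K_q(D^k_{d,n};Z^k_{d,n})$ is a pair $(W,\underline a_0\le\cdots\le\underline a_q)$ with all pairs compatible, i.e. $\widehat W$ is transverse to every coordinate hyperplane through any vertex in $\bigcup_i V(\underline a_i,\underline a_j)$, whereas an element of $N_q{\cal C}^k_{d,n}$ requires in addition the stronger \emph{product (collar) condition} from Condition~\ref{co:corners}(ii): near each such coordinate plane the manifold is literally a product. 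The transversality already present in $K_q$ is exactly what is needed to straighten a manifold to a product form on a small neighbourhood of a coordinate plane, and this straightening can be done canonically.

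First I would recall the standard tubular-neighbourhood/collaring fact: if a submanifold $V\subset\RR^{d+n}$ is transverse to an affine coordinate subspace $A_S(\underline a)$, then on a sufficiently small slab $A_S(\underline a,\epsilon)$ there is a canonical isotopy, supported in that slab, carrying $V$ to the product $(V\cap A_S(\underline a))\times J_S(\underline a,\epsilon)$; moreover the isotopy depends continuously (in the Whitney $C^\infty$ topology) on $V$ and smoothly on $\underline a$, and for nested slabs the straightenings are compatible. Because $\widehat W$ has proper projection to $\RR^k$, only finitely many of the relevant coordinate planes matter on any compact region, so these local straightenings for all the vertices in $\bigcup_{i\le j}V(\underline a_i,\underline a_j)$ can be performed simultaneously and patched (using a partition of unity in the $\RR^k$-directions) into a single ambient isotopy $\Phi_t$ of $\widehat W$, fixing $W$ away from small neighbourhoods of the coordinate planes, with $\Phi_1(\widehat W)$ satisfying the product condition. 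Thus $(W,\underline a_\bullet)\mapsto(\Phi_1\widehat W,\underline a_\bullet)$ defines, on each simplicial level $q$, a retraction $r_q\colon K_q\to N_q{\cal C}^k_{d,n}$, and $\Phi_t$ itself gives a deformation of $K_q$ onto the image, compatible with faces and degeneracies because the straightening data is natural in the vertex set.

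The remaining step is to pass from these degreewise deformation retractions to a weak equivalence of realizations. Since both $K_\bullet$ and $N_\bullet{\cal C}^k_{d,n}$ are sub-simplicial-spaces of $X\times N_\bullet(\RR^k)$ with $X=D^k_{d,n}$ metrizable, the good/properness machinery already invoked in the paper (the lemma that $K_\bullet(X,Z)$ is good in Segal's sense, together with the standard fact that a levelwise deformation retraction of good simplicial spaces realizes to a deformation retraction) gives $\Realization{N_\bullet{\cal C}^k_{d,n}}\simeq\Realization{K_\bullet(D^k_{d,n};Z^k_{d,n})}$. I would also check that $N_\bullet{\cal C}^k_{d,n}$ is itself a good simplicial space — its degeneracies are again "repeat a vector" maps and the same argument as in the preceding lemma applies once we know the product condition is preserved, which it is since repeating $\underline v_i$ adds no new coordinate planes.

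The main obstacle, and the place where care is genuinely needed, is the \emph{naturality and compatibility} of the straightening: one must produce the isotopy $\Phi_t$ so that it (a) varies continuously with $\widehat W$ in the topology of $\Psi_d(\RR^{d+n})$ (so that $r_q$ is continuous), (b) is strictly compatible with the simplicial face maps $d_i$ (which delete a vertex $\underline a_i$ and hence remove the associated planes — the straightening for the smaller vertex set must be the restriction of the larger one, forcing the patching partition of unity to be chosen consistently, e.g. depending only on the \emph{set} of coordinate values appearing, not on the indexing), and (c) does not disturb the already-product structure when some pairs are already compatible in the strong sense, so that $r_q$ restricts to the identity on $N_q{\cal C}^k_{d,n}$. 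Organizing the straightening as a single construction indexed by a finite set of "active" coordinate hyperplanes, with nested compatibility built in, is what makes all of (a)–(c) hold simultaneously; everything else is a routine application of the tubular neighbourhood theorem with parameters and the good-simplicial-space formalism.
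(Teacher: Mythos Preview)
Your straightening argument addresses only one of the two differences between $N_q{\cal C}^k_{d,n}$ and $K_q(D^k_{d,n};Z^k_{d,n})$. Making $\widehat W$ a product near each coordinate hyperplane through the vertices of $\bigcup_{i\le j}V(\underline a_i,\underline a_j)$ lands you in the intermediate subspace $K^\perp_q(D,Z)$ of elements with orthogonal corner structure, \emph{not} in the image of $N_q{\cal C}^k_{d,n}$. Recall how the inclusion $N_q{\cal C}^k_{d,n}\hookrightarrow K_q$ is defined: an element of $N_q{\cal C}^k_{d,n}$ lives only in an $\epsilon$-collar $J_\epsilon(\underline a_0,\underline a_q)\times\inter(I^{d+n-k})$, and is embedded into $K_q$ by extending the collars \emph{linearly} to all of $\RR^k$. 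Thus a point of $K_q$ lies in the image of $N_q{\cal C}^k_{d,n}$ only if $\widehat W$ is affine (a product with rays) on the whole of $(\RR^k\setminus J_\epsilon(\underline a_0,\underline a_q))\times\inter(I^{d+n-k})$. Your isotopy $\Phi_t$, being supported in small slabs near the interior coordinate planes, does nothing to the manifold outside the big cube, so $\Phi_1\widehat W$ is in general not of this form. The paper handles this as a separate second step (Theorem~\ref{th:degreewise}): after passing to $K^\perp_\bullet$, one stretches the outer collar by a diffeomorphism $\phi_\epsilon:(-\epsilon,1+\epsilon)\to\RR$ to push everything outside the cube off to infinity, and builds an explicit homotopy $\hat\psi_t\circ\mathrm{Res}^\infty_t$ back to the identity.

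There is also a continuity issue in your first step that you pass over too quickly. The collar width $\epsilon$ on which the straightening is valid depends on $(W,\underline a_\bullet)$ and cannot obviously be chosen as a single continuous positive function on all of $K_q$; the paper does not attempt this and instead works on compact subsets, obtaining for each compact $C\subset K_0(D,Z)(\underline 0)$ a uniform $\epsilon(C)$ and a relative homotopy, which suffices to kill relative homotopy groups. Incidentally, the paper's straightening is not via tubular neighbourhoods at all but via pullback along the squashing map $\hat\lambda_\epsilon(x^1,\dots,x^k,y)=(\lambda_\epsilon(x^1),\dots,\lambda_\epsilon(x^k),y)$, which is automatically compatible across all $S$ and all vertices simultaneously and requires no patching or partition of unity; your patching scheme, and the claimed strict simplicial compatibility of the retraction, would need more justification (and is in any case unnecessary, since a degreewise weak equivalence already gives the conclusion for good simplicial spaces).
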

The proof will occupy the rest of this section,
but before we embark on it, we list its obvious consequence

\begin{theorem} 
\label{th:homotopyType}
The weak homotopy type of 
$B{\cal C}^k_{d,n}=\Realization{N_\bullet{\cal C}^k_{d,n}}$
is given by 
\[
B{\cal C}^k_{d,n}\simeq \Omega^{d+n-k}\mathrm{Th}(U^\perp_ {d,n}),
\]
where $U^\perp_{d,n}$ is the $n$-dimensional canonical vector bundle over
the Grassmannian $G(d,n)$ of $d$-planes in $\RR^{d+n}$. In particular,
we have  the weak homotopy equivalence 
\[
\Omega B{\cal C}^k_{d,n}\simeq B{\cal C}^{k-1}_{d,n}\quad\text{ for $1\leq k\leq d+n$}.
\]
\end{theorem}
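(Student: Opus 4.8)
The plan is to reduce Theorem~\ref{th:homotopyType} to Theorem~\ref{prop:he} and then to sketch the proof of the latter. Granting Theorem~\ref{prop:he}, the inclusion realises to a weak equivalence $B{\cal C}^k_{d,n}=\Realization{N_\bullet{\cal C}^k_{d,n}}\xrightarrow{\sim}\Realization{K_\bullet(D^k_{d,n};Z^k_{d,n})}$, and the target is $\Omega^{d+n-k}\mathrm{Th}(U^\perp_{d,n})$ by Theorem~\ref{th:delooping}; this gives the first displayed equivalence. For the second, apply the first with $k$ replaced by $k-1$ (legitimate since $0\le k-1$ and $d+n-(k-1)\ge 0$) to get $B{\cal C}^{k-1}_{d,n}\simeq\Omega^{d+n-k+1}\mathrm{Th}(U^\perp_{d,n})=\Omega\bigl(\Omega^{d+n-k}\mathrm{Th}(U^\perp_{d,n})\bigr)\simeq\Omega B{\cal C}^{k}_{d,n}$, where for $k=1$ one reads $B{\cal C}^0_{d,n}$ as $\Realization{K_\bullet(D^0_{d,n};Z^0_{d,n})}=D^0_{d,n}$, which is $\Omega^{d+n}\mathrm{Th}(U^\perp_{d,n})$ directly by Theorem~\ref{th:delooping}. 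Thus everything rests on Theorem~\ref{prop:he}.

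For Theorem~\ref{prop:he} I would argue degreewise. Both $N_\bullet{\cal C}^k_{d,n}$ and $K_\bullet:=K_\bullet(D^k_{d,n};Z^k_{d,n})$ are good simplicial spaces --- for $K_\bullet$ this is the goodness lemma above, and for $N_\bullet{\cal C}^k_{d,n}$ the same argument applies (identify the image of a degeneracy by the equation setting two consecutive cut vectors equal, and deformation-retract a neighbourhood by moving the nearby cut, shrunk so that it stays inside a collar, which keeps the underlying manifold a product near it). Since the realisation of a degreewise weak equivalence between good simplicial spaces is a weak equivalence, it is enough to show that the inclusion $N_q{\cal C}^k_{d,n}\hookrightarrow K_q$ is a weak homotopy equivalence for every $q$. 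Unwinding definitions, a point of $K_q$ is a pair $(W,\underline a_0\le\dots\le\underline a_q)$ with $W\in D^k_{d,n}$ transverse to the affine corners $A_S(\underline v)$ for $\underline v$ a vertex of the configuration and $S\subseteq\{1,\dots,k\}$ (the compatibility condition of Definition~\ref{def:cutSets}), while $N_q{\cal C}^k_{d,n}$ is the sub-locus where $W$ is in addition a genuine product near each such corner, in the sense of Condition~\ref{co:corners}(ii), with the collar size taken in the colimit $\epsilon\to 0$.

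I would construct a deformation retraction of $K_q$ onto $N_q{\cal C}^k_{d,n}$ by straightening collars. Since $W$ is transverse to the finitely many corners $A_S(\underline v)$ of the configuration, standard collar theory for $\langle k\rangle$-manifolds (cf.~\cite{Laures}) produces, after a contractible space of choices of collar widths, of fibrewise product structures over the resulting slabs, and of damping profiles, an ambient isotopy --- supported near those corners --- carrying $W$ to a manifold that is a genuine product near each $A_S(\underline v)$; the isotopy is the identity where $W$ already has such product structure, and doing it continuously in $(W,\underline a_\bullet)\in K_q$ and coherently over the slabs of different corners yields the required degreewise deformation retraction. Passing to realisations --- or equivalently running the argument over $D^k_{d,n}$ and comparing with Corollary~\ref{cor:regularPoints} --- proves Theorem~\ref{prop:he}, and with the first paragraph, Theorem~\ref{th:homotopyType}.

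The real obstacle is this collar straightening as a \emph{family} construction: one must choose the widths, the fibrewise product structures and the damping functions continuously in $(W,\underline a_\bullet)$, with the correct degenerate behaviour as corner coordinates collide and as they approach the closed measure-zero singular sets of the relevant coordinate projections of $W$; one must make the straightening near one corner compatible with that near adjacent corners along their common lower-dimensional faces; and one must keep careful track of exactly which transversality and product conditions the points of $K_q$ and of $N_q{\cal C}^k_{d,n}$ satisfy, so that the construction genuinely lands in, and fixes, $N_q{\cal C}^k_{d,n}$ --- this matters for $k\ge 3$, where not every coordinate combination of the $\underline a_i$ is a vertex of the configuration. None of this is deep, since the relevant spaces of choices are convex or at least contractible, but it is where the work lies. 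A minor additional point is interleaving the construction with the colimit over collar sizes in the definition of $N_\bullet{\cal C}^k_{d,n}$, which is harmless because straightening only ever shrinks a collar.
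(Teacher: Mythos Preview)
Your reduction of Theorem~\ref{th:homotopyType} to Theorem~\ref{prop:he} via Theorem~\ref{th:delooping} is exactly the paper's logic, and the degreewise strategy for Theorem~\ref{prop:he} is also what the paper does. The differences lie in how the degreewise equivalence is established, and there is a genuine gap in your version.

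You describe the image of $N_q{\cal C}^k_{d,n}$ inside $K_q$ as the sub-locus where $W$ is a product near each corner of the configuration. That is only half of it. An element of $N_q{\cal C}^k_{d,n}$ is a manifold living over the box $J(\underline a_0,\underline a_r)$ (plus an $\epsilon$-collar), and the embedding into $K_q$ is by \emph{linearly extending the collars} to all of $\RR^k$. Thus the image consists of $W\in D^k_{d,n}$ that are products near the corners \emph{and} agree with that product extension on all of $\RR^k\setminus J_\epsilon(\underline a_0,\underline a_r)$. Your proposed ambient isotopy is supported near the corners, so it cannot alter $W$ far outside the box; after your straightening you land in $K^\perp_q$ (the paper's notation for ``product near the corners''), not in $N_q{\cal C}^k_{d,n}$. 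The paper handles this with a separate second step (Theorem~\ref{th:degreewise}): once $W$ is a product on an $\epsilon$-collar, one stretches the outside collar to infinity via a one-parameter family of diffeomorphisms $\psi_t$ of $\RR$, which in the limit replaces whatever $W$ did outside the box by the product extension.

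For the first step ($K_q\to K^\perp_q$) the paper's device is also worth noting, because it dissolves every obstacle you list. Rather than invoking collar theory for $\langle k\rangle$-manifolds, the paper pulls back $W$ along the map $\hat\lambda_\epsilon=(\lambda_\epsilon,\dots,\lambda_\epsilon,\mathrm{id})$ where $\lambda_\epsilon:\RR\to\RR$ is a fixed weakly monotone function that is constant on $(-\epsilon/2,\epsilon/2)$ and the identity outside $(-\epsilon,\epsilon)$. Transversality guarantees $(\hat\lambda_\epsilon)^*W$ is a manifold; the coordinate-wise nature of $\hat\lambda_\epsilon$ makes compatibility between different corners automatic; the only choice is the single number $\epsilon$, which can be taken locally constant on compacta of $K_q$; and the linear homotopy $(1-t)\mathrm{id}+t\hat\lambda_\epsilon$ gives the deformation. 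So the ``contractible choices, coherence across faces, behaviour as coordinates collide'' issues you flag never arise.
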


\begin{remark}
The above theorem works equally well for oriented manifolds letting $G(d,n)$ be the
space of oriented $d$-planes, or more generally for the
category ${\cal C}^k_{d,n}(\theta)$ of manifolds with a $\theta$-structure in the sense 
of~\cite{GMTW}, section 5 or~\cite{MadsenWeiss}, section 2:
\begin{align}
B{\cal C}^k_{d,n}(\theta)&\simeq \Omega^{d+n-k}\mathrm{Th}(\theta^*U^\perp_ {d,n}),\\
\Omega B{\cal C}^k_{d,n}(\theta)&\simeq B{\cal C}^{k-1}_{d,n}(\theta)\quad\text{ for $1\leq k\leq d+n$}.
\end{align}
\end{remark}

The simplicial spaces $N_\bullet{\cal C}^k_{d,n}$ and $K_\bullet(D^k_{d,n};Z^k_{d,n})$
differ in two aspects. Elements of $N_\bullet{\cal C}^k_{d,n}$ intersect the facets of
the $k^2$ cubes orthogonally in small collars whereas elements of 
$K_\bullet(D^k_{d,n};Z^k_{d,n})$ are merely transversal to the facets. The second difference 
is that the elements of $K_\bullet(D^k_{d,n};Z^k_{d,n})$ are supported on manifolds that
are closed subsets of $\RR^k\times \inter(I{}^{d+n-k})$ while elements of
$N_\bullet{\cal C}^k_{d,n}$ are only subsets of an $\epsilon$-collar of the union
of cubes that is associated to the  element. The inclusion of $N_\bullet{\cal C}^k_{d,n}$
into $K_\bullet(D^k_{d,n};Z^k_{d,n})$ is by extending the manifold in the 
$\epsilon $-collar ``linearly''.

The numbers $d,n$ and $k$ will be constant in the following, and we shall
from now on drop the indices and simply write $(D,Z)$. We prove
theorem~\ref{prop:he} in two steps, first modifying elements of $K_ 1(D,Z)$ to
have orthogonal intersection with the facets, and second making elements 
affine outside the $\epsilon$-collar of the union of the $k^2$ cubes.

We say that an element $(W,\underline{a})\in K_0(D,Z)$ has orthogonal corner
structure (at $\underline{a})$ if for each $S\in \{1,\dots ,k\}$ 
\begin{equation}
\label{eq:orthogonal}
W\cap A_S(\underline{a},\epsilon))=W_S(\underline{a})\times 
J_{S}(\underline{a},\epsilon).
\end{equation}
for some $\epsilon>0$. Here we use  the notation of 
section~\ref{sec:space-embedded-manifolds}, and in particular
\[
J_{S}(\underline{a},\epsilon)=\prod_{i\in S}(a^i-\epsilon,b^i+\epsilon).
\]
Let $K^\perp_0(D,Z)\subset K_0(D,Z)$ be the subspace of elements
with orthogonal corner structure at $\underline{a}$.

\begin{lemma}
The inclusion $K^\perp_0(D,Z)\to K_0(D,Z)$ is a weak
homotopy equivalence.
\begin{proof}
Let $K_0(D,Z)(\underline{0})$ be the subspace of $K_0(D,Z)$
consisting of elements $(W,\underline{0})$. It is
a deformation retract via the deformation
$(W,\underline{a})\mapsto (W-t\underline{a},(1-t)\underline{a})$
as $0\leq t\leq 1$. Similarly, $K^\perp_0(D,Z)(\underline{0})$ is a deformation
retract of $K^\perp_0(D,Z)$, so it suffices to show that
\[
K^\perp_0(D,Z)(\underline{0})\to K_0(D,Z)(\underline{0})
\]
is a weak homotopy equivalence.

Given $(W,\underline{0})\in K_0(D,Z)(\underline 0)$, the projection
$f_S:W\to \RR^S$ is transversal to each point in
$J_{S}(\underline 0,\epsilon)$. Let $\lambda_\epsilon:\RR \to \RR$ be
a fixed smooth function subject to the following
requirements:
\begin{enumerate}[(i)]
\item $\lambda_\epsilon$ is weakly increasing and proper,
\item $\lambda_\epsilon(x)=x$ for $x\leq -\epsilon$ and
$x \geq \epsilon$,
\item $\lambda_\epsilon(x)=0$ for $x\in (-\epsilon/2,\epsilon/2)$.
\end{enumerate}
Let $\hat\lambda_\epsilon:\RR^k\times \inter(I{}^{d+n-k})\to
\RR^k\times \inter( I{}^{d+n-k})$ be the function that sends
$(x^1,\dots,x^k,y)$ to $(\lambda_\epsilon(x^1),\dots,\lambda_\epsilon(x^k),y)$.
By the transversality assumption,
\[
(\hat\lambda_\epsilon)^*(W)=
\{(x,y)\in \RR^k\times \inter( I{}^{d+n-k})\mid 
\hat\lambda_\epsilon(\underline{x},\underline{y})\in W\}
\]
is a submanifold, and one easily checks that
\begin{align*}
(\hat \lambda_\epsilon)^*(W)_S(\underline{0})
&=W_S(\underline{0})\text{ , and}\\
(\hat\lambda_\epsilon)^*(W)_S \cap A(\underline{0},\epsilon/2)
&=W_S(\underline{0}) \times J_{S}(\underline{0},\epsilon/2).
\end{align*}
Thus $(\hat\lambda_\epsilon)^*(W)$ has an orthogonal 
corner structure at $\underline{0}$.
There is a path $(\hat\lambda_\epsilon^t)^*(W)$
from $W$ to $(\hat\lambda_\epsilon)^*(W)$ given by the
function
\[
\hat\lambda_\epsilon^t(x)=(1-t)x+t\hat\lambda_\epsilon(x)
\]
Note that the entire path $(\hat\lambda_\epsilon^t)^*(W)$
is in $K^\perp_0(D,Z)(\underline 0)$ when $W\in
K^\perp_0(D,Z)(\underline 0)$

The number $\epsilon>0$ depends on the given
$W\in K_0(D,Z)(\underline 0)$, but can be kept constant in a neighbourhood 
of $W$; this follows from Proposition~\ref{prop:Zclosed} and its proof.
Thus for each compact subset $C\subset K_0(D,Z)(\underline 0)$ there is an
$\epsilon=\epsilon(C)>0$ and a diagram
\[
\xymatrix{
C\times I\ar[r]^{h_t}\ar@{}[d]^-{\displaystyle\cup}
&K_0(D,Z)(\underline 0)\ar@{}[d]^-{\displaystyle\cup}\\
C\cap K_0^\perp(D,Z)(\underline 0)\ar[r]^{h_t^\perp}
&K_0^\perp(D,Z)(\underline 0)
} 
\]
which for $t=0$ is the inclusion diagram and such that
$h_1:C\to K_0^\perp(D,Z)(\underline 0)$. It follows that all
relative homotopy groups
$\pi_i(K_0(D,Z)(\underline 0),K_0^\perp(D,Z)(\underline 0))=0$.
\end{proof}
\end{lemma}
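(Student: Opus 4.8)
The plan is to reduce to the slice over $\underline{0}\in\RR^k$ and then show that the relative homotopy groups $\pi_i\bigl(K_0(D,Z)(\underline{0}),\,K_0^\perp(D,Z)(\underline{0})\bigr)$ vanish for all $i$ by compressing maps of compact pairs; since $K_0^\perp(D,Z)\to K_0(D,Z)$ is an inclusion, that is exactly what is needed.

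First I would observe that $K_0(D,Z)$ deformation retracts onto the subspace $K_0(D,Z)(\underline{0})$ of pairs $(W,\underline{0})$ by the translation $(W,\underline{a})\mapsto\bigl(W-t\underline{a},\,(1-t)\underline{a}\bigr)$, $0\le t\le1$. Translation carries orthogonal corner structure to orthogonal corner structure, so this deformation restricts to one of $K_0^\perp(D,Z)$ onto $K_0^\perp(D,Z)(\underline{0})$, and it suffices to treat the inclusion $K_0^\perp(D,Z)(\underline{0})\hookrightarrow K_0(D,Z)(\underline{0})$.

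The compression is a straightening construction. Fix once and for all a proper, weakly increasing smooth function $\lambda_\epsilon\colon\RR\to\RR$ with $\lambda_\epsilon(x)=x$ for $|x|\ge\epsilon$ and $\lambda_\epsilon(x)=0$ for $|x|\le\epsilon/2$, and let $\hat\lambda_\epsilon$ act by $\lambda_\epsilon$ in each of the first $k$ coordinates of $\RR^k\times\inter(I^{d+n-k})$ and by the identity on the remaining coordinates. For $(W,\underline{0})\in K_0(D,Z)(\underline{0})$ the transversality of each projection $f_S$ to a neighbourhood of the origin in $\RR^S$ makes the pullback $(\hat\lambda_\epsilon)^*(W)$ a properly embedded submanifold---here one checks that $\hat\lambda_\epsilon$ is proper and preimages of closed sets are closed, so membership in $D^k_{d,n}$ is retained---and the construction forces $(\hat\lambda_\epsilon)^*(W)$ to split as $W_S(\underline{0})\times J_S(\underline{0},\epsilon/2)$ near $A_S(\underline{0},\epsilon/2)$, i.e.\ to have orthogonal corner structure at $\underline{0}$. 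The affine interpolation $\hat\lambda_\epsilon^t=(1-t)\,\mathrm{id}+t\,\hat\lambda_\epsilon$ joins $W$ to $(\hat\lambda_\epsilon)^*(W)$ through $K_0(D,Z)(\underline{0})$, and remains inside $K_0^\perp(D,Z)(\underline{0})$ whenever $W$ already lies there, so the homotopy is relative to $K_0^\perp$.

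The one genuinely non-formal point---and where the compactness of the source is essential---is that the usable $\epsilon$ depends on $W$. Here Proposition~\ref{prop:Zclosed} and its proof come in: they show that an $\epsilon$ witnessing the transversality of the $f_S$ near $\underline{0}$ can be kept constant on a neighbourhood of any given $W$, so covering a compact $C\subset K_0(D,Z)(\underline{0})$ by finitely many such neighbourhoods yields a single $\epsilon=\epsilon(C)>0$. The straightening homotopy is then defined on all of $C$, carries it into $K_0^\perp(D,Z)(\underline{0})$ at time $1$, and is stationary on $C\cap K_0^\perp(D,Z)(\underline{0})$; hence $\pi_i\bigl(K_0(D,Z)(\underline{0}),K_0^\perp(D,Z)(\underline{0})\bigr)=0$ for every $i$, and combined with the first step this gives the claim.
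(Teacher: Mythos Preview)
Your proof is essentially identical to the paper's: the same translation to the slice at $\underline{0}$, the same pullback by the coordinate-wise flattening $\hat\lambda_\epsilon$, the same affine interpolation $\hat\lambda_\epsilon^t$, and the same appeal to Proposition~\ref{prop:Zclosed} to obtain a uniform $\epsilon$ on compacta. One small slip: the homotopy is not literally \emph{stationary} on $C\cap K_0^\perp(D,Z)(\underline{0})$ (if $W$ has orthogonal corner structure only on a collar of width $<\epsilon$, the straightening still moves it), but---as you correctly say a sentence earlier---it keeps that set inside $K_0^\perp(D,Z)(\underline{0})$, which is exactly what the relative-homotopy argument needs.
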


An element $(W,\underline{a}_0,\dots,\underline{a}_r)\in K_r(D,Z)$ 
gives rise to a subdivision of the cube 
$C(\underline{a},\underline{b})$
into $(r-1)^{k}$ sub cubes. Define $K^\perp_r(D,Z)$ to be the subspace
of $K_r(D,Z)$ of elements $(W,\underline{a}_0,\dots,\underline{a}_r)$ where
$(W,\underline{v})\in K_0^\perp(D,Z)$ for each sub cube vertex $\underline{v}$.

\begin{corollary}
\label{cor:perp}
The inclusion $K^\perp_r(D,Z)\to K_r(D,Z)$ is a weak
homotopy equivalence, $r\geq 0$.
\begin{proof}
Apply the homotopy constructed in the proof 
of the the previous lemma simultaneously to
$(W,\underline{v})\in K_0(D,Z)$ for all vertices 
$\underline{v}$ in the sub cubes.
\end{proof}
\end{corollary}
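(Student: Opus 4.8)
The plan is to prove the stronger assertion that every relative homotopy group $\pi_i\big(K_r(D,Z),\,K^\perp_r(D,Z)\big)$ vanishes, following the argument for the preceding lemma (the case $r=0$) but running its straightening homotopy at all the relevant corner vertices at once. Concretely, given a compact set $C\subset K_r(D,Z)$ I will produce a homotopy $h_t\colon C\to K_r(D,Z)$, $t\in[0,1]$, with $h_0$ the inclusion, $h_1(C)\subset K^\perp_r(D,Z)$, and $h_t\big(C\cap K^\perp_r(D,Z)\big)\subset K^\perp_r(D,Z)$ throughout; this is exactly what is needed to conclude that the relative homotopy groups vanish, and hence that the inclusion is a weak homotopy equivalence.

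Fix $(W,\underline{a}_0\leq\dots\leq\underline{a}_r)\in K_r(D,Z)$. For each coordinate $i$ let $c^i_1<\dots<c^i_{m_i}$ be the distinct values among $a^i_0,\dots,a^i_r$; the vertices $\underline{v}$ of the induced subdivision of $C(\underline{a}_0,\underline{a}_r)$ have each coordinate $v^i$ equal to some $c^i_\ell$, and by the constraints built into $K_r(D,Z)$ each of them satisfies $(W,\underline{v})\in K_0(D,Z)$, so $W$ is transverse to $A_S(\underline{v})$ for every such $\underline{v}$ and every $S\subset\{1,\dots,k\}$. Now imitate the lemma: choose for each $i$ a smooth, weakly increasing, proper map $\Lambda^i\colon\RR\to\RR$ that equals the identity outside a small neighbourhood of $\{c^i_1,\dots,c^i_{m_i}\}$ and is constant on an even smaller neighbourhood of each $c^i_\ell$ — the several-flat-spot analogue of the function $\lambda_\epsilon$ of the lemma — set $\hat\Lambda=(\Lambda^1,\dots,\Lambda^k,\mathrm{id})$, and replace $W$ by the pullback $\hat\Lambda^*(W)$. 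By transversality $\hat\Lambda^*(W)$ is a submanifold; it has an orthogonal corner structure at every subdivision vertex simultaneously; and the linear path $\hat\Lambda^t=(1-t)\,\mathrm{id}+t\,\hat\Lambda$ gives a path $(\hat\Lambda^t)^*(W)$ inside $K_r(D,Z)$ from $W$ to $\hat\Lambda^*(W)$ which stays in $K^\perp_r(D,Z)$ whenever it starts there. Just as in the lemma, the sizes of the neighbourhoods entering the $\Lambda^i$ can be kept constant near a given $W$ — this is the point at which Proposition~\ref{prop:Zclosed} and its proof are invoked — so the construction goes through uniformly over the compact family $C$, producing the desired $h_t$; the case $r=0$ is the preceding lemma.

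The hard part will be the continuity of the assignment $(W,\underline{a}_\bullet)\mapsto\hat\Lambda^*(W)$ at degenerate and near-degenerate configurations: when two values $a^i_\ell$ collide, the two flat spots of $\Lambda^i$ around them must merge into the single flat spot present at the degenerate configuration, where the definition of $K^\perp_0$ still requires a flat spot of positive width, so a careless choice of $\Lambda^i$ is discontinuous there. I would resolve this by letting the flat spot of $\Lambda^i$ around a cluster of nearly coincident $c^i_\ell$ be one interval covering the whole cluster — this is harmless, since orthogonality at a vertex $\underline{v}$ only requires $\Lambda^i$ to be locally constant near $v^i$, not to fix $v^i$ — with the interval endpoints and the constant values on the flat spots chosen to vary continuously, and with uniformly bounded derivative of the interpolating ``bridges'', in $\underline{a}_\bullet$; one also keeps each $\Lambda^i$ uniformly $C^1$-close to the identity over $C$ so that the transversality needed for $(\hat\Lambda^t)^*(W)$ to remain a submanifold for all $t$ persists, again by the uniform form of Proposition~\ref{prop:Zclosed}. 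Granting this bookkeeping near degeneracies, which is the only genuine obstacle, all the relative homotopy groups vanish and the corollary follows.
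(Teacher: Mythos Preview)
Your proposal is correct and is exactly the paper's approach: apply the straightening homotopy of the preceding lemma simultaneously at all subdivision vertices via a multi-flat-spot version of $\lambda_\epsilon$, then use the compactness argument and Proposition~\ref{prop:Zclosed} to choose $\epsilon$ uniformly. The paper's own proof is literally the one-line instruction to do this and does not discuss your continuity concern at degenerate configurations, so your elaboration is more careful than the original.
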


Next we consider an embedding 
$N_\bullet {\cal C}^{k}_{d,n} 
\to K^{\perp}_\bullet(D^{k}_{d,n},Z^{k}_{d,n})$ 
where
$N_{\bullet}{\cal C}^{k}_{d,n}$ is the diagonal simplicial space
of the $k$-dimensional multi nerve. We first describe the image of
the embedding.  

For $\underline{a}\leq \underline{b}$, remember the notation
\[
J(\underline{a},\underline{b})=\prod [a^{i},b^{i}], \quad
J_{\epsilon}(\underline{a},\underline{b})=\prod (a^{i}-\epsilon,b^{i}+\epsilon).
\]

An element $(W,\underline a_0,\dots,\underline a_r) \in  N_r{\cal C}^k_{d,n}$
is by definition the intersection of $W_\epsilon\subset J_ \epsilon(\underline a_0,\underline a_r)
\times \RR^{d+n-k}$ with $ J(\underline a_0,\underline a_r)
\times \RR^{d+n-k}$ , where $W_\epsilon$ is a product in an $\epsilon$-collar of the
boundary of  $ J(\underline a_0,\underline a_r)$, cf. \S (\ref{sec:manifolds-in-cube}).
One defines $\hat W_\epsilon \subset \RR^k \times \inter(I^{d+n-k})$ by extending
the $\epsilon$-collars, and obtain the embedding
\[
 N_r{\cal C}^k_{d,n} \hookrightarrow  K^{\perp}_r(D^{k}_{d,n},Z^{k}_{d,n})
\]
by sending $(W,\underline a_0,\dots,\underline a_r)$ to
$(\hat W_\epsilon,\underline a_0,\dots,\underline a_r)$. There is a retraction by 
intersecting $(\hat W,\underline a_0,\dots,\underline a_r)\in   K^{\perp}_\bullet(D^{k}_{d,n},Z^{k}_{d,n}) $
with an $\epsilon$-collar of the boundary of  $ J_\epsilon(\underline a_0,\underline a_r)$
for some $\epsilon$. The elements of 
$ N_r{\cal C}^k_{d,n}$ and $ K^{\perp}_\bullet(D^{k}_{d,n},Z^{k}_{d,n})$ agree on
 $ J_ \epsilon(\underline a_0,\underline a_r)
\times \RR^{d+n-k}$ but may differ on
the complement 
$ (\RR^k \setminus J_ \epsilon(\underline a_0,\underline a_r))
\times \RR^{d+n-k}$. 

\begin{theorem}
\label{th:degreewise}
For each $r$, the inclusion
\[
 N_r{\cal C}^k_{d,n} \hookrightarrow  K^{\perp}_r(D^{k}_{d,n},Z^{k}_{d,n})
\]
is a homotopy equivalence.
\begin{proof}
Given 
$ (\hat W,\underline a_0,\dots,\underline a_r)\in   K^{\perp}_r(D^{k}_{d,n},Z^{k}_{d,n})$
we must specify a curve from this element to $ N_r{\cal C}^k_{d,n}$, independent of 
$\hat W$ and depending continuously on $(\underline a_0,\dots,\underline a_r)$.
The idea is to expand the outside collar of  $ J_ \epsilon(\underline a_0,\underline a_r)$
without moving the collar of size $\epsilon/2$.

If $\underline a_0 < \underline a_r$, we can scale by an affine map to 
$\underline a_0=(0,\dots,0)$ and $\underline a_r=(1,\dots,1)$, so that 
$ J_ \epsilon(\underline a_0,\underline a_r)=(-\epsilon,1+\epsilon)^k$.
The degenerate situation $\underline a_0=\underline a_r$ is similar but
easier.

We introduce the following notation. For $0<\epsilon\leq \mu$ let $D^\perp_{\mu,\epsilon}$
denote the space of submanifolds 
\[
W\subset (-\mu,1+\mu)^k\times \inter (I^{d+n-k})
\]
which are closed as subsets of $ (-\mu,1+\mu)^k\times \RR^{d+n-k}$
and such that the restriction
\[
\mathrm{Res}^\mu_\epsilon(W)=W\cap (-\epsilon,1+\epsilon)^k\times \inter(I^{d+n-k})
\]
satisfies the three conditions of~(\ref{co:corners}).

We define the embedding by extending the outside collar
\[
\phi_\epsilon:D^\perp_{\epsilon,\epsilon}\to  D^\perp_{\infty,\epsilon}
\]
as follows. Choose a standard diffeomorphism
\[
\phi_\epsilon: (-\epsilon,1+\epsilon)\to \RR.
\]
The $\phi^k_\epsilon\times \mathrm{id}$ is a diffeomorphism from 
$ (-\epsilon,1+\epsilon)^k\times \inter(I^{d+n-k})$
to $ \RR^k\times \inter(I^{d+n-k})$ and
\[
\hat\phi_\epsilon(W):=(\phi_\epsilon^k\times \mathrm{id})(W)
\]
We have 
\begin{equation}
\label{eq:required-homotopy}
\mathrm{Res}_\epsilon^\infty\circ \hat \phi_\epsilon \simeq \mathrm{id}\text{ and }
 \hat \phi_\epsilon\circ\mathrm{Res}_\epsilon^\infty\simeq \mathrm{id}
\end{equation}
The first equivalence is obvious. The second homotopy is given in the following
way. For $t\geq \epsilon$, let $\rho_t:\RR\to \RR$ be the affine map with
$\rho_t(-t)=- \epsilon$ and $\rho_t(1+t)=1+\epsilon$. Set 
$\psi_t=\rho_t^{-1 }\circ \phi_\epsilon\circ \rho_t$ and consider
\[
\hat \psi_t:D^\perp_{t,\epsilon}\to D^\perp_{\infty,\epsilon}
 \]  
Since $\psi_t$ is constant on subintervals of $(-t,1+t)$ that tends to
$(-\infty,\infty)$ for $t\to \infty$, the composition
\[
\hat \psi_t\circ \mathrm{Res}^\infty_t:D^\perp_{\infty,\epsilon}\to D^\perp_{\infty,\epsilon}
\]
has limit $\hat \psi_\infty=\mathrm{id}$ at $t\to \infty$. Thus 
$W_t=\hat \psi_t\circ \mathrm{Res}^\infty_t(W)$ is a curve from 
$\hat \phi_\epsilon\circ \mathrm{Res}^\infty_\epsilon(W)$ to $�₁�W$
at $t\in [\epsilon,\infty)$. This is the required homotopy in (\ref{eq:required-homotopy}).
\end{proof}
\end{theorem}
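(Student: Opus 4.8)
The plan is to reduce the assertion to a statement about submanifolds of one fixed cube, and then to exhibit an explicit homotopy inverse to the inclusion: ``restrict to the inner collar and then re-expand''.

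First I would normalise the vertex data. Given $(\hat W,\underline a_0,\dots,\underline a_r)\in K^\perp_r(D^k_{d,n},Z^k_{d,n})$ with $\underline a_0<\underline a_r$, there is a unique affine automorphism of $\RR^k$ carrying $\underline a_0\mapsto\underline 0$ and $\underline a_r\mapsto\underline 1$; it sends the grid of sub-cube vertices cut out by the $\underline a_i$ to the standard subdivision of $[0,1]^k$, varies continuously with $(\underline a_0,\dots,\underline a_r)$, and preserves both $N_r{\cal C}^k_{d,n}$ and $K^\perp_r$. On the degenerate strata, where $a_0^i=a_r^i$ for some coordinates $i$, one performs the same rescaling in the non-degenerate coordinates only and does nothing in the others (landing in a lower $k$-tuple category); since the homotopy constructed below acts as the identity in every degenerate direction, these strata will glue continuously onto the generic one. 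Thus it suffices to treat the standard cube $[0,1]^k$.

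Second, I would package the manifold data. For $0<\epsilon\le\mu\le\infty$ let $D^\perp_{\mu,\epsilon}$ be the space of $d$-submanifolds $W\subset(-\mu,1+\mu)^k\times\inter(I^{d+n-k})$ that are closed as subsets of $(-\mu,1+\mu)^k\times\RR^{d+n-k}$ and whose restriction $\mathrm{Res}^\mu_\epsilon(W)=W\cap\bigl((-\epsilon,1+\epsilon)^k\times\inter(I^{d+n-k})\bigr)$ satisfies the product-collar and transversality requirements of Condition~\ref{co:corners} at every sub-cube vertex. Unravelling definitions, over the standard cube one has $N_r{\cal C}^k_{d,n}=\operatorname{colim}_{\epsilon\to0}D^\perp_{\epsilon,\epsilon}$ and $K^\perp_r(D^k_{d,n},Z^k_{d,n})=\operatorname{colim}_{\epsilon\to0}D^\perp_{\infty,\epsilon}$, and the inclusion is linear extension of the outer collar. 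Fix $\epsilon$ and a diffeomorphism $\phi_\epsilon\colon(-\epsilon,1+\epsilon)\to\RR$ that is the identity on $[-\epsilon/2,1+\epsilon/2]$ (hence fixes every sub-cube vertex). Then $\hat\phi_\epsilon\colon D^\perp_{\epsilon,\epsilon}\to D^\perp_{\infty,\epsilon}$, $W\mapsto(\phi_\epsilon^{\times k}\times\mathrm{id})(W)$, is precisely the inclusion at level $\epsilon$, the candidate inverse is $\mathrm{Res}^\infty_\epsilon$, and $\mathrm{Res}^\infty_\epsilon\circ\hat\phi_\epsilon=\mathrm{id}$ is immediate because $W$ is already a product on its $\epsilon$-collar.

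Third --- the crux --- I would build a homotopy $\hat\phi_\epsilon\circ\mathrm{Res}^\infty_\epsilon\simeq\mathrm{id}$ on $D^\perp_{\infty,\epsilon}$ that never disturbs the manifold inside the $\epsilon/2$-collar of $[0,1]^k$. For $t\ge\epsilon$ let $\rho_t\colon\RR\to\RR$ be the affine map with $\rho_t(-t)=-\epsilon$ and $\rho_t(1+t)=1+\epsilon$, set $\psi_t=\rho_t^{-1}\circ\phi_\epsilon\circ\rho_t\colon(-t,1+t)\to\RR$ --- a diffeomorphism that is the identity on $[-\epsilon/2,1+\epsilon/2]$ for every $t\ge\epsilon$, and in fact on an interval exhausting $\RR$ as $t\to\infty$ --- and put $W_t=(\psi_t^{\times k}\times\mathrm{id})\bigl(\mathrm{Res}^\infty_t(W)\bigr)$. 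Then $W_t\to W$ as $t\to\infty$, while $W_\epsilon=\hat\phi_\epsilon\circ\mathrm{Res}^\infty_\epsilon(W)$; reparametrising $[\epsilon,\infty)$ to $[0,1]$ yields the homotopy, so $\hat\phi_\epsilon$ is a homotopy equivalence and hence so is the original inclusion. What is left is routine but must be checked: each $W_t$ again lies in $D^\perp_{\infty,\epsilon/2}$ --- closedness survives since $\psi_t$ is a proper diffeomorphism onto $\RR$, and the collar and transversality conditions near $[0,1]^k$ are untouched because $\psi_t=\mathrm{id}$ on $[-\epsilon/2,1+\epsilon/2]$ --- and the construction is continuous in $W$ and $t$ and compatible with the colimit over $\epsilon\to0$ and with the rescaling of step one. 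I expect the genuine work to be exactly this last point: promoting the geometrically obvious move (dilate a collar without touching its inner half) to a single homotopy of the whole mapping space that is natural in the manifold and continuous in --- and across the degenerations of --- the cube data $(\underline a_0,\dots,\underline a_r)$.
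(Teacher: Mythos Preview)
Your proposal is correct and follows essentially the same approach as the paper: normalise to the standard cube, introduce the auxiliary spaces $D^\perp_{\mu,\epsilon}$, and construct the homotopy $\hat\phi_\epsilon\circ\mathrm{Res}^\infty_\epsilon\simeq\mathrm{id}$ via the conjugated diffeomorphisms $\psi_t=\rho_t^{-1}\circ\phi_\epsilon\circ\rho_t$. Your version is in fact slightly more explicit than the paper's in requiring $\phi_\epsilon$ to be the identity on $[-\epsilon/2,1+\epsilon/2]$ (so one composite is literally the identity), in tracking the sub-cube vertices, and in spelling out the degenerate strata.
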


A map of simplicial spaces $X_\bullet\to Y_\bullet$ which is a degreewise
homotopy equivalence induces a (weak) homotopy equivalence of 
topological realizations, so theorem \ref{prop:he} is a consequence of theorem
\ref{th:degreewise}.

It is sometimes more convenient to work with the simplicial space of
discrete cut sets $K^\delta_\bullet(D^k_{d,n},Z^k_{d,n})$ rather than with
$K_\bullet(D^k_{d,n},Z^k_{d,n})$ where the cut points can move
continuously. 
We let $ N_\bullet^\delta{\cal C}^k_{d,n}$ be the set $N_\bullet{\cal C}^k_{d,n}$,
but re-topologized as a subset of $ K^{\delta}_\bullet(D^{k}_{d,n},Z^{k}_{d,n})$.
This gives the diagram of simplicial spaces
\begin{equation}
\label{eq:continuous-to-discrete}
\xymatrix{
N_\bullet^\delta{\cal C}^k_{d,n}\ar[r]\ar[d]&
 K^{\delta}_\bullet(D^{k}_{d,n},Z^{k}_{d,n})\ar[d]\\
N_\bullet{\cal C}^k_{d,n}\ar[r]&
 K_\bullet(D^{k}_{d,n},Z^{k}_{d,n})
}
\end{equation}
where the horizontal arrows are inclusions. In the proof above of theorem
\ref{th:degreewise}, we did not move the cut points, so the same argument gives

\begin{addendum}
The map $ N_\bullet^\delta{\cal C}^k_{d,n} \rightarrow  K^{\delta}_\bullet(D^{k}_{d,n},Z^{k}_{d,n})$
induces a weak homotopy equivalence.
\end{addendum}
The right hand vertical map in (\ref{eq:continuous-to-discrete})
is a weak homotopy equivalence by theorem  \ref{th:degreewise}, so
 (\ref{eq:continuous-to-discrete}) is a diagram of weak
homotopy equivalences.

\section{Simplicial spaces}
\label{sec:simplicial-spaces}
 The purpose of this section is to prove some facts about 
the realizations of simplicial spaces that we need for the proof
of theorem~\ref{th:hecriterion}. We
construct a regular neighbourhood of a degreewise open subset,
and apply this to give a criterion that ensures that
a map with contractible fibres is a homotopy equivalence.

\subsection{The second derived neighbourhood of simplicial spaces}
\label{sec:second-deriv-neighb}

This section contains a version of the regular neighbourhood theorem 
for a pair of simplicial spaces. 
Suppose that $Y_{\bullet}\subset X_{\bullet}$ is a simplicial
space with a simplicial subspace, and assume for convenience that
the spaces are degreewise compactly generated (\cite{Steenrod}). 
This is the case
for example if $X_{\bullet}$ consists of metrizable spaces.
Suppose that in each degree
the inclusion is the inclusion of an open subspace. As the special case of
simplicial sets shows, we cannot expect that the induced map
of realizations is the inclusion of an open subset. 
For a discussion of this, see \cite{Lewis}.

Let the open star $\mathrm{St}(X_{\bullet},Y_{\bullet})$ be the 
union of all open simplices $t$ in $\Realization{X_{\bullet}}$ such
that at least one vertex of $t$ is contained in $Y_{0}$.
We consider the vertex maps
$v_{i}\colon X_{n}\to X_{0}, (0\leq i\leq n)$ 
that maps a simplex to its $i^{\text{th}}$
vertex (induced by the inclusion $[0]\ni 0\mapsto i \in [n]$).

\begin{lemma} 
\label{le:open}
Assume that $Y_{n}$ is an open subset of $X_{n}$.
Then the open star $\mathrm{St}(X_{\bullet},Y_{\bullet})$ is an open
subset of $\Realization {X_{\bullet}}$.
\begin{proof}

Let $\phi_{n} \colon X_{n}\times \Delta^{n}\to \Realization{X_{\bullet}}$
be the characteristic map. By the definition of the topology of
the realization, it is enough to show that for every $n$,
the set $\phi_{n}^{-1}(\mathrm{St}(X_{\bullet},Y_{\bullet}))$ is open.

Let $Y^{\prime}_{n} = \{ x\in X_{n}\mid {\underline v}_{i}(x)\in Y_{0} \text{ for
  some $i$}\}$. 
Since $Y_{0}$ is open in $X_{0}$ and each $v_i$ is continuous
this is an open subset of $X_{n}$.
If $\alpha : [k] \to [n]$
is a morphism in the simplicial category, then
$(\alpha^*)^{-1}(Y^{\prime}_{k}) \subset Y^{\prime}_{n}$.
By definition,
\[
St(X_{\bullet},Y_{\bullet})=
\bigcup_{k} \phi_{k}(Y^{\prime}_{k}\times \mathrm{int}(\Delta^{k})).
\]

A point in $\Realization{X_{\bullet}}$ is uniquely represented by some
$(y,t)\in X_{n}\times \mathrm{int}(\Delta^{n})$, so this union is 
actually a disjoint union. Moreover, if $(x,s)\in X_{k}\times
\Delta^{k}$ 
is any other
representative of the same point, there is an injective morphism 
$\alpha\colon [k]\to [n]$ such that $\alpha_*(t)=s$ and
$\alpha^*(x)=y$ (\cite{May}, lemma 14.2).

Given a point $(y,t)\in Y_{k}^{\prime}\times
\mathrm{int}(\Delta^{k})$  we have to show the following property: 
If $(x,s)\in X_{n}\times
\Delta^{n}$ represents the same point as $(y,t)$ in $\Realization{X_{\bullet}}$, 
then $(x,s)$ is an inner point
of $\phi_n^{-1}(\mathrm{St}(X_{\bullet},Y_{\bullet}))$.
Let $\alpha\colon [k]\to [n]$ be as above. If $k=n$, the openness
follows because $Y_{n}^{\prime}$ is an open set, so we can assume that
$k<n$, and $s \in \partial \Delta^n$.

We claim that there is an open neighbourhood 
$U\subset \Delta^n$ of $s$, such that 
$(\alpha^*)^{-1}Y_k^\prime \times U$ is an an open set contained  
in $\phi_n^{-1}(\mathrm{St}(X_{\bullet},Y_{\bullet}))$. By induction on
$n$, we can assume that there is an open neighbourhood $V$ of 
$s$ in $\partial \Delta^n$, so that 
\[
(\alpha^*)^{-1}Y_k^\prime \times V \subset 
\phi_n^{-1}(\mathrm{St}(X_{\bullet},Y_{\bullet}))\cap X_n \times \partial \Delta^n.
\]
 
On the other hand, $(\alpha^*)^{-1}Y_k^\prime\times \mathrm{int}(\Delta^n)
\subset Y_n^\prime \times \mathrm{int}(\Delta^n) \subset 
\phi_n^{-1}(\mathrm{St}(X_{\bullet},Y_{\bullet}))\cap X_n \times \mathrm{int}(\Delta^n)$,
so that we can find the wanted neighbourhood $U$ by choosing it
as an arbitrary open neighbourhood of $s$ in $\Delta^n$ such that
$U\cap \partial \Delta^n \subset V$. 
 \end{proof} 
\end{lemma}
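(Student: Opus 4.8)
Since $\Realization{X_\bullet}$ carries the quotient topology defined by the characteristic maps $\phi_n\colon X_n\times\Delta^n\to\Realization{X_\bullet}$, the plan is to prove that $\phi_n^{-1}(\mathrm{St}(X_\bullet,Y_\bullet))$ is open in $X_n\times\Delta^n$ for every $n$, by exhibiting it as a finite union of products of open sets.

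First I would introduce $Y'_n=\bigcup_{i=0}^{n} v_i^{-1}(Y_0)\subset X_n$, where $v_i\colon X_n\to X_0$ denotes the $i$-th vertex map. Since each $v_i$ is continuous and $Y_0$ is open in $X_0$, the set $Y'_n$ is open in $X_n$ (it will contain degenerate simplices in general, but this does no harm). The identity I would aim for is
\[
\phi_n^{-1}\big(\mathrm{St}(X_\bullet,Y_\bullet)\big)=\bigcup_{i=0}^{n} v_i^{-1}(Y_0)\times\{\,s\in\Delta^n \mid s_i>0\,\},
\]
whose right-hand side is visibly open, so the lemma follows once this identity is established.

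To prove the identity I would invoke the standard structural description of geometric realizations (Eilenberg--Zilber together with \cite{May}, Lemma~14.2): every point $p\in\Realization{X_\bullet}$ has a unique representative $(z,u)$ with $z$ non-degenerate and $u$ interior to its simplex, and an arbitrary representative $(x,s)\in X_n\times\Delta^n$ of $p$ factors as the face $\alpha\colon[j]\hookrightarrow[n]$ spanned by the support $\{i\mid s_i>0\}$ of $s$, followed by a surjection $\sigma\colon[j]\twoheadrightarrow[m]$ with $\alpha^* x=\sigma^* z$. The crucial bookkeeping step is to check that the vertex set $\{v_\ell(z)\}_\ell\subset X_0$ of the open simplex through $p$ coincides with $\{v_i(x)\mid s_i>0\}$; this follows from the relations $v_{\alpha(j')}(x)=v_{\sigma(j')}(z)$ for all $j'\in[j]$ together with the facts that $\alpha$ is the inclusion of the support and $\sigma$ is onto. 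Hence $p\in\mathrm{St}(X_\bullet,Y_\bullet)$ --- i.e.\ some vertex of the open simplex through $p$ lies in $Y_0$ --- precisely when $v_i(x)\in Y_0$ for some $i$ with $s_i>0$, which is exactly the displayed identity. As a byproduct one also recovers the description $\mathrm{St}(X_\bullet,Y_\bullet)=\bigcup_k\phi_k(Y'_k\times\mathrm{int}(\Delta^k))$.

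The step I expect to be the main obstacle is the purely combinatorial one: carefully justifying that the vertex set of the open simplex through a point is computed by the vertex maps applied to an \emph{arbitrary} representative and depends only on the support of the barycentric coordinate, and confirming that the degenerate simplices contained in $Y'_n$ do not enlarge the preimage beyond the claimed union. Once the identity is in hand the openness is immediate; in particular no induction on $n$ is needed.
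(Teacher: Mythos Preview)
Your argument is correct. The identity
\[
\phi_n^{-1}\big(\mathrm{St}(X_\bullet,Y_\bullet)\big)=\bigcup_{i=0}^{n} v_i^{-1}(Y_0)\times\{s\in\Delta^n \mid s_i>0\}
\]
does hold, and your verification via the face--degeneracy factorisation of an arbitrary representative is exactly the right bookkeeping: the equalities $v_{\alpha(j')}(x)=v_{\sigma(j')}(z)$, with $\alpha$ injective onto the support of $s$ and $\sigma$ surjective onto $[m]$, give precisely $\{v_i(x)\mid s_i>0\}=\{v_\ell(z)\mid \ell\in[m]\}$. Openness is then immediate.

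This is a genuinely cleaner route than the paper's. The paper sets up the same $Y'_n$ and the same description $\mathrm{St}=\bigcup_k\phi_k(Y'_k\times\mathrm{int}(\Delta^k))$, but then, rather than computing the full preimage $\phi_n^{-1}(\mathrm{St})$ explicitly, it fixes a point in the star and argues by induction on $n$ that any representative $(x,s)\in X_n\times\Delta^n$ is an interior point of the preimage: the inductive step produces a neighbourhood $V$ of $s$ in $\partial\Delta^n$ and extends it across the interior using $(\alpha^*)^{-1}Y'_k\subset Y'_n$. Your approach replaces this induction by a single combinatorial identity; the price is that you must invoke the full Eilenberg--Zilber decomposition (face followed by degeneracy) rather than only the injective $\alpha$ that the paper uses, but this is standard and you handle it correctly. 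The paper's inductive argument, by contrast, never needs to name the non-degenerate representative explicitly.
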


The open star construction gets better after subdivision. 
We remember that the subdivision $\Sd{}X_{\bullet}$ is
the nerve of the topological category of simplices of $X_{\bullet}$;
it has objects $([n],x)$ with $x\in X_{n}$ and a morphism from
$([n],x)$ to $([m],y)$ is a morphism $\alpha:[n]\to [m]$ with
$\alpha^{*}(y)=x$. 

Let $x\in \Sd{}X_{n}$ be
$x=([N_{0}]\to \dots \to [N_{n}],z\in X_{N_{n}})$. Its
$i^\text{th}$ vertex is $v_{i}(x)=([N_{i}],\beta^{*}(z))$, with
$\beta:[N_{i}]\to\dots\to [N_{n}]$. If
$v_{n}(x)=([N_{n}],z)\in \Sd{}Y_{0}$, i.e. if $z\in Y_{N_{n}}$ then
$x\in \Sd{}Y_{N_{n}}$. The pair 
$(Z_{\bullet},T_{\bullet})=(\Sd{}X_{\bullet},\Sd{}Y_{\bullet})$ thus has
the following

\vspace{1ex}
{\em Property I:}
A point $z\in Z_{n}$ is contained in $T_{n}$ if and only if
$v_{n}(z)\in T_{0}$.

\begin{remark}
Suppose that the pair $Z_{\bullet},T_{\bullet}$ has property I, and that
$z\in (\Sd{}Z)_{n}$ has the property that some vertex 
$v_{i}(z)\in (\Sd{}T)_{0}$. Then  
the last vertex $v_n(z)\in T_{0}\subset Z_{0}$.
\end{remark}

\begin{lemma}
\label{le:heopennbh}
Let $X_\bullet$ be a degreewise compactly generated simplicial space.
Suppose that for every $i$ the inclusion $Y_{i}\subset X_{i}$ 
is an open embedding, 
and that the pair $(X_\bullet,Y_\bullet)$ has Property I. Then,
the inclusion $\Realization{Y_\bullet}\subset \mathrm{St}(X_\bullet,Y_\bullet)$
is a homotopy equivalence.
\begin{proof}
We consider two natural maps associated to a subdivision of a simplicial
set $X_\bullet$. The first one is the the subdivision map
\[
s_{{X}}\colon \Realization{\Sd{}X_{\bullet}}\to \Realization{X_{\bullet}}.
\]
This is the unique map that is affine on simplices, and 
sends a vertex $a\in X_{n} \subset
Sd{}X_{0 }$ to the barycenter of the simplex represented by $a$ in
$\Realization{X_{\bullet}}$. The subdivision map is not a
simplicial map, but it is a homeomorphism
if $X_{\bullet}$ is assumed to be degreewise compactly generated. 
To see that $s_X^{-1}$ is continuous, consider for each $N$ 
the diagram 
\[
\xymatrix@C=3cm{
\coprod_{[N_{0}]\rightarrow \dots 
\rightarrow[N_{r-1}]
\rightarrow[N]} \Delta^{r}\times
X_{N}\ar[r]^-{\phi_{\Sd{}X}}\ar[d]^{f}
&\Realization{\Sd{}X_{\bullet}}
\ar[d]^{s_{X}}\\
\Delta^{N}\times X_N
\ar[r]^-{\phi_{X}}
\ar[ru]^{s_X^{-1}\phi_{X}}&
\Realization{X_{\bullet}}{\rlap{\ }}.  
}
\]
The vertical map $f:\Delta^r\times X_N\to \Delta^N\times X_n$,
associated with $[N_0]\to [N_1]\to \dots \to [N_r]$, ($N_r=N$), is the
product of the identity on $X_n$ and the following map  
$\Delta^r\to \Delta^N$: each $[N_i]\to [N_r]$ induces a 
simplicial $\Delta^{N_i}\xrightarrow{l_i} \Delta^{N_r}$;
let $b_i$ be the barycenter of the image $l_i(\Delta^{N_i})$.
Then $f:\Delta^r\to \Delta^N$ is the affine map that takes the
$i^\text{th}$ vertex in $\Delta^r$ to $b_i$.

The inverse of $s_{X}$ is continuous if and only if 
$(s_{X})^{-1}\phi_{X}$ is continuous. 
But this follows because if $X_N$ is compactly generated, 
then $f$ is an identification map (a surjection map,
where the target has the quotient topology), since
it is the product of a compact identification map with the identity
on a compactly generated space (\cite{Steenrod}).

The second map we consider is a ``first vertex map''. 
It is a simplicial map given in the following fashion. 
Let 
\[
[N_{0}] \xrightarrow{\alpha_{0}} [N_{1}] \xrightarrow{\alpha_{1}}
\dots \xrightarrow{\alpha_{n-1}}[N_{n}]
\] 
be an $n$-simplex in the subdivision.
The sequence determines a map in the simplicial category
$\alpha \colon [n]\to [N_{n}]$ by defining $\alpha(i)\in [N_{n}]$ to
be the image under the iterated maps of the first vertex
$0 \in [N_{n-i}]$, that is 
$\alpha(i)=\alpha_{n-1}\circ \alpha_{n-2}\circ\dots\circ
\alpha_{n-i}(0)$. The first vertex map is the continuous simplicial map
\[
L_{X}\colon  \Sd{}X_{\bullet} \to X_{\bullet}\rlap{ ,}\quad 
L_{X}(x,\alpha_{0},\alpha_{1},\dots,\alpha_{n-1}) =\alpha^{*}(x)\rlap{\ .}
\]

Let $z=(x,\alpha_{0},\alpha_{1},\dots, \alpha_{n-1})\in 
\mathrm{Sd}X_{\bullet}$.  
If $v_{i}(z)\in \mathrm{Sd}Y_{\bullet}$ for some $i$, then $L_{X}(z)\in Y_{n}$.
The condition means that 
$(\alpha_{n-1}\circ \dots \circ\alpha_{i})^{*}(x)\in Y_{N_{i}}$. 
Since 
$Y_{\bullet}$ is a simplicial subset, it follows that
$v_{n}(L_{X}(x))=(\alpha_{n-1}\circ\dots \circ \alpha_{0}\circ v_{0})^{*}(x)\in
Y_{0}$. From property I it follows that $L_{X}(x)\in Y_{n}$.
We conclude that
$\Realization{L_{X}}(\mathrm{St}(\Sd{}X_{\bullet},\Sd{}Y_{\bullet}))\subset
\Realization{\Sd{}Y_{\bullet}}$

The first vertex map does not induce a homeomorphism, but
the two maps $\Realization{L}, s_{X}\colon \Realization{\Sd{}X_{\bullet}}
\to\Realization{X_{\bullet}}$ are homotopic. Indeed, if 
$(z,t)\in \Sd{}X_{n}\times \Delta^{n}$ represents a point 
$\phi_{n}(z,t)\in \Realization{\Sd{}X_{\bullet}}$, then
$z$ is given by $x\in X_{N_{n}}$ and a sequence of 
morphisms $\{\alpha_{i}\}$
in the simplicial category. We compute
\begin{align*}
\Realization{L_{X}}(\phi_{n}(z,t))
&=\phi_{N_{n}}(x,\alpha_{*}t)\\
s_{X}(\phi_{n}(z,t))&=\phi_{N_{n}}(x,A(t)).
\end{align*}
where $A:\Delta^{n}\to \Delta^{N_{n}}$ is an affine map,
depending on the sequence of morphisms $\{\alpha_{i}\}$.
A homotopy $H_{s}$ from $\Realization{L_{X}}$ to $s_{X}$ is given by 
\[
H_{n}(\phi_{N_{n}}(z,t))=
\phi_{N_{n}}(x,((1-s)\alpha_{*}(t)+sA(t))).
\]

We can restrict the two maps from 
$\Realization{\Sd{}X_\bullet}$ to 
$\mathrm{St}(\Sd{}(X)_\bullet,\Sd{}(Y)_{\bullet})$.
When we do this for $L_X$ we get a commutative diagram

\[
\xymatrix@C=1,5cm{
\Realization{Y_{\bullet}}\ar[r]^{s_{Y}^{-1}}&
\Realization{\Sd{}Y_\bullet}\ar[r]^-{i}\ar[d]^{L_{Y}}&
\mathrm{St}(\Sd{}X_{\bullet},\Sd{}Y_{\bullet})
\ar@{}[r]|-{\displaystyle\subseteq}\ar[d]^{L_X}&
\Realization{\Sd{}X_\bullet}\ar[d]^{L_{X}}\\
&
\Realization{Y_\bullet}\ar@{=}[r] &
\Realization{Y_\bullet}\ar[r] &
\Realization{X_\bullet}\rlap{ ,}
}
\]
and it suffices to show that 
the map $i\circ s_{Y}^{-1}: \Realization{Y_{\bullet}}\to
\mathrm{St}(\Sd{}X_{\bullet},\Sd{}Y_{\bullet})$ has the homotopy
inverse $L_{X}:\mathrm{St}(\Sd{}X_{\bullet},\Sd{}Y_{\bullet})\to
\Realization{Y_{\bullet}}$.

The first vertex map $L_{Y}$ is homotopic to $s_{Y}$, so
that the composition
\[
\Realization{Y_{\bullet}}\xrightarrow{s_{Y}^{-1}}
\Realization{\Sd{}Y_{\bullet}}\xrightarrow{i}
\mathrm{St}(\Sd{}X_{\bullet},\Sd{}Y_{\bullet})\xrightarrow{L_{X}}
\Realization{Y_{\bullet}}
\]
is homotopic to the identity. 
Composing the other way, we obtain
a commutative diagram 
\[
\xymatrix{
\mathrm{St}(\Sd{}X_{\bullet},\Sd{}Y_{\bullet})
\ar[r]^-{L_{X}}
\ar@{}[d]|-{\displaystyle\bigcap}&
\Realization{Y_{\bullet}}
\ar[r]^-{s_{Y}^{-1}}
\ar@{}[d]|-{\displaystyle\bigcap}&
\Realization{\Sd{}Y_{\bullet}}
\ar[r]^-{i}
\ar@{}[d]|-{\displaystyle\bigcap}&
\mathrm{St}(\Sd{}X_{\bullet},\Sd{}Y_{\bullet})
\ar@{}[d]|-{\displaystyle\bigcap}
\\
\Realization{\Sd{}X_{\bullet}}\ar[r]^-{L_X}&
\Realization{X_{\bullet}}\ar[r]^-{s_X^{-1}}&
\Realization{\Sd{}X_{\bullet}}\ar@{=}[r]&
\Realization{\Sd{}X_{\bullet}}.
}
\]
The composite of the lower row $s_{X}^{-1}L_{X}$ is homotopic
to the identity by the homotopy 
$s_{X}^{-1}H_{s}$. To finish the proof of the lemma, we have to
argue that this homotopy preserves the subspace
$\mathrm{St}(\Sd{}X_{\bullet},\Sd{}Y_{\bullet})$.
This is equivalent to the statement that if 
$(z,t)\in \Sd{}X_{n}\times \mathrm{int}(\Delta^{n})$ 
represents a point 
$\phi_{n}(z,t)\in \mathrm{St}(\Sd{}X_{\bullet},\Sd{}Y_{\bullet})\subset
\Realization{X_{\bullet}}$, then
\[
H_{s}(\phi_{n}(z,t))\in
s_{X}\mathrm{St}(\Sd{}X_{\bullet},\Sd{}Y_{\bullet})
\subset \Realization{X_{\bullet}}. 
\]

We examine $\phi_{n}^{-1}(\Realization{Y_{\bullet}})$ 
and $\phi_{n}^{-1}(\mathrm{St}(\Sd{}X_{\bullet},\Sd{}Y_{\bullet}))$.
We are assuming property I, so for a fixed $x\in X_{N}$,  
there will be some $k$, $-1\leq k\leq n$ such that
$v_{i}(x)\in \Sd{}Y_{0}$ for $i \leq k$, and
$v_{i}(x)\not\in \Sd{}Y_{0}$ for $i > k$.
This means that $\{t\in \Delta^{n}\mid (x,t)\in
\phi_{n}^{-1}(\Realization{Y_{\bullet}})\}$ will be the 
convex span of the vertices $\{v_{i}\mid 0 \leq i \leq k\}$,
which is either the empty set (in case $k=-1$), 
or a sub-simplex of $\Delta^{n}$.

It follows that
$(x,t)\in
\phi_{N}^{-1}(\mathrm{St}(\Sd{}X_{\bullet},\Sd{}Y_{\bullet}))$
if and only if there is an $i\leq k$ such that $t_{i}>t_{j}$
for $k < j \leq n$.

Let $(z,t)\in (\Sd{}X_{n}\times \mathrm{int}(\Delta^{n}))$ 
represent a point in
$\mathrm{St}(\Sd{}X_{\bullet},\Sd{}Y_{\bullet})\subset
\Realization{X_{\bullet}}$.
If $z\in \Sd{}X_{r}$ is represented by
$[N_{0}] \xrightarrow{\alpha_{0}} \dots \xrightarrow{\alpha_{n-1}} [N_{n}]$ 
together with $x\in X_{N_{n}}$, 
then  the image of $(z,t)$ under the homotopy is represented by a line
segment in $\{x\} \times \Delta^{N_{n}}\subset X_{N_{n}}\times
\Delta^{N_{n}}$ which connects $(x,\alpha_{*}(t))$ to
$(x,A(t))$, where $s_{X}(\phi_{n}(z,t))=(x,A(t))$. By definition,  
$(x,A(t))\in
s_{X}\mathrm{St}(\Sd{}X_{\bullet},\Sd{}Y_{\bullet})$,
so there exists some $i\leq k$ such that 
$(\alpha_{*}(t))_{i}>(\alpha_{*}(t))_{j}$ for all $j > k$.
We also know that $L_{X}(\phi_{n}(z,t))\in \Realization{Y_{\bullet}}$,
so $A(t)_{j}=0$ for $j<k$. It follows that any convex combination 
$u=(1-s)\alpha_{*}(t)+sA(t)$ with $s<1$ also satisfies that
$u_{i}>u_{j}$ for all $j > k$, so that 
\[
H_{s}(\phi_{n}(z,t))=\phi_{N_{n}}(x,u)\in
s_{X}\mathrm{St}(\Sd{}X_{\bullet},\Sd{}Y_{\bullet})
\]  
for $0\leq s \leq 1$.
\end{proof}
\end{lemma}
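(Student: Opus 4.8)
The plan is to reduce the statement to a geometric deformation retraction after one barycentric subdivision, where the open star becomes a manageable object. One cannot retract $\mathrm{St}(X_\bullet,Y_\bullet)$ directly: the realization of a degreewise open inclusion need not be open, and the naive recipe ``push the barycentric coordinates attached to non-$Y$ vertices to zero'' has coefficients that jump as a point $x$ crosses the boundary of the open set $v_i^{-1}(Y_0)$. Subdividing once repairs this, because the combinatorial assertion ``the first vertex map sends a flag touching $\Sd Y_\bullet$ into $Y_\bullet$'' is discontinuity-free, and this is exactly what Property~I is there to supply.

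First I would set up the two standard maps attached to a subdivision. The subdivision map $s_X\colon\Realization{\Sd X_\bullet}\to\Realization{X_\bullet}$ is affine on simplices and sends a vertex $a\in X_n\subset\Sd X_0$ to the barycentre of the simplex it represents; it is a homeomorphism, and this is the one place degreewise compact generation is needed — inverting $s_X$ amounts to showing that for each $N$ the map $\coprod\Delta^r\times X_N\to\Delta^N\times X_N$ glued from the flags ending at $[N]$ is an identification map, which holds since it is the product of a compact identification map with the identity on a compactly generated space. The first vertex map $L_X\colon\Sd X_\bullet\to X_\bullet$, by contrast, is an honest simplicial map: it sends a flag $[N_0]\xrightarrow{\alpha_0}\dots\xrightarrow{\alpha_{n-1}}[N_n]$ together with $x\in X_{N_n}$ to $\alpha^*(x)$, where $\alpha(i)$ records the iterated image of the first vertex of $[N_{n-i}]$. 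Both $\Realization{L_X}$ and $s_X$ send a representative $(z,t)\in\Sd X_n\times\Delta^n$ carrying $x\in X_{N_n}$ to $\phi_{N_n}(x,-)$ evaluated on an affine function of $t$, so the straight-line interpolation $H_s$ between those affine functions is a homotopy $\Realization{L_X}\simeq s_X$, and the same formula restricts to give $\Realization{L_Y}\simeq s_Y$ on $\Sd Y_\bullet$.

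Next comes the core of the proof: $L_X$ retracts the open star. If a vertex $v_i(z)$ of $z=(x,\alpha_0,\dots,\alpha_{n-1})$ lies in $\Sd Y_0$, i.e. $(\alpha_{n-1}\cdots\alpha_i)^*(x)\in Y_{N_i}$, then since $Y_\bullet$ is a simplicial subspace the last vertex of $L_X(z)$ lies in $Y_0$, and Property~I upgrades this to $L_X(z)\in Y_n$. Hence $\Realization{L_X}$ carries $\mathrm{St}(\Sd X_\bullet,\Sd Y_\bullet)$ into $\Realization{Y_\bullet}$, and trivially $\Realization{Y_\bullet}=\Realization{\Sd Y_\bullet}$ lies in that open star. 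Comparing $\Realization{L_X}$ with the inclusion composite $i\circ s_Y^{-1}\colon\Realization{Y_\bullet}\to\mathrm{St}(\Sd X_\bullet,\Sd Y_\bullet)$ and using $\Realization{L_Y}\simeq s_Y$ shows the two maps are mutually homotopy inverse, so $\Realization{Y_\bullet}$ is a deformation retract of the open star; carried across the homeomorphism $s_X$, this is the neighbourhood of $\Realization{Y_\bullet}$ appearing in the statement.

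The step I expect to be the real work is checking that the interpolation $H_s$ (equivalently $s_X^{-1}H_s$) preserves the open star, i.e. that for $(z,t)$ representing a point of $\mathrm{St}(\Sd X_\bullet,\Sd Y_\bullet)$ the entire segment from $\Realization{L_X}(z,t)$ to $s_X(z,t)$ stays inside it. This is barycentric-coordinate bookkeeping: for a fixed $x\in X_N$ whose vertices in $Y_0$ carry the indices $\le k$, membership in (the image of) the star is the inequality $t_i>t_j$ for some $i\le k$ and all $j>k$; the information that $\Realization{L_X}(z,t)\in\Realization{Y_\bullet}$ forces $A(t)_j=0$ for $j<k$; and with these two facts one checks that the inequality survives every convex combination $(1-s)\alpha_*(t)+sA(t)$. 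Keeping this inequality valid uniformly in $s$, and reconciling it with the open-star condition written out in the unsubdivided complex, is the delicate point.
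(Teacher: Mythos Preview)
Your proposal is correct and follows essentially the same route as the paper: introduce the subdivision homeomorphism $s_X$ and the first vertex map $L_X$, use Property~I to show $\Realization{L_X}$ carries $\mathrm{St}(\Sd X_\bullet,\Sd Y_\bullet)$ into $\Realization{Y_\bullet}$, exhibit $L_X$ and $i\circ s_Y^{-1}$ as mutual homotopy inverses via the straight-line homotopy $H_s$ between $\Realization{L_X}$ and $s_X$, and then do the barycentric-coordinate check that $H_s$ preserves the star. Your identification of the last step as the delicate point, and your sketch of the inequality $t_i>t_j$ surviving convex combination because $A(t)$ vanishes on the non-$Y$ side, is exactly the paper's argument.
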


Let $\Sd^2X_\bullet=\Sd {}(\Sd {} X_{\bullet})$ be the second barycentric subdivision. The following theorem is an immediate consequence of our work.

\begin{theorem}
\label{th:secondDerived}
 $\Realization{\mathrm{St}(\Sd^{2}X_{\bullet},\Sd^{2}Y_{\bullet})}$ is an open set in  
$\Realization{\Sd^{2}X_{\bullet}}$.
It contains $\Realization{(\Sd^{2}Y)_{\bullet}}$, and the inclusion of this subspace 
in $\Realization{\mathrm{St}(\Sd^{2}X_{\bullet},\Sd^{2}Y_{\bullet})}$ is a weak
homotopy equivalence.
\begin{proof}
By lemma~\ref{le:open} the star is an open subset.  
We have checked that $\Sd{}X_{\bullet}$ has property I, so the
theorem
follows from  lemma~\ref{le:heopennbh}.
\end{proof}
\end{theorem}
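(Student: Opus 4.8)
The plan is to deduce Theorem~\ref{th:secondDerived} directly from the two lemmas already established, using that subdivision behaves well under iteration. First I would recall that $\Sd^2 X_\bullet = \Sd{}(\Sd{}X_\bullet)$ and similarly $\Sd^2 Y_\bullet = \Sd{}(\Sd{}Y_\bullet)$, and that $\Sd{}$ takes a degreewise compactly generated simplicial space to another one (the simplices of $X_\bullet$ are indexed by the various $X_n$, each of which is compactly generated, and $\Sd{}X_n$ is built as a disjoint union of products of these with discrete data, hence remains compactly generated). Likewise, $\Sd{}$ preserves degreewise open embeddings: if $Y_n \subset X_n$ is open for all $n$, then an $n$-simplex of $\Sd{}X_\bullet$ is a chain $[N_0]\to\cdots\to[N_n]$ together with $x \in X_{N_n}$, and the corresponding simplices of $\Sd{}Y_\bullet$ are exactly those with $x \in Y_{N_n}$, so $(\Sd{}Y)_n$ is an open subset of $(\Sd{}X)_n$.

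Next I would invoke the observation recorded just before Lemma~\ref{le:heopennbh} (and the displayed \emph{Property~I}): the pair $(\Sd{}X_\bullet, \Sd{}Y_\bullet)$ has Property~I, meaning $z \in (\Sd{}X)_n$ lies in $(\Sd{}Y)_n$ precisely when its last vertex $v_n(z)$ lies in $(\Sd{}Y)_0$. This is the content of the paragraph establishing that $(Z_\bullet,T_\bullet)=(\Sd{}X_\bullet,\Sd{}Y_\bullet)$ satisfies Property~I. Therefore the hypotheses of Lemma~\ref{le:heopennbh} are met with $X_\bullet$ replaced by $\Sd{}X_\bullet$ and $Y_\bullet$ replaced by $\Sd{}Y_\bullet$: namely $\Sd{}X_\bullet$ is degreewise compactly generated, each $(\Sd{}Y)_i \subset (\Sd{}X)_i$ is an open embedding, and the pair has Property~I.

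With these verifications in place the theorem is essentially immediate. Lemma~\ref{le:open}, applied to the pair $(\Sd^2 X_\bullet, \Sd^2 Y_\bullet)$ — whose degreewise openness we just checked — shows that $\mathrm{St}(\Sd^2 X_\bullet, \Sd^2 Y_\bullet)$ is an open subset of $\Realization{\Sd^2 X_\bullet}$, and it evidently contains $\Realization{\Sd^2 Y_\bullet}$ since every open simplex of $\Realization{\Sd^2 Y_\bullet}$ has all its vertices in $(\Sd^2 Y)_0$, in particular at least one. Then Lemma~\ref{le:heopennbh}, applied with $\Sd{}X_\bullet$ in the role of $X_\bullet$, asserts exactly that the inclusion $\Realization{\Sd{}(\Sd{}Y)_\bullet} \subset \mathrm{St}(\Sd{}(\Sd{}X)_\bullet, \Sd{}(\Sd{}Y)_\bullet)$ — that is, $\Realization{\Sd^2 Y_\bullet} \subset \mathrm{St}(\Sd^2 X_\bullet, \Sd^2 Y_\bullet)$ — is a homotopy equivalence.

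The only genuine work, and hence the main obstacle, is confirming that $\Sd{}$ preserves the three structural hypotheses — degreewise compact generation, degreewise openness of the inclusion, and the passage to Property~I — since the rest is a formal appeal to the two lemmas. The compact-generation point is where one must be slightly careful: one needs that a disjoint union of products $\Delta\text{-data} \times X_{N_n}$ indexed by chains is again compactly generated, which holds because compactly generated spaces are closed under disjoint unions and under products with locally compact (indeed discrete or finite simplicial) factors, exactly as used in the continuity argument for $s_X^{-1}$ in the proof of Lemma~\ref{le:heopennbh}. Once that is noted, everything else is bookkeeping about chains of ordinal maps and their last vertices.
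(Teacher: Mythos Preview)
Your proposal is correct and follows essentially the same route as the paper: verify that the pair $(\Sd X_\bullet,\Sd Y_\bullet)$ satisfies Property~I and the degreewise hypotheses, then invoke Lemma~\ref{le:open} for openness and Lemma~\ref{le:heopennbh} (applied to that subdivided pair) for the homotopy equivalence. You are simply more explicit than the paper in checking that $\Sd$ preserves degreewise compact generation and degreewise open inclusions, which the paper leaves implicit.
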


We conclude the section with an application of 
theorem~\ref{th:secondDerived}.

Let $f:X_{\bullet}\to Y_{\bullet}$ be a map of simplicial, degreewise
metrizable spaces.
Suppose that $X^{\alpha}_{\bullet}\subset X_{\bullet}$ 
and $Y^{\alpha}_{\bullet}\subset Y_{\bullet}$ are 
families of degreewise open subspaces, indexed by the 
same set $A$ such that 
$f(\Realization{X_{\bullet}^{\alpha}}) \subset 
\Realization{Y_{\bullet}^{\alpha}}$ 
for $\alpha\in A$ and $X_\bullet=\cup X_\bullet^ \alpha$,
$Y_\bullet=\cup Y_\bullet^ \alpha$. 

\begin{theorem}
\label{th:opencover}
Suppose that for each finite subset $I\subset A$ 
the restriction
\[
f \colon \bigcup_{\alpha\in I}\Realization{X_{\bullet}^{\alpha}} \subset 
\bigcup_{\alpha \in I}\Realization{Y_{\bullet}^{\alpha}}
\]
is a weak homotopy equivalence.
Then
 $f:\Realization{X_{\bullet}}\to \Realization{Y_{\bullet}}$ 
is a weak homotopy equivalence.
\begin{proof}
We do a double subdivision, and prove that 
$f:\Realization{\Sd^{2}(X_{\bullet})}\to 
\Realization{\Sd^{2}(Y_{\bullet})}$ is a weak homotopy
equivalence. 

By the assumption and by theorem~\ref{th:secondDerived}
$\{\mathrm{St}(\Sd^{2}Y_{\bullet},\Sd^{2}Y^{\alpha}_{\bullet})\}_{\alpha}$
is an open cover of $\Realization{\Sd^{2}Y_{\bullet}}$.

Let $g:A \to \Realization{\Sd{}^2Y_{\bullet}}$ 
be a map from a finite CW complex. We show that this map factors up to
homotopy over $f$. By compactness, it's image is contained in a finite union
\[
\bigcup_{i}\{\mathrm{St}(\Sd{}^2Y_{\bullet},\Sd{}^2Y^{\alpha_{i}}_{\bullet})\}.
\]   
The operation of forming the star is compatible with taking union of
simplicial subspaces, so this is the same as the subspace
\[
\mathrm{St}(\Sd{}^2Y_{\bullet},\Sd{}^2(\cup_{i}Y^{\alpha_{i}})_{\bullet}).
\]
According to theorem~\ref{th:secondDerived} 
this space is weakly homotopy equivalent to 
$\Realization{\Sd{}(\Sd{}(\cup_{i}Y^{\alpha_{i}}_{\bullet}))}$, 
so $f$ factors up to homotopy over the inclusion of this subspace.
But then, by assumption, $f$ factors up to homotopy over
$\Realization{\Sd{}(\Sd{}(\cup_{i}X^{\alpha_{i}}_{\bullet}))}$. It follows that
the map of homotopy classes
\[
f_{*}\colon [A,\Realization{X_{\bullet}}]
\to[A,\Realization{Y_{\bullet}}]. 
\]  
is surjective.
A relative argument proves that the map is injective.
\end{proof}
\end{theorem}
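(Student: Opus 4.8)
The plan is to reduce everything to the doubly subdivided simplicial spaces, where Theorem~\ref{th:secondDerived} upgrades the degreewise-open families $X^\alpha_\bullet$, $Y^\alpha_\bullet$ into genuine open covers of the realizations, and then to run the standard ``weak equivalence from a good open cover'' argument.

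First I would replace $(X_\bullet,Y_\bullet)$ by $(\Sd^2X_\bullet,\Sd^2Y_\bullet)$. Since the spaces are degreewise metrizable, hence degreewise compactly generated, the subdivision maps $s_X\colon\Realization{\Sd^2X_\bullet}\to\Realization{X_\bullet}$ and $s_Y$ are homeomorphisms (cf.\ the proof of Lemma~\ref{le:heopennbh}), so it suffices to show that $\Realization{\Sd^2f}$ is a weak homotopy equivalence. One checks along the way that $\Sd$ carries a simplicial subspace to a simplicial subspace and commutes with degreewise unions --- a chain of simplices lies in $\Sd Y^\alpha_\bullet$ as soon as its top simplex does, because $Y^\alpha_\bullet$ is closed under faces --- so the families $\Sd^2X^\alpha_\bullet$ and $\Sd^2Y^\alpha_\bullet$ still cover degreewise; moreover $\Sd^2f$ carries $\Sd^2X^\alpha_\bullet$ into $\Sd^2Y^\alpha_\bullet$ degreewise (this follows from the hypothesis $f(\Realization{X^\alpha_\bullet})\subset\Realization{Y^\alpha_\bullet}$ by passing to non-degenerate representatives, using that the $Y^\alpha_\bullet$ are simplicial subspaces).

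Next, put $U^\alpha=\mathrm{St}(\Sd^2Y_\bullet,\Sd^2Y^\alpha_\bullet)$ and $V^\alpha=\mathrm{St}(\Sd^2X_\bullet,\Sd^2X^\alpha_\bullet)$. By Lemma~\ref{le:open} these are open, by the degreewise covering property they cover $\Realization{\Sd^2Y_\bullet}$ resp.\ $\Realization{\Sd^2X_\bullet}$, and $\Realization{\Sd^2f}$ maps $V^\alpha$ into $U^\alpha$ (a simplicial map carries the open star of a sub-object into the open star of its image). The crucial point is that forming the star commutes with unions of simplicial subspaces, so for a finite $I\subset A$
\[
\bigcup_{\alpha\in I}U^\alpha
 = \mathrm{St}\bigl(\Sd^2Y_\bullet,\ \Sd^2(\textstyle\bigcup_{\alpha\in I}Y^\alpha_\bullet)\bigr),
\]
and Theorem~\ref{th:secondDerived}, applied to the pair $\bigcup_{\alpha\in I}Y^\alpha_\bullet\subset Y_\bullet$, together with the subdivision homeomorphism, identifies this up to weak equivalence with $\Realization{\bigcup_{\alpha\in I}Y^\alpha_\bullet}$; similarly for the $V^\alpha$. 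Hence the hypothesis gives that $\Realization{\Sd^2f}\colon\bigcup_{\alpha\in I}V^\alpha\to\bigcup_{\alpha\in I}U^\alpha$ is a weak homotopy equivalence for every finite $I$.

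Finally I would deduce the global statement by compactness. For surjectivity of $f_\ast$ on homotopy classes out of a finite CW complex $A$: a map $g\colon A\to\Realization{\Sd^2Y_\bullet}$ has compact image, hence lands in some finite union $\bigcup_{i}U^{\alpha_i}$, over which $\Realization{\Sd^2f}$ is a weak equivalence, so $g$ factors up to homotopy through $\Realization{\Sd^2X_\bullet}$. The relative version, applied to a homotopy $A\times I\to\Realization{\Sd^2Y_\bullet}$ together with chosen lifts of its two ends --- whose total image again lies in a single finite union --- yields injectivity. Since $f_\ast$ is then bijective on $[A,-]$ for all finite CW complexes $A$, and relatively so, $f$ is a weak homotopy equivalence. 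The step I expect to be the main obstacle is the relative (injectivity) argument: one must arrange that the homotopy itself, not merely its endpoints, is confined to one finite union $\bigcup_iU^{\alpha_i}$ and then pull the factorization back to $X$ compatibly with the prescribed behaviour on $A\times\partial I$. This is exactly the standard mechanism for producing a weak equivalence out of an open cover, here made to work for realizations of simplicial spaces via the second derived neighbourhood of Theorem~\ref{th:secondDerived}.
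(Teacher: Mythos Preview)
Your proposal is correct and follows essentially the same route as the paper: pass to the double subdivision, use Theorem~\ref{th:secondDerived} to replace the degreewise-open families by the genuine open covers $\mathrm{St}(\Sd^2Y_\bullet,\Sd^2Y^\alpha_\bullet)$, exploit that stars commute with finite unions to reduce via compactness to a single finite $I\subset A$, and then invoke the hypothesis together with a relative version for injectivity. You supply a few details the paper leaves implicit (the parallel open cover $V^\alpha$ on the $X$-side, and why $\Sd^2$ preserves the covering hypotheses), but the architecture is the same.
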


\subsection{A lemma in homotopy theory.}
\label{sec:an-elementary-lemma}

Suppose that $f\colon X \to Y$ is a map with the
property that point inverses are contractible. In many cases, this
implies that $f$ is a homotopy equivalence. For instance, if
$f$ is proper and $X$ and $Y$ satisfy very general conditions,
this is proved by Smale (\cite{Smale}).
However, the statement is not true without topological assumptions 
on $f$. A simple counter example is given by
$X=[0,1) \cup \{2\} \subset \RR$, $Y=[0,1]$, $f(2)=1$ and
$f(x)=x$ for $x\not=1$. We want to give a set of conditions
that ensures homotopy equivalence in some non-proper cases. 
Suppose that $X$ is a finite polyhedron (that is, the realization of
a finite simplicial set), $Y$ a
topological space and $U \subset X \times Y$ an \emph{open} set. 
Let
$\pi_{X} \colon X\times Y \to X$,
$\pi_{Y} \colon X\times Y \to Y$
 be the projections. 

\begin{lemma}
\label{le:elementary}
Assume for each $x\in X$ that 
$\pi_Y:U\cap \pi^{-1}\{x\}\to Y$  is a weak homotopy
equivalence. Then the inclusion of $U$ in $X\times Y$ is a weak
homotopy equivalence.
\end{lemma}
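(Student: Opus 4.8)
We are given a finite polyhedron $X$, a space $Y$, and an open set $U\subset X\times Y$ such that for each $x\in X$ the fibre $U\cap\pi_X^{-1}(x)$ maps by weak homotopy equivalence to $Y$ under $\pi_Y$; we must show $U\hookrightarrow X\times Y$ is a weak equivalence.

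\begin{proof}
The plan is to reduce this to the fibrewise statement by an induction over the cells of $X$, using that $U$ is open. Fix a finite simplicial structure on $X$. I will prove the more refined statement: for every subcomplex $L\subset X$, the inclusion $U\cap(\Realization{L}\times Y)\hookrightarrow \Realization{L}\times Y$ is a weak homotopy equivalence, proceeding by induction on the number of simplices of $L$. The base case $L=\emptyset$ is trivial. For the inductive step, write $\Realization{L}=\Realization{L'}\cup_{\partial\Delta^p}\Delta^p$ where $\Delta^p$ is a top-dimensional simplex of $L$ and $L'$ is the complement subcomplex, so $\Realization{L'}\cap\Delta^p=\partial\Delta^p$. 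Intersecting with $U$ and with $\times Y$, we get a pushout square of spaces along the closed cofibration $\partial\Delta^p\times Y\hookrightarrow \Delta^p\times Y$ (and likewise for the $U$-versions, which are open subsets of these). By the inductive hypothesis the map is a weak equivalence over $\Realization{L'}$ and over $\partial\Delta^p$ (a subcomplex with fewer simplices); if I also know it over $\Delta^p$ itself, a Mayer--Vietoris / gluing argument for homotopy pushouts (as used in Claim 2 of the proof of Theorem \ref{th:notopology}: the realization of a degreewise homotopy pushout is a homotopy pushout, and a union of two opens along a cofibration is a homotopy pushout) gives the statement over $\Realization{L}$.

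So the crux is the case $L=\Delta^p$, i.e. $X$ a single simplex, which I treat as the genuinely new input. Here I want to compare the open set $U\subset\Delta^p\times Y$ with $\{v\}\times Y$ for a chosen vertex $v$, using that $\Delta^p$ deformation retracts to $v$. The naive idea $-$ push the retraction into $U$ $-$ fails because $U$ is only open, not saturated; this is exactly the phenomenon of the counterexample $X=[0,1)\cup\{2\}$ cited in the text. Instead I would argue as follows. Since $U$ is open in $\Delta^p\times Y$ and $\Delta^p$ is compact, for each $y\in Y$ there is an open $V_y\ni y$ in $Y$ and... no: the fibre over a point of $\Delta^p$ need not be all of $Y$, so one cannot expect a product neighbourhood. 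The correct move is to use a subdivision/partition-of-unity argument on $\Delta^p$: cover $\Delta^p$ by finitely many convex opens $P_1,\dots,P_N$ small enough that $U\cap(P_i\times Y)$ is, for each $i$, weakly equivalent to its fibre over any point of $P_i$ (possible since $U$ open and $P_i$ can be taken inside a single simplex of a fine subdivision, where one shows the inclusion of the fibre into $U\cap(P_i\times Y)$ is a weak equivalence by a straight-line homotopy in $P_i$ that stays inside $U$ on compact sets $-$ here openness of $U$ plus compactness of the relevant image of a map from a finite CW complex is used, exactly as in the compactness arguments of \S\ref{sec:critical-pairs-cuts}). Then an induction on $N$, gluing the $U\cap(P_i\times Y)$ along the $U\cap(P_i\cap P_j\times Y)$ as homotopy pushouts, reduces to the single-chart case, which is the fibrewise hypothesis. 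Combining, $U\cap(\Delta^p\times Y)\to\Delta^p\times Y$ is a weak equivalence, completing the induction and the proof.

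The main obstacle is precisely this last local step: turning the pointwise (fibrewise) weak equivalence hypothesis into a statement over a small open patch $P_i\times Y$ of $X\times Y$. One must produce, for a map from a finite CW complex $K$ into $U\cap(P_i\times Y)$ or into $P_i\times Y$, a homotopy sliding the $P_i$-coordinate to a fixed point of $P_i$ while remaining inside the open set $U$; compactness of the image of $K$ lets one choose a uniform ``distance to the complement of $U$'', and convexity of $P_i$ lets the straight-line homotopy work. Everything else $-$ the cellular induction over $X$ and the induction over the finite cover of $P_i$ $-$ is a routine homotopy-pushout gluing of the same kind already carried out in the proof of Theorem~\ref{th:notopology}, using degreewise openness and normality (here metrizability of the relevant spaces, or simply that we are gluing opens in the metrizable $X\times Y$, guarantees the pushouts are homotopy pushouts). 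Finally, injectivity of $f_*$ on homotopy classes follows from the surjectivity statement applied to the pair, exactly as in the proof of Theorem~\ref{th:opencover}.
\end{proof}
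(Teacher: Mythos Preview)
Your outline has a genuine gap at precisely the step you flag as ``the main obstacle''. You claim one can choose a finite cover of $\Delta^p$ by convex opens $P_i$ such that, for each $i$, the inclusion $U_{x_0}\hookrightarrow U\cap(P_i\times Y)$ is a weak equivalence via a straight-line homotopy in the $P_i$-coordinate. But the straight-line homotopy $h_t(x,y)=((1-t)x+tx_0,y)$ need not stay in $U$: there is no reason even the endpoint $(x_0,y)$ lies in $U$ when $(x,y)$ does. Your fix --- use compactness of the image of a test map $K\to U\cap(P_i\times Y)$ to find a uniform distance $\epsilon$ to $\partial U$, and then the straight-line homotopy works if $\mathrm{diam}(P_i)<\epsilon$ --- makes $\epsilon$, and hence the required fineness of the cover, depend on the test map. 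So you cannot fix the cover $\{P_i\}$ in advance and then run an ``induction on $N$'' gluing the pieces $U\cap(P_i\times Y)$; the statement you are inducting on is not actually established for any fixed $P_i$. This is circular: the local step over $P_i$ is the original lemma with $X$ replaced by $P_i$.

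The paper's proof is organised differently and avoids this trap. Rather than trying to show $U$ is locally a product up to weak equivalence, it fixes a test pair $f:(Q,P)\to(X\times Y,U)$ from the outset and lets the subdivision of $X$ depend on $f$. For each $x\in X$ one uses the fibrewise hypothesis to produce a homotopy $H_x$ of $f\vert_{Q_x}$ into $U_x$, extends $H_x$ to all of $Q$ (fibrewise over $X$, rel $P$), and then uses compactness of $Q$ and openness of $U$ to find a neighbourhood $V_x\ni x$ over which the end of $H_x$ already lands neatly in $U$. Covering $X$ by finitely many such $V_x$ and subdividing so that each simplex lies in some $V_x$ gives the local homotopies $H_\alpha$. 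These are then damped to be constant over $\partial\Delta_\alpha$, glued, and an induction on skeleta of $X$ (pushing the complement of small inner subsimplices onto $X^{n-1}$) finishes. The essential point is that the neighbourhoods $V_x$ --- hence the whole combinatorial skeleton induction --- are chosen \emph{after} the test map $f$, which is exactly what your approach tries to avoid and cannot.

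A secondary issue: in your outer induction you assert that intersecting the pushout $|L|=|L'|\cup_{\partial\Delta^p}\Delta^p$ with $U$ yields a homotopy pushout. The standard retraction witnessing that $\partial\Delta^p\times Y\hookrightarrow\Delta^p\times Y$ is a cofibration moves the $\Delta^p$-coordinate, so it need not preserve the open set $U$; this step also needs an argument (e.g.\ thickening to genuine open pieces), which you have not supplied.
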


Before we embark on the proof, we note that in combination with
theorem~\ref{th:secondDerived} the lemma leads to the following
conclusion:

\begin{theorem}
\label{th:he-second-subdivision}
Let $X$ be a finite polyhedron and $N_{\bullet}$ a degreewise compactly
generated simplicial space. Let $K_\bullet$ be a simplicial space and
$K_\bullet \subset X \times N_{\bullet}$ a degreewise open
subspace. Let $\pi_X:K_{\bullet} \to X$ be the projection. Assume 
for each $x\in X$ that the fiber
$\vert\pi^{-1}_{X}(x)\vert\subset \vert K_\bullet\vert$ is
contractible. Then $\pi_{X}:\vert K_{\bullet}\vert \to X$ is a
homotopy equivalence.
\begin{proof}
Let $\mathrm{St}(\Sd^{2}(X\times N_{\bullet},\Sd^2 K_{\bullet})\subset X\times \vert
N_{\bullet}\vert$
be the open star in the second subdivision, and consider the diagram
\[
\xymatrix
{ 
\vert K_{\bullet}\vert\ar[r]^(.27){\simeq}\ar[dr]^{\pi_{X}}
&\mathrm{St}(\Sd^{2}(X\times N_{\bullet},\Sd^2 K_{\bullet})\ar[r]\ar[d]_{\pi_{\mathrm{St}}}
& X\times \vert N_{\bullet}\vert\ar[dl]_{\mathrm{pr}_{X}}\\
& X &
}
\]
According to theorem~\ref{th:secondDerived} the first horizontal
map is a homotopy equivalence, and since the star is open
it suffices to check that
$\pi^{-1}(x)$ is contractible. But
$\pi^{-1}_{\mathrm{St}}(x)$ is the star of the second derived
neighbourhood of $\pi^{-1}_{X}(x)$ in the simplicial space 
$\{x\}\times \lvert N_{\bullet}\rvert$, and hence contractible by the
assumption.
\end{proof}
\end{theorem}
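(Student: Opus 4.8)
The plan is to exhibit the projection $\pi_X\colon U\to X$ as a microfibration and then to compare it with the genuine fibration $\mathrm{pr}_X\colon X\times Y\to X$, with respect to which the inclusion $\iota\colon U\hookrightarrow X\times Y$ is a map over $X$ that is a weak equivalence on every fibre (that is the hypothesis).

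First I would check that $\pi_X\colon U\to X$ is a microfibration. Given a test space $A$, a homotopy $h\colon A\times I\to X$ and a lift $\tilde h_0\colon A\to U$ of $h_0$, write $\tilde h_0(a)=(h_0(a),g(a))$ with $g\colon A\to Y$. Since $U$ is open, each point $(h_0(a),g(a))$ has a product neighbourhood $V_a\times W_a\subset U$, and continuity of $h$ and $g$ produces a neighbourhood $A_a\times[0,\varepsilon_a)$ of $(a,0)$ with $h(A_a\times[0,\varepsilon_a))\subset V_a$ and $g(A_a)\subset W_a$; then $\tilde h(a',t)=(h(a',t),g(a'))$ stays in $U$ on $A_a\times[0,\varepsilon_a)$. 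These local formulas agree on overlaps, so they glue to a lift on an open neighbourhood of $A\times\{0\}$, which for compact $A$ contains $A\times[0,\varepsilon]$ for a uniform $\varepsilon>0$. Thus $\pi_X$ is a microfibration, and the same recipe shows its restriction over any open subset of $X$ is a microfibration.

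Next I would use the standard fact that for a microfibration the inclusion of an actual point–inverse into the corresponding homotopy fibre is a weak equivalence. Granting this, the commutative square
\[
\begin{CD}
U @>{\iota}>> X\times Y\\
@V{\pi_X}VV @VV{\mathrm{pr}_X}V\\
X @= X
\end{CD}
\]
induces a map of homotopy fibre sequences over $X$; on homotopy fibres over $x$ it is identified, via the weak equivalences just invoked, with $U_x=\pi_X^{-1}(x)\hookrightarrow Y=\mathrm{pr}_X^{-1}(x)$, a weak equivalence by hypothesis. The five lemma applied to the long exact sequences of homotopy groups (together with a direct look at $\pi_0$) then gives that $\iota$ is a weak equivalence. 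An equivalent route, which makes the hypothesis that $X$ is a \emph{finite} polyhedron explicit, is to fix a triangulation and build the needed lifts one simplex at a time: the short–time homotopy lifting extends a lift from $\partial\sigma$ over a collar of $\partial\sigma$ in $\sigma$, and the fibre equivalence $U_x\simeq Y$ together with finiteness of the cell structure lets one finish the extension over each of the finitely many cells.

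The routine parts are the microfibration property (an immediate consequence of openness of $U$) and the bookkeeping of comparing two homotopy fibre sequences over the same base. The real obstacle is the microfibration lemma itself: a microfibration lifts homotopies only for a short initial time, and upgrading this to the statement that actual fibres compute homotopy fibres requires a genuine compactness argument — on $X$ and on the CW complexes probing the homotopy groups — so that the short–time lifts can be iterated finitely often, or amalgamated by a partition of unity, into a full lift. I expect that step to carry all the weight, and it is precisely where the assumption that $X$ is a finite polyhedron is indispensable.
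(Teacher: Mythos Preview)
Your argument treats $U=\lvert K_\bullet\rvert$ as an open subset of $X\times\lvert N_\bullet\rvert$, and the microfibration verification hinges on this (``since $U$ is open, each point has a product neighbourhood $V_a\times W_a\subset U$''). But the hypothesis is only that $K_q\subset X\times N_q$ is open \emph{degreewise}; it does not follow that $\lvert K_\bullet\rvert$ is open in $X\times\lvert N_\bullet\rvert$, nor even that it carries the subspace topology. The paper explicitly warns about this at the start of \S\ref{sec:second-deriv-neighb}, and the entire second-derived-neighbourhood machinery (Theorem~\ref{th:secondDerived}) is built precisely to replace $\lvert K_\bullet\rvert$ by a genuinely open subset $\mathrm{St}(\Sd^2(X\times N_\bullet),\Sd^2 K_\bullet)$ of $X\times\lvert N_\bullet\rvert$ that is homotopy equivalent to it. Your proposal skips this step entirely, and without it the microfibration argument does not get off the ground: the candidate lift $\tilde h(a,t)=(h(a,t),g(a))$ is a continuous map into $X\times\lvert N_\bullet\rvert$, but you have not shown it factors \emph{continuously} through $\lvert K_\bullet\rvert$ in its quotient topology.

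Once one \emph{does} have an honest open $U\subset X\times Y$ with contractible point-inverses over $X$, your microfibration-plus-Weiss-lemma route and the paper's Lemma~\ref{le:elementary} (proved by a direct induction over the skeleta of the finite polyhedron $X$) are essentially the same argument in different packaging. One small slip: you write that the fibre inclusion $U_x\hookrightarrow Y$ is a weak equivalence ``by hypothesis'', but the hypothesis only says $U_x$ is contractible; these coincide only when $Y$ is contractible as well. You don't actually need the comparison with $X\times Y$ at all---the microfibration lemma applied directly to $\pi_X$ with contractible fibres would suffice---but either way the missing ingredient is the open replacement supplied by Theorem~\ref{th:secondDerived}.
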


\begin{remark}
We point out that theorem~\ref{th:he-second-subdivision} together with
theorem~\ref{th:secondDerived} proves theorem \ref{th:cuts}, since 
$N_{\bullet}=N_{\bullet}(\RR^{k})$ is contractible so that
$\lvert K_{\bullet}(X,Z)\rvert\subset \lvert X 
\times N_{\bullet}(\RR^{k})\rvert$ satisfies the
assumptions of theorem~\ref{th:he-second-subdivision}. 
\end{remark}

The remainder of this section provides a proof of
lemma~\ref{le:elementary}. Given a pair of finite polyhedra
$P\subset Q$ and a commutative diagram
\[
\xymatrix{
  P \ar@{}[r]|-{{\displaystyle\subset}}\ar[d] & Q \ar[d]^{f}\\
U  \ar@{}[r]|-{\displaystyle\subset} & X \times Y
}
\]
we must show that $f$ is homotopic to a map with image in $U$, by a
homotopy that maps $P$ to $U$ at each stage. To begin with, we may
(and will) assume that the first coordinates $f_{X}=\pi_{X}\circ
f:Q\to X$ and $f_{X}\vert P$ are simplicial maps. This follows from
simplicial approximation. We will further assume that for each simplex
$\Delta$ of $P$ we have that $f(\Delta)\subset V\times W \subset U$
for open sets $U\subset X$ and $W\subset Y$. (This might require that
we further subdivide $P$ and $Q$), Below we shall use the terminology
that a subset $A\subset U$ is neatly contained in $U$ if
$\pi_{X}(A)\times \pi_{Y}(A)\subset U$.

\emph{Step 1.} We claim that there is a subdivision of $X$ such that for
each simplex $\Delta_{\alpha}$ in the subdivision we have the following
property of the pair $Q_\alpha=f_{X}^{-1}(\Delta_{\alpha}),P_{\alpha}=Q_{\alpha}\cap P$:
There is a homotopy $H_{\alpha}:Q_{x}\times I \to X\times Y$ of
$f_{\alpha}=f\vert Q_{\alpha}$ satisfying
\begin{enumerate}[(i)]
\item 
$\pi_{X}\circ H_{\alpha}(q,t)=\pi_{X}\circ f(q)$ for $q\in Q_{\alpha}$.
\item $H_{\alpha}(p,t)=f(p)$ for $p\in P_{\alpha}$.
\item $H_{\alpha}(Q_{\alpha},1)\subset \Delta_{\alpha}\times
  W_{\alpha}\subset U$ for some open $W_{\alpha}\subset Y$.
\end{enumerate}
Note that (i) is the statement that the homotopy $H_{\alpha}$ is
effectively a homotopy of $\pi_{Y}\circ f_{\alpha}$.We now proceed to
prove the claim. For $x\in X$, let $Q_{x}=f^{-1}(\{x\}\times Y)$ and
$P_{x}=P\cap Q_{x}$. Consider the diagram
\[
\xymatrix{
P_{x} \ar[d]\ar@{}[r]|-{\displaystyle\subset}& Q_{x}\ar[d]^{f}\\
U_{x}  \ar@{}[r]|-{\displaystyle\subset} &\{x\} \times Y .\\
}
\]
By assumption, we can find a homotopy 
$H_{x}\colon Q_{x}\times I \to \{x\} \times Y$, constant on $P_{x}$,
from the restriction $f\vert_{Q_{x}}$ to a map with target inside $U_{x}$. 
We can extend the 
homotopy by a constant map on $P$, to obtain a homotopy 
from  the restriction $f \colon Q_{x} \cup P \to X\times Y$ 
to a map into $U$. By construction, this homotopy is fibrewise 
over $X$.  
Since $Q_{x}$ and $P$ are sub-polyhedra of $Q$, we can extend this homotopy
to a homotopy $H_{x} \colon Q \times I \to X \times Y$
with $\pi_{X}\circ H_{x}(q,t)=f_{X}(q)$. 

Let $h(q)=H_{x}(q,1)$. 
For every point $q \in h(Q_{x})$  
there is a neighbourhood  $V_{q}$ of $q\in X$ and an open set $W_{q}\in Y$
such that $h(q)\subset V_{q}\times W_{q} \subset U$. By compactness
we can cover $Q_{x}$ by finitely many open sets 
$h^{-1}(V_{q_{i}}\times W_{q_{i}})$. Put $V_{x}^{\prime}=\cap_{i}V_{q_{i}}$
and $W_{x}=\cup_{i} W_{q_{i}}$, so that 
$Q_x\subset h^{-1}(V_{x}^{\prime}\times W_{x})\subset U$. 
The closed set $Q \setminus h^{-1}(V_{x}^{\prime}\times W_{x})$ is compact, so
$A_{x}=\pi_{X}h(Q \setminus h^{-1}(V_{x}^{\prime}\times W_{x}))$ is a closed
set in $X$, not containing $x$. 

Let 
$V_{x}=V_{x}^{\prime}\cap(X\setminus A_{x})$. For any 
$q\in (\pi_{X}f)^{-1}(V_{x})$, we have that 
$h(q)\in V_{x}\times W_{x}\subset U$, so that
$h(\pi_{X}f)^{-1}(V_{x})\subset V_{x}\times W_{x}\subset U$. It follows that
$h(\pi_{X}f)^{-1}(V_{x})$ is neatly contained in $U$.

Using the compactness of $X$ we can find a finite covering by
such open sets $V_{x}$. After some additional subdividing each simplex
$\Delta_\alpha$ of $X$
will be contained inside one of the sets $V_{x}$. Let $H_{\alpha}$ be
the restriction of $H_{x}$ to
$Q_{\alpha}=f^{-1}_{X}(\Delta_{\alpha})$.
This completes the proof of step 1.

\vspace{1ex}
In step 1 we subdivided $X$ to obtain that for each simplex $\Delta_\alpha$
(in the subdivision) we have a homotopy 
$H_\alpha:Q_\alpha \times I \to \Delta_\alpha \times Y$
from $f\vert Q_\alpha$  to a map 
$h_\alpha:Q_\alpha \to U\cap (\Delta_\alpha \times Y)$.
We next make induction over the skeletons of $X$. For the induction step, assume
that $X$ is $n$-dimensional and let $X^{n-1}\subset X$ be the $(n-1)$-skeleton. 
The induction hypothesis is that a diagram
\[
\xymatrix{
  P_{n-1} \ar@{}[r]|-{{\displaystyle\subset}}\ar[d] & Q_{n-1} \ar[d]^{f_{n-1}}&\\
U_{n-1}  \ar@{}[r]|-{\displaystyle\subset} & X^{n-1} \times Y, 
&U_{n-1}=U\cap(X^{n-1}\times Y)\\
}
\]
with $Q_{n-1},P_{n-1}$ a pair of polyhedrons permits a homotopy
$F_{n-1}:Q_{n-1}\times I \to X^{n-1}\times Y$ from $f_{n-1}$ to a map
that sends $Q_{n-1}$ into $U$; the homotopy is relative to $P_{n-1}$ in the
sense that $F_{n-1}(p,t)=f_{n-1}(p)$ for $p\in P_{n-1}$.

Let $\{\Delta_\alpha\vert \alpha \in A\}$ be the $n$-simplices of $X$ so that
$X=X^{n-1}\cup \bigcup_{\alpha\in A}\Delta_\alpha$.

\vspace{1ex}
\emph{Step 2.} For each $n$-simplex $\Delta_\alpha$ we have the homotopy
$H_\alpha:Q_\alpha \times I \to \Delta_\alpha \times Y$ constructed in step 1. We first modify $H_\alpha$  as follows.

Choose for each $\alpha\in A$ a small affine subsimplex 
$\Delta_\alpha^0\subset \Delta_\alpha$ around the barycenter and pick a function
\[
s_\alpha:\Delta_\alpha \times I \to I
\]
with
$s_\alpha(x,t)=0$ if $x\in \partial \Delta_\alpha$, $s_\alpha(x,t)=t$ if $x\in \Delta_\alpha^0$.
Define $G_\alpha:Q\times I \to X\times Y$ to be
\[
G_\alpha(q,t)=
\begin{cases}
H_\alpha(q,s_\alpha(f_X(q),t)), & q\in Q_\alpha\\
f(q) &\text{ otherwise.}
\end{cases}
\]
where $Q_\alpha=f^{-1}(\Delta_\alpha\times Y)$. For 
$q\in Q_\alpha^0=f^{-1}(\Delta_\alpha^0\times Y)$,
$G_\alpha(q,t)=H_\alpha(q,t)$ and it follows from (iii) above that
$H_\alpha(Q_\alpha^0,1)\subset \Delta_\alpha\times W_\alpha$ for some
$W_\alpha\subset Y$. 

The homotopies $G_\alpha$, $\alpha\in A$ glue together to define 
a homotopy 
\[
G:Q\times I \to X\times Y
\]
from $f$ to $g$, $g(q)=G(q,1)$, We note the properties
\begin{enumerate}[label=(\roman*),
start=4,align=left,
]
\item $g(Q_\alpha^0)\subset \Delta_\alpha \times W_\alpha\subset U$, $\alpha\in A$.
\item  $G(p,t)=f(p)$, $p\in P$.
\item $\pi_X\circ G(q,t)=f(q)$, $q\in Q$.
\end{enumerate}

\emph{Step 3.} For each $n$-simplex $\Delta_\alpha$, let 
$u_\alpha: \Delta_\alpha \times I \to \Delta_\alpha$ be a homotopy such that.
\[
u_\alpha(x,0)=x, u_\alpha(x,t)=x\text{ if }
x\in \partial \Delta_\alpha,u_\alpha(x,1)\in\partial \Delta_\alpha
\text{ if } x\in \Delta_\alpha\setminus \Delta_\alpha^0 
\]
We use $u_\alpha$ to define a homotopy $K:Q\times I\to X\times Y$
of the map $g:Q\to X\times Y$ of step 2:
\begin{align*}
K(q,t) &= g(q) \text{ if } q\in Q\setminus \cup_{\alpha \in A}
\inter \Delta_\alpha \times Y \\
\pi_XK(q,t) &=u_\alpha(\pi_X\circ q(q),t), q\in g^{-1}(\Delta_\alpha\times Y)\\
\pi_YK(q,t) &=\pi_Yg(q), q\in g^{-1}(\Delta_\alpha\times Y)\\
\end{align*}

The new map $k(q)=K(q,1)$ maps $\bigcup_{\alpha} g^{-1}(\Delta_\alpha^0\times Y)$
into $U$ and $\pi_X\circ k$ maps $Q\setminus \bigcup_\alpha \Delta_\alpha^0$ into 
$X^{n-1}$.

Set $Q_1=Q\setminus k^{-1}(\cup_{\alpha \in A}\inter \Delta_\alpha^0\times Y)$,
and
$P_1=P\cap Q_1 \cup \cup_\alpha k^{-1}(\partial \Delta_\alpha^0)$.

Then we have the diagram
\[
\xymatrix{
  P_1 \ar@{}[r]|-{{\displaystyle\subset}}\ar[d] & Q_{1} \ar[d]^{k}\\
U \cap(X^{n-1}\times Y)  \ar@{}[r]|-{\displaystyle\subset} & X^{n-1} \times Y.\\
}
\]
By the inductive assumption we can find a homotopy of $k$ to a map 
$Q_1\to U\cap (X^{n-1}\times Y))$ such that the homotopy is constant on $P_1\times I$ and hence extends to all of $Q$. This completes the proof.

\section{Metrizability of  $\Psi_d(\RR^{d+n})$}
\label{sec:topology}
This section proves theorem~\ref{le:topology}, which states that
$\Psi_{d}(\RR^{d+n})$ is metrizable in the topology defined in \S~
\ref{sec:critical-pairs-cuts}. In \ref{se:Redefinition} we give an
equivalent definition of the topology and show that
$Psi_{d}(\RR^{d+n})$
is a regular space. In the following \S~\ref{se:countable topology}
we show that the topology is also countable, and hence by a standard 
theorem that $Psi_{d}^{d+n}$ is metrizable. 
\subsection{Redefinition of the topology}
\label{se:Redefinition}
Recall that 
\[
\Psi_{d}(\RR^{d+n})=
\{W^{d}\subset \RR^{d+n}\mid \partial W=\emptyset, \text{ $W$ a closed subset}\}.
\]
Suppose that $W\in \Psi^{d}(\RR^{d+n})$. To a point $x\in W$ we
associate the point in $G(d,n)$, the Grassmannian of $d$-planes in
$\RR^{d+n}$. The point is  determined by the tangent space 
of $W$ at $x$. This gives a map
\[
W \to \RR^{d+n}\times \mathrm{G}(d,n).
\]
Chose a metric $\mu$ on the compact manifold $\mathrm{G}(d,n)$, 
and consider the product metric $\mu_{2}$ on  
$\RR^{d+n}\times \mathrm{G}(d,n)$.

For every positive $r\in \RR$, let $rD^{d+n}$ be the closed disc around
the origin of radius $r$.
Let $V,W\in \Psi_{d}(\RR^{d+n})$. 
We want to say that $V$ and $W$ are close if there is a diffeomorphism $\phi$ 
between $V\cap rD^{n+s}$ and $W\cap rD^{n+s}$, such that 
$p$ is close to $\phi(p)$ and $T_p(W)$ is close to $T_{\phi(p)}(V)$ in the chosen
metric on the Grassmannian space. 
But since a diffeomorphism doesn't necessarily preserve the distance to the origin, 
we are going to use a more careful formulation.    

Instead, we consider diffeomorphisms
$\phi:U\to s(U)$ where $U\subset V,\phi(U)\subset W$ are open subsets. 
Let ${\cal P}_{r}(V,W)$ be the set of such 
diffeomorphisms $\phi$ that satisfy
$V\cap rD^{d+n}\subset U$ and
$W\cap rD^{d+n}\subset \phi(U)$. 
When we measure the distance between $p$ and $\phi(p)$, we will only care about
points $p \in s(\phi,r)=(V\cap rD^{d+n})\cup(\phi^{-1}(W\cap rD^{d+n}))$.
So, for a diffeomorphism $\phi\in{\cal P}_r(M,N)$ we define 
$d_r^\prime(\phi)=\sup_{p\in s(\phi,r)}\mu_{2}(p,\phi(p))$. 
If $s(\phi)=\emptyset$, this number is understood to be 0.
Finally,  we define
\begin{equation}
\label{eq:DefDistance}
d_{r}(V,W)=
\inf_{\phi\in{\cal P}_{r}}(d_r^\prime(\phi))\in \RR_{+}\cup\{\infty\}.
\end{equation}
Again, if ${\cal P}_r(V,W)=\emptyset$, this number is understood to be $\infty$.

\begin{remark}
\label{re:germproperty}
The notation above is slightly abusive, since
the definition of $d^{\prime}_{r}(\phi)$ actually involves the 
source $U$ of $\phi$. This dependence is only weak. 
In the definition of $d^\prime$ we only use the values of $\phi$ in the 
closed subset $s(\phi,r)$. This
means that if $V$ is an arbitrary small neighbourhood  
of $s(\phi,r)$ in $U$, then $d^{\prime}_{r}(\phi\mid_V)=d^{\prime}_{r}(\phi)$. 

For example, suppose that $d_{r}(V,W)<d$. We can find $\phi:U\to \phi(U)$ such that
$d_r^\prime(\phi)<d$. As above, by restricting $\phi$ to a neighbourhood of
$s(\phi,r)$ in $U$, we can assume that $U$ satisfies that
$\mu_{2}(p,\phi(p))<d$ for all $p\in U$. 
\end{remark}

Here are the main properties of the functions $d_r$:

\begin{lemma}
\label{le:pseudometric}
Let $W_{i}\in \Psi_{d}(\RR^{d+n})$. Then
\begin{enumerate}[(i)]
\item
\label{enum:symmetry}
 Symmetry: $d_{r}(W_{1},W_{2})=d_{r}(W_{2},W_{1})$.
\item
\label{enum:semicontinuity}
Semi-continuity: If $r\leq r^\prime$, then $d_r(W_1,W_2)\leq d_{r^\prime}(W_1,W_2)$.
If $d_{r}(W_{1},W_{2})<\epsilon$, there are
$\delta,\epsilon_{1}>0$ so that 
$d_{r+\delta}(W_{1},W_{2})<\epsilon-\epsilon_{1}$. 
\item 
\label{enum:triangle}
Weak triangle inequality: Given numbers $r_{23}>r_{12}>r_{13}>0$, the triangle 
inequality 
\[
d_{r_{13}}(W_1,W_3)\leq d_{r_{12}}(W_1,W_2)+d_{r_{23}}(W_2,W_3)
\]
is valid for triples of manifolds $\{W_1,W_2,W_3\}$ that satisfy the
additional assumption that
$d_{r_{12}}(W_{1},W_{2}) < r_{23}-r_{12}$ and
$d_{r_{23}}(W_{2},W_{3}) < r_{12}-r_{13}$.
\end{enumerate}
\end{lemma}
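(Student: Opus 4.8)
The plan is to verify the three items in order, each time unwinding the definition of $d_r$ in terms of the diffeomorphisms $\phi\in{\cal P}_r(V,W)$ and exploiting the germ-property recorded in Remark~\ref{re:germproperty}. For item~(\ref{enum:symmetry}), symmetry is essentially built into the construction: if $\phi\colon U\to\phi(U)$ lies in ${\cal P}_r(W_1,W_2)$ then $\phi^{-1}\colon\phi(U)\to U$ lies in ${\cal P}_r(W_2,W_1)$, the set $s(\phi,r)$ is carried bijectively onto $s(\phi^{-1},r)$, and since $\mu_2$ is symmetric we get $d'_r(\phi)=d'_r(\phi^{-1})$. Taking infima over the two (mutually inverse) families of diffeomorphisms gives the claimed equality; the empty-set conventions match on both sides.

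For item~(\ref{enum:semicontinuity}), monotonicity in $r$ is immediate because enlarging $r$ enlarges $s(\phi,r)$ (hence the supremum defining $d'_r(\phi)$ can only grow) and shrinks the admissible family ${\cal P}_r$, so $\inf$ over a smaller family of larger quantities is larger. For the strict semicontinuity statement, suppose $d_r(W_1,W_2)<\epsilon$ and pick $\phi\in{\cal P}_r(W_1,W_2)$ with $d'_r(\phi)<\epsilon$; using Remark~\ref{re:germproperty} shrink $U$ so that $\mu_2(p,\phi(p))<\epsilon-2\epsilon_1$ for \emph{all} $p\in U$ and some $\epsilon_1>0$. Now $W_1\cap rD^{d+n}\subset U$ is a compact subset of the open set $U$, and likewise $W_2\cap rD^{d+n}\subset\phi(U)$ is compact in $\phi(U)$; hence there is $\delta>0$ with $W_1\cap(r+\delta)D^{d+n}\subset U$ and $W_2\cap(r+\delta)D^{d+n}\subset\phi(U)$. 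Then $\phi\in{\cal P}_{r+\delta}(W_1,W_2)$ as well, and since every point of the (still open, hence covered) set $U$ moves less than $\epsilon-\epsilon_1$, we get $d_{r+\delta}(W_1,W_2)\le d'_{r+\delta}(\phi)<\epsilon-\epsilon_1$.

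For item~(\ref{enum:triangle}), the idea is to compose diffeomorphisms. Given $\phi_{12}\in{\cal P}_{r_{12}}(W_1,W_2)$ and $\phi_{23}\in{\cal P}_{r_{23}}(W_2,W_3)$ realizing distances below $d_{r_{12}}(W_1,W_2)+\eta$ and $d_{r_{23}}(W_2,W_3)+\eta$ for small $\eta$, I would form $\phi_{13}=\phi_{23}\circ\phi_{12}$ on the open set where it makes sense. The main obstacle — and the reason for the hypotheses $d_{r_{12}}(W_1,W_2)<r_{23}-r_{12}$ and $d_{r_{23}}(W_2,W_3)<r_{12}-r_{13}$ — is that $\phi_{12}$ is only defined on a neighbourhood of $W_1\cap r_{12}D^{d+n}$, and one must check that its image contains enough of $W_2$ for $\phi_{23}$ to be applied, and then that the composite covers $W_3\cap r_{13}D^{d+n}$. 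Here the point is that $\mu_2$ dominates the Euclidean distance, so a point moving by less than $r_{23}-r_{12}$ cannot leave $r_{23}D^{d+n}$ if it started in $r_{12}D^{d+n}$; applying this twice (once with the shrinking radii $r_{13}<r_{12}$, once with $r_{12}<r_{23}$) shows $\phi_{13}$ is defined on a neighbourhood of $W_1\cap r_{13}D^{d+n}$ with image a neighbourhood of $W_3\cap r_{13}D^{d+n}$, i.e. $\phi_{13}\in{\cal P}_{r_{13}}(W_1,W_3)$. Finally the triangle inequality for $\mu_2$ gives $\mu_2(p,\phi_{13}(p))\le\mu_2(p,\phi_{12}(p))+\mu_2(\phi_{12}(p),\phi_{23}(\phi_{12}(p)))$ for $p\in s(\phi_{13},r_{13})$, and taking suprema and then infima over $\phi_{12},\phi_{23}$ (letting $\eta\to0$) yields the stated inequality. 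I expect the bookkeeping of which radius controls which inclusion to be the only genuinely delicate point; everything else is a routine unwinding of definitions.
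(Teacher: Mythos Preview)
Your proposal is correct and follows essentially the same approach as the paper: symmetry via $\phi\mapsto\phi^{-1}$, semicontinuity by noting that the open set $\{p\in U\mid\mu_2(p,\phi(p))<\epsilon-\epsilon_1\}$ contains the compact set $s(\phi,r)$ and hence a slight enlargement, and the weak triangle inequality by composing $\phi_{23}\circ\phi_{12}$ and using the radius hypotheses (together with the fact that $\mu_2$ dominates the Euclidean distance) to verify membership in ${\cal P}_{r_{13}}(W_1,W_3)$. Your bookkeeping of which inequality controls which inclusion matches the paper's, and your explicit compactness justification in part~(ii) is in fact slightly cleaner than the paper's phrasing.
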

The semi-continuity property says that if we fix $W_1$ and $W_2$,
the function $f(r)=d_r(W_1,W_2)$ is a monotonously increasing
upper semi-continuous function of $r$. It is not continuous in general.

The functions $d_r$ do not satisfy the usual triangle inequality. But they
do satisfy the weak form of the triangle inequality above, involving
several $r$. There is the technical difficulty that this 
triangle inequality is only true if the manifolds are close to each
other, that is if they satisfy the additional assumption of \ref{le:pseudometric}.\ref{enum:triangle}. 
Fortunately it turns out that these restrictions are not so
important when you study the topology defined by the all the functions
$d_r$.  
 
\begin{proof}[Prof of lemma \ref{le:pseudometric}]
(\ref{enum:symmetry}) The definition of $d_{r}$ is symmetric
in $W_{1}$ and $W_{2}$,
exchanging $\phi$ for $\phi^{-1}$.

(\ref{enum:semicontinuity}) 
Choose $\epsilon_{1}>0$ so that 
$d_{r}(W_{1},W_{2})<\epsilon-\epsilon_{1}$.
Let $\phi:U \to \phi(U)$ be in ${\cal P}_{r}(W_{1},W_{2})$ such that
for $p \in (W_1\cap rD^{n+s})\cup \phi^{-1}(W_2\cap rD^{n+s})$ we have that
$\mu_{2}(p,\phi(p))< \epsilon-\epsilon_{1}$. 

Since $U\subset W_{1}$ is open, it follows that the set
$\{p\in U\vert \mu_{2}(p,\phi(p))<\epsilon-\epsilon_{1}\}$ 
is open in $W_1$. That is, for some $\delta>0$ it contains
$(W_1\cap (r+\delta)D^{n+s})\cup \phi^{-1}(W_2\cap (r+\delta)D^{n+s})$.
But then $\phi\in {\cal P}_{r+\delta}(W_{1},W_{2})$, and the statement follows.

(\ref{enum:triangle})
Consider three manifolds $W_{1},W_{2},W_{3}$ which satisfy the
conditions $r_{23}-r_{12}> d_{r_{12}}(W_{1},W_{2})$ and
$r_{12}-r_{13}>d_{r_{23}}(W_{2},W_{3})$. 
Let $d_{12}$ and $d_{23}$ be real numbers such that  
$r_{23}-r_{12}>d_{12}> d_{r_{12}}(W_{1},W_{2})$ and
$r_{12}-r_{13}>d_{23}> d_{r_{23}}(W_{2},W_{3})$. 

In order to prove (3), we will show that 
$d_{12}+d_{23} \geq d_{r_{13}}(W_{1},W_{3})$.
Using remark \ref{re:germproperty} we can find   
$\phi_{12}: U_{1}\to \phi_{12}(U_{1})$ in ${\cal P}_{r_{12}}(W_{1},W_{2})$ and
$\phi_{23}:U_{2}\to \phi_{23}(U_{2})$ in ${\cal P}_{r_{23}}(W_{2},W_{3})$ 
such that for all $p\in U_3$ respectively for all $q\in
U_2$ we have that $\mu_{2}(p,\phi_{23}(p))<d_{23}$ and
$\mu_{2}(q,\phi_{12}(q))<d_{12}$.

In order to bound $d_{r_{13}}(W_{1},W_{3})$ from above, we 
need to construct an element in ${\cal P}_{r_{13}}(W_{1},W_{3})$. The obvious
choice to try is the composition $\phi_{23}\circ \phi_{12}$. 
One problem is that this composition might not be defined on all of $U_1$. 
The first step of the argument is  to check that the composition is
defined at least on an open neighbourhood of $s(\phi_{12},r_{12})$ in $U_{1}$.

What makes this work is that for $p\in U_{1}$ we are assuming that
$\mu_{2}(p,\phi_{12}(p))<r_{23}-r_{12}$. Because it follows from this 
and the triangle inequality for $\mu_{2}$ that
if $p\in U_{1} \cap r_{12}D^{n+s}$, then $\phi_{23}(p)\in
r_{23}D^{n+s}$.  

In particular, since $W_{2}\cap r_{23}D^{n+s}\subset
U_{2}$ we conclude that possibly after replacing $U_{1}$ by a smaller open
subset containing $s(\phi_{23},r_{1})$, we can assume that
$\phi_{12}\circ \phi_{23}$ is defined on $U_{1}$.

We now claim that 
$\phi_{23}\circ \phi_{12}\in {\cal P}_{r_{13}}(W_{1},W_{3})$. What
we have to prove is  that
 $W_{3}\cap r_{13}D^{s+n}\subset \phi_{23}\circ
\phi_{12}(U_{1})$. But if 
$q\in W_{3}\cap r_{13}D^{s+n}\subset W_{3}\cap r_{12}D^{s+n}$
then $q=\phi_{23}(p)$ for some $p\in U_{2}$. Since
$\mu_{2}(p,\phi_{23}(p))<r_{12}-r_{13}$, we have that $p\in W_{2}\cap
r_{12}D^{s+n}\subset \phi_{12}(U_{1})$, so
$q=\phi_{23}(p)\in \phi_{23}\circ \phi_{12}(U_{1})$.

Finally, the triangle inequality for $\mu_{2}$ shows that
if $p\in s(\phi_{23}\circ\phi_{12},r_{3})$, 
then 
\[
\mu_{2}(p,\phi_{23}\circ\phi_{12}(p))\leq 
\mu_{2}(p,\phi_{12}(p))+
\mu_{2}(\phi_{12}(p),\phi_{23}\circ\phi_{12}(p))\leq d_{12}+d_{23}.
\]
 
This proves (\ref{enum:triangle}) and completes the proof. 
\end{proof}
Given $M\in \Psi_{d}(\RR^{d+n}_{d})$ we define the neighborhoods
\[{\cal U}_{r,\epsilon}(M)=
\{N\in \Psi^{d}(\RR^{d+n})
\mid
d_{r}(M,N)< \epsilon\}.
\]

\begin{lemma}
\label{le:DefTopology}
The sets ${\cal U}_{r,\epsilon}(M)$ form a basis for 
a topology on $\Psi^{d}(\RR^{d+n})$. This topology is regular.
\begin{proof}
To show that the sets ${\cal U}_{r,\epsilon}(M)$
form the basis of a topology, we need to show that if
$N\in {\cal U}_{r_{1},\epsilon_{1}}(M_{1})
\cap {\cal U}_{r_{2},\epsilon_{2}}(M_{2})$, then there exist
$\epsilon^{\prime},r>0$ such that 
\[
{\cal U}_{r^{\prime},\epsilon^{\prime}}(M,)\subset
{\cal U}_{r_{1},\epsilon_{1}}(M_{1})
\cap {\cal U}_{r_{2},\epsilon_{2}}(M_{2}).
\]
It is enough to show that if  
$N\in {\cal U}_{r,\epsilon}(M)$, then there exists $r^{\prime}$ and
$\epsilon^{\prime}$ such that 
$
{\cal U}_{r^{\prime},\epsilon^{\prime}}(N)\subset{\cal U}_{r,\epsilon}(M)
$.

By lemma~\ref{le:pseudometric}.\ref{enum:semicontinuity} 
there are $\delta,\epsilon_1>0$ so that 
$N\in {\cal U}_{r+\delta,\epsilon-\epsilon_1}(M)$.
Choose a positive $\epsilon^{\prime}<\min(\delta,\epsilon_{1})$,
and put $r^\prime=r+\delta+\epsilon-\epsilon_{1}$. 
We claim that 
${\cal U}_{r^{\prime},\epsilon^{\prime}}(N)\subset {\cal
  U}_{r,\epsilon}(M)$.
Let $N^{\prime}\in {\cal  U}_{r,\epsilon}(M)$
Use lemma~\ref{le:pseudometric}.\ref{enum:triangle}  with $W_{1}=M$, $W_{2}=N$,
$W_{3}=N^{\prime}$, $r_{12}=r+\delta$, $r_{23}=r^{\prime}$ and $r_{13}=r$.
The conclusion is that 
$d_{r}(M,N^{\prime})< \epsilon$, which proves our claim.

To prove regularity, it suffices to show that for every $r,\epsilon>0$
we  separate $M$ from the complement of ${\cal U}_{r,\epsilon}(M)$.
To do this, it suffices to find an
open neighbourhood $U$ of $M$, and for every $N$ in
the complement of ${\cal U}_{r,\epsilon}(M)$ an open neighbourhood 
$V_{N}$ of $N$ disjoint from $U$.

We chose $U={\cal U}_{r+1,\epsilon/2}(M)$.
Let 
$r^{\prime}=r+1+\epsilon$ and $\epsilon^{\prime}=\min(\epsilon/2,1)$.
For $N\not\in {\cal U}_{r,\epsilon}(M)$, we chose
$V_{N}={\cal U}_{r^{\prime},\epsilon^{\prime}}(N)$.
We need to show that $U\cap V_{N}=\emptyset$. So assume to the
contrary that $N^{\prime}\in U\cap V_{N}$. 
From lemma~\ref{le:pseudometric}.\ref{enum:triangle} with
$W_{1}=M$, $W_{2}=N^{\prime}$, $W_3=N$,
$r_{12}=r+1$, $r_{23}=r+1+\epsilon/2$
and $r_{13}=r$ we obtain that $d_{r}(M,N)<\epsilon$, in contradiction 
to the assumption on $N$.
\end{proof}
\end{lemma}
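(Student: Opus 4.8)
The plan is to verify the two basis axioms and regularity by hand, since the functions $d_r$ are not a genuine pseudometric — they fail the ordinary triangle inequality and satisfy only the conditional version of Lemma~\ref{le:pseudometric}.\ref{enum:triangle} — so the usual ``metric topology'' bookkeeping is not directly available.

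For the basis property I would reduce to the claim that if $N\in{\cal U}_{r,\epsilon}(M)$ then ${\cal U}_{r',\epsilon'}(N)\subset{\cal U}_{r,\epsilon}(M)$ for suitable $r',\epsilon'>0$; given two basic neighbourhoods containing $N$, one then passes to a common refinement by taking the larger radius and the smaller $\epsilon$ among the two produced $N$-neighbourhoods. To produce $r',\epsilon'$, first use upper semi-continuity (Lemma~\ref{le:pseudometric}.\ref{enum:semicontinuity}) to find $\delta,\epsilon_1>0$ with $d_{r+\delta}(M,N)<\epsilon-\epsilon_1$, then take any $\epsilon'<\min(\delta,\epsilon_1)$ and $r'=r+\delta+(\epsilon-\epsilon_1)$. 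For $N'\in{\cal U}_{r',\epsilon'}(N)$ I would apply the weak triangle inequality with $(W_1,W_2,W_3)=(M,N,N')$ and $(r_{12},r_{23},r_{13})=(r+\delta,\,r',\,r)$: its side conditions become $d_{r+\delta}(M,N)<r'-(r+\delta)=\epsilon-\epsilon_1$ and $d_{r'}(N,N')<(r+\delta)-r=\delta$, both guaranteed by the choices, and the conclusion $d_r(M,N')\le(\epsilon-\epsilon_1)+\epsilon'<\epsilon$ places $N'$ in ${\cal U}_{r,\epsilon}(M)$.

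For regularity I would separate $M$ from the closed complement of a given ${\cal U}_{r,\epsilon}(M)$ by exhibiting an open $U\ni M$ and, for each $N\notin{\cal U}_{r,\epsilon}(M)$, an open $V_N\ni N$ with $U\cap V_N=\emptyset$. The natural choices are $U={\cal U}_{r+1,\epsilon/2}(M)$ and $V_N={\cal U}_{r',\epsilon'}(N)$ with $r'=r+1+\epsilon$ and $\epsilon'=\min(\epsilon/2,1)$. A hypothetical $N'\in U\cap V_N$ satisfies $d_{r+1}(M,N')<\epsilon/2$ and, using symmetry together with monotonicity in the radius, $d_{r+1+\epsilon/2}(N',N)\le d_{r'}(N',N)<\epsilon'$; feeding this into the weak triangle inequality with $(W_1,W_2,W_3)=(M,N',N)$ and $(r_{12},r_{23},r_{13})=(r+1,\,r+1+\epsilon/2,\,r)$ gives $d_r(M,N)<\epsilon/2+\epsilon'\le\epsilon$, contradicting $N\notin{\cal U}_{r,\epsilon}(M)$.

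The main obstacle — in fact essentially the whole content — is that the triangle inequality of Lemma~\ref{le:pseudometric} holds only when the three manifolds are mutually close \emph{relative to the gaps} $r_{23}-r_{12}$ and $r_{12}-r_{13}$ between the radii. Every auxiliary radius and every $\epsilon$ chosen above is forced precisely by the requirement that these side conditions hold as strict inequalities; checking that is the real work, after which both assertions follow formally.
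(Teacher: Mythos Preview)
Your proposal is correct and follows the paper's argument essentially verbatim: the same reduction for the basis property, the same choices $\epsilon'<\min(\delta,\epsilon_1)$ and $r'=r+\delta+(\epsilon-\epsilon_1)$ via semicontinuity, and the same separating neighbourhoods $U=\mathcal{U}_{r+1,\epsilon/2}(M)$, $V_N=\mathcal{U}_{r+1+\epsilon,\,\min(\epsilon/2,1)}(N)$ for regularity, with the same instantiations of the weak triangle inequality in both parts. If anything, you are more explicit than the paper in verifying the side conditions $d_{r_{12}}(W_1,W_2)<r_{23}-r_{12}$ and $d_{r_{23}}(W_2,W_3)<r_{12}-r_{13}$, which is exactly where the content lies.
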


\subsection{Countability of the topology}
\label{se:CountableTopology}
\begin{theorem}
\label{th:CountableTopology}
The topology of $\Psi_{d}(\RR^{n+s})$ has a countable basis.
\end{theorem}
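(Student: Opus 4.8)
The plan is to exhibit a countable family $\mathcal D\subset \Psi_d(\RR^{d+n})$ of ``finitely and rationally describable'' manifolds and to take as basis the countable collection $\mathcal B=\{\,\mathcal U_{r,\epsilon}(M)\mid M\in\mathcal D,\ r,\epsilon\in\QQ_{>0}\,\}$. First I would record the elementary observation that for each $W$ the sets $\mathcal U_{r,\epsilon}(W)$ with $r,\epsilon\in\QQ_{>0}$ already form a neighbourhood basis at $W$: any open $O\ni W$ contains a basic set $\mathcal U_{r_0,\epsilon_0}(M_0)$ with $W\in\mathcal U_{r_0,\epsilon_0}(M_0)$, and the first part of the proof of Lemma~\ref{le:DefTopology} produces $r',\epsilon'$, which may be taken rational, with $\mathcal U_{r',\epsilon'}(W)\subset\mathcal U_{r_0,\epsilon_0}(M_0)\subset O$. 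Hence $\mathcal B$ is a basis provided the following \emph{approximation property} holds: for every $W$ and all $r,\epsilon>0$ there is $M\in\mathcal D$ with $d_r(W,M)<\epsilon$. Indeed, given $W$ in an open set $O$, choose rational $R,E$ with $\mathcal U_{R,E}(W)\subset O$; then, exactly as in the regularity part of the proof of Lemma~\ref{le:DefTopology}, there are rationals $R'>R$ and $E'>0$, given by explicit formulas in $R$ and $E$, such that every $M$ with $d_{R'}(W,M)<E'$ satisfies $W\in\mathcal U_{R'',E''}(M)\subset\mathcal U_{R,E}(W)$ for suitable rational $R'',E''$, the verification using only the semicontinuity and the weak triangle inequality of Lemma~\ref{le:pseudometric} (with the side conditions of part~(iii) arranged by shrinking $E$ and $E'$). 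Applying the approximation property to these $R',E'$ places $W$ inside a member of $\mathcal B$ contained in $O$, so the theorem reduces entirely to constructing a countable $\mathcal D$ with the approximation property.

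For the construction I would fix once and for all a locally finite cover of $\RR^{d+n}$ by open balls with rational centres and radii, a subordinate smooth partition of unity depending only on rational parameters, and the finite list of coordinate $d$-planes. Given $W$, $r$, $\epsilon$, I first replace $W$ by a $C^1$-small ambient perturbation --- a diffeomorphism as $C^1$-close to the identity as desired, which changes $d_r$ by as little as we like, this being the point at which it matters that $\mu_2$ only tests position and tangent plane, i.e.\ $C^1$-data --- so that on the compact set $W\cap rD^{d+n}$ it is the graph, over each of finitely many of the fixed balls covering it, of a smooth map into $\RR^n$ over one of the fixed coordinate $d$-planes. Approximating each such graphing map $C^1$-closely by a polynomial with rational coefficients (Weierstrass), gluing the resulting local graphs by the fixed partition of unity, and capping the resulting compact, neatly embedded manifold-with-boundary off to a closed submanifold of $\RR^{d+n}$ by attaching to its (transverse) boundary spheres radial half-cylinders smoothed with fixed rational bump parameters, produces a manifold $M$; when the polynomial approximations are close enough, $M$ is $C^1$-close to $W$ over $rD^{d+n}$, hence $d_r(W,M)<\epsilon$. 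Since $M$ is determined by finitely many choices from fixed countable menus (which $r\in\mathbb{N}$, which balls, which planes, which rational polynomials, which smoothing parameters), the family $\mathcal D$ of all such $M$ is countable.

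I expect the construction of $\mathcal D$ to be the real obstacle, because three requirements must be met at once: the approximation must be $C^1$, not merely $C^0$ or PL, since $d_r$ is built from the Grassmannian metric $\mu_2$, so the charts must be fine enough and the local graphs mutually close enough for the patched object to remain a $C^1$-submanifold whose tangent planes stay close to those of $W$; the approximants must be honest closed submanifolds of all of $\RR^{d+n}$ rather than local pieces, which forces the capping-off step and a verification that it preserves closedness, smoothness and the absence of boundary; and every choice must be drawn from a fixed countable collection so that $\mathcal D$ remains countable. A secondary, purely computational nuisance is discharging the weak triangle-inequality estimate of the first paragraph together with the closeness side conditions of Lemma~\ref{le:pseudometric}(iii), but this is of the same routine nature as the corresponding step in Lemma~\ref{le:DefTopology}. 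Once $\mathcal D$ is in hand, $\Psi_d(\RR^{d+n})$ is second countable, and combined with regularity (Lemma~\ref{le:DefTopology}) this yields the metrizability asserted in Theorem~\ref{le:topology}.
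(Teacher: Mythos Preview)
Your reduction in the first paragraph --- that a countable basis follows from a countable $d_r$-dense set $\mathcal D$ together with the weak triangle inequality, exactly as in the proof of Lemma~\ref{le:DefTopology} --- is correct and is precisely the paper's route. The constructions of that dense set diverge sharply, and the paper's is worth knowing because it sidesteps exactly the obstacle you identify in your third paragraph.

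Your plan builds $\mathcal D$ explicitly from rational-polynomial graphs patched by a partition of unity and capped off. Two points would need repair before this goes through: gluing graphs written over \emph{different} coordinate $d$-planes by a partition of unity is not well-defined as stated (you need a common chart, e.g.\ sections of the normal bundle of one fixed piece), and the transverse boundary $W\cap rS^{d+n-1}$ is an arbitrary closed $(d-1)$-manifold, not a union of spheres, so the ``radial half-cylinder'' capping must be reformulated for general boundary types and shown to stay embedded and to depend only on the countable data already chosen. The paper instead argues non-constructively. For $r,\delta,K>0$ it defines a class $\mathcal X_{r,\delta}(K)\subset\Psi_d(\RR^{d+n})$ by bounding the normal curvature, the first two derivatives of the nearest-point retraction, and the injectivity radius of the normal exponential map, all over $rD^{d+n}$. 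The analytic heart is Lemma~\ref{le:ClassesOfManifolds}(iii): within $\mathcal X_{r+1,\delta}(K)$, small \emph{Hausdorff} distance already forces small $d_r$, because the normal section from one manifold to the other then has small $C^0$-norm and bounded second derivative, so a Landau--Kolmogorov inequality bounds its first derivative and hence the tangent-plane deviation. One now fixes a finite $\delta/2$-net $\{x_i\}$ in $rD^{d+n}$, assigns to each $N\in\mathcal X_{r,\delta}(K)$ the finite signature $S(N)=\{i:d(x_i,N)<\delta/2\}$, and simply \emph{chooses} one representative per realized signature to obtain a finite set $\mathcal N_{r,\delta}(K)$; the union over rational $r,\delta,K$ is the countable dense set. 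Your approach would yield an explicit $\mathcal D$ at the cost of a delicate global construction; the paper trades explicitness for a single curvature estimate and a pigeonhole, so that no closed submanifold is ever built by hand.
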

We remind the reader that the elements 
$M\in \Psi_{d}(\RR^{n+s})$ are smooth $d$-dimensional 
submanifolds which are closed sets
in $\RR^{d+n}$ . We will need to introduce a list of curvature conditions 
of $M\subset \RR^{d+n}$. Loosely speaking, these conditions will 
bound a measure of curvature from above by $K$,
but only on a normal
tube around $M$ of radius $\delta$ and inside a disc $rD^{d+n}$. 
The set of manifolds
satisfying the conditions with respect to the numbers $r,\delta,K>0$
form a set ${\cal X}_{{r,\delta}}(K)\subset \Psi_{d}(\RR^{d+n})$. We defer the 
precise formulation of the curvature conditions to later. Instead we 
formulate the properties of the sets ${\cal X}_{r,\delta}(K)$ as lemma
\ref{le:ClassesOfManifolds}.  
Then we reduce the proof of theorem \ref{th:CountableTopology} to lemma
\ref{le:ClassesOfManifolds}, and finally we discuss the proof of the lemma.

For $M,N$ in $\Psi_{d}^{d+n}$, we define the $r$-Hausdorff distance as
\[
d^{H}_{r}(M,N)=\max(\sup_{x\in M\cap rD^{n+s}}d(x,N),\sup_{x\in N\cap
  rD^{n+s}}(d(x,M)).
\]
 If $d^{H}_{r}(M,N)$ is small, the two manifolds are
pointwise close to each other after intersecting with  $rD^{n+s}$, but we don't
assume that the tangent spaces at close points are close. 
In general, a bound on the Hausdorff distance between $M$ and $N$ does
not give a bound on the distance $d_{r}(M,N)$ defined by
(\ref{eq:DefDistance})
in \S \ref{se:CountableTopology}.  

\begin{lemma}
\label{le:ClassesOfManifolds}
For $\delta,r,K>0$ 
there is a subset ${\cal
  X}_{r,\delta}(K)\subset \Psi_{d}^{d+n}$  with the following properties.
\begin{enumerate}[(i)]
\item 
\label{le:ClassesOfManifolds:ex}
Given $M\in \Psi_{d}^{d+n}$ and any 
$r>0$ there are $\delta>0,K>0$ such that
 $M\in {\cal X}_{r,\delta}(K)$.
\item 
\label{le:ClassesOfManifolds:mono}
If $r^{\prime} \leq r, \delta^{\prime} \leq \delta, K^{\prime} \leq K$,
then  ${\cal X}_{r,\delta}(K^{\prime}) \subset {\cal X}_{r^\prime,\delta^{\prime}}(K)$.
\item  
\label{le:ClassesOfManifolds:sep}
For any $r,\epsilon,K>0$ there exists $\delta>0$ 
with the following property: If
$\delta^\prime <\delta$, $M,N\in {\cal
  X}_{r+1,\delta^\prime}(K)$ and 
$d_{r+1}^{H}(M,N)<\delta^\prime$, then  $d_r(M,N) < \epsilon$. 
\end{enumerate}
\end{lemma}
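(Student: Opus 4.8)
The plan is to prove Lemma~\ref{le:ClassesOfManifolds} by choosing ${\cal X}_{r,\delta}(K)$ to be the set of manifolds whose second fundamental form (or, more conveniently, the reach-type geometry of the normal exponential map) is controlled by $K$ on the normal tube of radius $\delta$ inside $rD^{d+n}$. Concretely, I would say $M\in{\cal X}_{r,\delta}(K)$ if (a) the normal exponential map $\exp^{\nu}\colon \nu(M)_{<\delta}\to\RR^{d+n}$ restricted to the part of $M$ meeting $(r+\delta)D^{d+n}$ is a diffeomorphism onto a tubular neighbourhood, and (b) all first and second derivatives of the local graph representations of $M$ over its tangent planes are bounded by $K$ on that region. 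With this definition, parts \ref{le:ClassesOfManifolds:ex} and \ref{le:ClassesOfManifolds:mono} are essentially immediate: existence follows because any fixed smooth properly embedded $M$ is, on the compact set $M\cap rD^{d+n}$, an embedded submanifold with positive reach and bounded curvature, so some $\delta,K$ work; and monotonicity holds because shrinking $r,\delta$ only restricts the region over which the bounds are imposed while raising the allowed bound $K$ only weakens the curvature constraint --- so one must be slightly careful that the indices in \ref{le:ClassesOfManifolds:mono} are arranged so that shrinking $\delta$ does not \emph{strengthen} any hypothesis; reading the statement, ${\cal X}_{r,\delta}(K^\prime)\subset{\cal X}_{r^\prime,\delta^\prime}(K)$ is exactly the right direction, so this is routine.

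The substantive statement is \ref{le:ClassesOfManifolds:sep}, and this is where almost all the work lies. The claim is that uniform curvature control plus Hausdorff-closeness upgrades to closeness in the metric $d_r$ of \eqref{eq:DefDistance}, i.e. one can find an actual diffeomorphism between the relevant pieces of $M$ and $N$ moving points and tangent planes only slightly. First I would fix $r,\epsilon,K$ and then choose $\delta$ small relative to $\epsilon$ and $1/K$. Given $M,N\in{\cal X}_{r+1,\delta^\prime}(K)$ with $d^H_{r+1}(M,N)<\delta^\prime$, the geometric idea is: for $x\in M$ near the relevant disc, there is by Hausdorff-closeness a point of $N$ within $\delta^\prime$; project $x$ to the nearest point $\phi(x)\in N$. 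The curvature bound on $N$ guarantees (for $\delta^\prime$ small compared to $1/K$) that this nearest-point projection is well-defined, smooth, and a local diffeomorphism, and that the tangent plane $T_{\phi(x)}N$ is within $O(K\delta^\prime)$ of $T_xM$ (because $N$ is a $C^2$-graph of bounded curvature over $T_xM$ on a ball of radius $\gg\delta^\prime$, so its normal spaces vary slowly). Symmetrically $N$ maps back into $M$, and one checks $\phi$ is a diffeomorphism between suitable open neighbourhoods $U\subset M$, $\phi(U)\subset N$ containing the discs $M\cap r D^{d+n}$ and $N\cap r D^{d+n}$ respectively (here the gap between $r$ and $r+1$ is used to absorb the $\delta^\prime$-sized movements, exactly as in the proof of Lemma~\ref{le:pseudometric}\ref{enum:triangle}). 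Then $d^\prime_r(\phi)=\sup_{p\in s(\phi,r)}\mu_2(p,\phi(p))\leq \delta^\prime + C(K)\delta^\prime<\epsilon$ once $\delta^\prime<\delta$ is chosen small enough, giving $d_r(M,N)<\epsilon$.

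\textbf{The main obstacle} I expect is the precise formulation of the curvature conditions so that two things hold simultaneously: the conditions must be \emph{weak enough} that \emph{every} $M$ satisfies them for some parameters (part~\ref{le:ClassesOfManifolds:ex}), yet \emph{strong enough and uniform enough across the family} that the nearest-point projection argument in \ref{le:ClassesOfManifolds:sep} produces a genuine diffeomorphism with controlled $\mu_2$-distortion, and in particular controls the \emph{tangent-plane} distance (the $\mu$-component of $\mu_2$), not merely the point distance. Hausdorff-closeness alone is famously insufficient for tangent-plane control, so the entire content is that the curvature bound $K$ converts a $\delta^\prime$-Hausdorff approximation into a $C^1$-approximation of size $O_K(\delta^\prime)$. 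I would handle this by a standard graph-over-tangent-plane lemma: on a ball of radius $\rho \sim 1/K$ around $x$, both $M$ and $N$ are graphs of functions with $C^2$-norm $\leq K$ over $T_xM$; two such graphs whose zero-sets are $\delta^\prime$-Hausdorff-close have $C^1$-distance $\leq C(K,\rho)\,\delta^\prime$ by an elementary interpolation/mean-value estimate. Everything else --- patching the local projections into a global $\phi$, verifying the inclusions ${\cal P}_r$ requires, and the final estimate --- is bookkeeping of the kind already carried out in Section~\ref{sec:topology}. I would defer the explicit list of curvature inequalities to a separate paragraph immediately following, exactly as the text announces.
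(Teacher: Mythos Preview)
Your approach matches the paper's almost exactly: define ${\cal X}_{r,\delta}(K)$ by a reach condition on the normal exponential map together with $C^2$-type bounds, use the nearest-point projection $F_M\colon N\to M$ as the candidate diffeomorphism, and upgrade Hausdorff-closeness to $C^1$-closeness by an interpolation estimate. The paper carries out the tangent-plane step by running a unit-speed geodesic $\gamma$ in $N$, setting $f(t)=\gamma(t)-F_M\gamma(t)$, bounding $\lvert f\rvert$ by $\delta'$ and $\lvert f''\rvert$ by a constant depending on $K$, and invoking the Landau--Kolmogorov inequality to bound $\lvert f'(0)\rvert$; this is precisely your ``$C^0$ small plus $C^2$ bounded implies $C^1$ small'' mean-value estimate.

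There is one point you should not dismiss as bookkeeping. Knowing that $F_M$ is a local diffeomorphism with small $\mu_2$-distortion does \emph{not} by itself give an element of ${\cal P}_r(M,N)$: you still need $F_M$ to be injective on the relevant open set and to hit all of $M\cap rD^{d+n}$. The paper handles this by adding a further, explicitly topological, condition to the definition of ${\cal X}_{r,\delta}(K)$: for every $p\in M$ and every $\epsilon<4\delta$, the set $M\cap \inter D_\epsilon(p)$ is contractible. This condition is then used twice in the proof of \ref{le:ClassesOfManifolds:sep}: once to show that the image of $F_M$, which is a priori only a union of components of $M\cap (r+\delta)D^{d+n}$, in fact contains $M\cap rD^{d+n}$; and once to show injectivity, by arguing that two preimages $p,q\in N$ of the same point are joined by a short arc whose image in $M$ is null-homotopic, whence $p=q$ since $F_M$ is a covering. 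Your conditions (a) and (b) may well imply this contractibility for $\delta$ small relative to $1/K$, but that is itself a lemma, not a triviality; the paper chose to build it into the definition and you should either do the same or supply the argument.
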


We want to construct a countable, dense set in $\Psi_d(\RR^{d+n})$.
This set will depend on choices to be specified below. 

For the moment, we fix numbers  $r,\delta>0$. 
Chose a finite set of points $\{x_{i}\}_{i\in I}\subset rD^{d+n}$, $i\in I$  
such that for any $x\in rD^{d+n}$ there is an $i$ such that
$d(x,x_{i})<\delta/2$. For any $N\in \Psi_{d}^{d+n}$ we define 
$S(N)=\{i\in I\vert d(x_{i},N)<\delta/2\}\subset I$. If $S(N_1)=S(N_2)$,
then obviously $d^{H}_{r}(N_1,N_2)<\delta$. 

There will be a finite family $J$ of sets $j\subset I$ such that there exists
an $N\in {\cal X}_{r,\delta}(K)$ with $j=S(N)$. For each $j\in J$ chose
a manifold $N_{j}$ such that $j=S(N_{j})$.  

\begin{definition}
Let ${\cal N}_{r,\delta}(K)$ be this set of manifolds.
\end{definition}

\begin{remark}
\label{Hdense}
For any $M\in {\cal X}_{r,\delta}(K)$, there is an $N\in {\cal N}_{r,\delta}(K)$ such
that the Hausdorff distance satisfies $d^H_r(M,N)<\delta$. Actually, we can chose
$N$ as the unique $N_j\in {\cal N}_{r,\delta}(K)$ such that  $S(N_j)=S(M)$.
\end{remark}

\begin{lemma}
\label{le:countable}
The subset 
\[
{\cal N}=\bigcup_{r,\delta,K\in \QQ_+}{\cal N}_{r,\delta}(K)\subset \Psi_d(\RR^{d+n})
\]
is countable and dense.
\begin{proof}
${\cal N}$ is countable since it is a countable union of finite sets. 
We have to prove that it is dense, that is, for any
$M\in \Psi^r(\RR^{d+n})$ and any $r,\epsilon>0$ there is an $N\in\cal N$ such that
$d_r(M,N)<\epsilon$. 

By \ref{le:ClassesOfManifolds}.\ref{le:ClassesOfManifolds:ex}
we have that $M\in{\cal X}_{r+1,\delta_0}(K)$ for some
rational $r>0$ and $\delta_0,K>0$. By
 \ref{le:ClassesOfManifolds}.\ref{le:ClassesOfManifolds:mono}
we can assume that  $K$ is also a rational number.
Using \ref{le:ClassesOfManifolds}.\ref{le:ClassesOfManifolds:sep} we find
$\delta_1>0$ such that if $M,N\in {\cal X}_{r,\delta}(K)$
and $0<\delta<\delta_1$,  then $d_r(M,N)<\epsilon$.

Pick $\delta>0$ to be a rational number, $\delta<\min(\delta_0,\delta_1)$.
Then $M\in {\cal X}_{r+1,\delta}(K)$  
by \ref{le:ClassesOfManifolds}.\ref{le:ClassesOfManifolds:mono}.
According to remark \ref{Hdense} there is an $N\in {\cal N}_{r,\delta}(K)$
such that $d_r^H(M,N)<\delta$. Since $\delta<\delta_1$, it follows that
$d_r(M,N)<\epsilon$, completing the proof of the lemma.
\end{proof}
\end{lemma}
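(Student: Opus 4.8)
The plan is to separate the two assertions: countability is immediate from the construction, and density is a formal consequence of Lemma~\ref{le:ClassesOfManifolds} together with Remark~\ref{Hdense}.

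\textbf{Countability.} Each $\mathcal{N}_{r,\delta}(K)$ is finite: it is indexed by the finite family $J$ of those subsets $j\subset I$ that arise as $S(N)$ for some $N\in\mathcal{X}_{r,\delta}(K)$, with a single manifold $N_{j}$ chosen for each $j$, and the index set $I$ is finite by its choice. Since $\mathcal{N}$ is the union of these finite sets over the countable parameter set $(r,\delta,K)\in\QQ_{+}^{3}$, it is countable.

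\textbf{Density.} Fix $M\in\Psi_{d}(\RR^{d+n})$ and $r,\epsilon>0$; the goal is to produce $N\in\mathcal{N}$ with $d_{r}(M,N)<\epsilon$. First I would invoke Lemma~\ref{le:ClassesOfManifolds}(\ref{le:ClassesOfManifolds:ex}) to get $\delta_{0},K>0$ with $M\in\mathcal{X}_{r+1,\delta_{0}}(K)$, and then use the monotonicity of part~(\ref{le:ClassesOfManifolds:mono}) to shrink $r$ to a rational value and enlarge $K$ to a rational value while keeping this membership. Next, part~(\ref{le:ClassesOfManifolds:sep}) yields a $\delta_{1}>0$ such that any two manifolds in $\mathcal{X}_{r+1,\delta'}(K)$ with $\delta'<\delta_{1}$ and $(r+1)$-Hausdorff distance $<\delta'$ are within $d_{r}$-distance $\epsilon$. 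Picking a rational $\delta$ with $0<\delta<\min(\delta_{0},\delta_{1})$ gives $M\in\mathcal{X}_{r+1,\delta}(K)$ again by monotonicity, and Remark~\ref{Hdense} then supplies $N\in\mathcal{N}_{r+1,\delta}(K)$ with $d^{H}_{r+1}(M,N)<\delta$. Since $r+1,\delta,K\in\QQ_{+}$ this $N$ lies in $\mathcal{N}$, and since $N\in\mathcal{X}_{r+1,\delta}(K)$ as well, the separation estimate gives $d_{r}(M,N)<\epsilon$, as wanted.

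The real content here is entirely inside Lemma~\ref{le:ClassesOfManifolds} --- the definition of the classes $\mathcal{X}_{r,\delta}(K)$ by curvature bounds on a normal tube, and in particular the separation property~(\ref{le:ClassesOfManifolds:sep}) that a Hausdorff-close pair of sufficiently tame manifolds is automatically close in the finer metric $d_{r}$ of \S\ref{se:Redefinition}. Granting that lemma, the only point requiring care in the argument above is bookkeeping of parameters: one must work at the enlarged radius $r+1$ throughout so that Remark~\ref{Hdense} and the separation estimate apply at a common radius, and one must arrange that $r+1$, $\delta$ and $K$ are simultaneously rational so that the chosen $N_{j}$ genuinely belongs to the countable family $\mathcal{N}$. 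That is the only delicate step; everything else is formal.
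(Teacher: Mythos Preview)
Your argument is essentially the paper's own, and in fact your bookkeeping with the radius $r+1$ is cleaner than the paper's (which drifts between $r$ and $r+1$). One small slip: you write ``shrink $r$ to a rational value'', but monotonicity~(\ref{le:ClassesOfManifolds:mono}) only lets you \emph{decrease} the radius while preserving membership in $\mathcal{X}$, and that would yield only $d_{r'}(M,N)<\epsilon$ for some $r'<r$, which does not control $d_r$. The fix is to \emph{increase} $r$ to a rational at the outset---equivalently, invoke part~(\ref{le:ClassesOfManifolds:ex}) directly at a rational radius $\rho\ge r+1$---and then use $d_r\le d_{\rho-1}$ from Lemma~\ref{le:pseudometric}(\ref{enum:semicontinuity}) at the end.
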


\begin{proof}[Proof of theorem \ref{th:CountableTopology}]
We claim that the open sets ${\cal U}_{r,\epsilon}(N)$
with $N\in {\cal N}$ and  rational $r,\epsilon$ form a countable basis
for the topology.  To prove that this is a basis,  it suffices to show that 
for any $M,M^{\prime},r_1,a_1$ such that 
$M\in {\cal U}_{r,d}(M^\prime)$, there is an $N\in \cal N$ 
and $r,\epsilon>0$
so that
$M\in {\cal U}_{r,\epsilon}(N)
\subset {\cal  U}_{r_1,a_1}(M^\prime)$. By 
lemma~\ref{le:pseudometric}.\ref{enum:semicontinuity},
we can find $r_2 >r_1$ and a positive 
$a_2<a_1$ such that
$d_{r_2}(M,M^\prime)<a_2$.

Using
lemma~\ref{le:pseudometric}.\ref{enum:triangle} we can find $a,b>0$ such that
if $N\in {\cal U}_{r_2+b,a}(M)$, then
$M\in {\cal U}_{r_2+b,a}(N)\subset {\cal U}_{r_1,a_1}(M^\prime)$.
For instance, $a=(a_1-a_2)/2$ and $b=\max((r_2-r_1)/2,a_1)$ will do.
Since $\cal N$ is dense, we can find such an $N\in\cal N$, which concludes the
proof of the theorem. 
\end{proof}

We now need to define the class $X_{r,\delta}(K)$. To define this class, we write down a sequence of conditions on a manifold $M$. These conditions depend on the positive numbers $r,\delta$ and $K$. For any manifold embedded in $\RR^{n+s}$, we consider the exponential map
\[
E_M:\nu_\delta(M) \to \RR^{n+s} \qquad E_M(p,v)=v+p.
\]

{\bf Condition $1$.}
If we restrict $E_{M}$ to $\{(p,v)\in \nu M; \norm p <r,\norm
v<\delta\}$ 
this map becomes a diffeomorphism onto its image. 

{\bf Condition $2$.} The normal curvature of $M$ is bounded by $K$ in 
$M\cap rD^{n+s}$. 

The inverse of $E_{M}$ followed by projection to $M$ is a differentiable map
\[
F: E_M(\nu_\delta) \to M.
\]
The last conditions is concerned with this map. We consider $F$
as a map from an open subset of $\RR^{n+s}$ to $\RR^{n+s}$. In
particular, we can define arbitrary partial derivatives of $F$.
For any given manifold $M$, these derivatives are bounded on any
compact subset.

{\bf Condition $3$.} All first and second order derivatives
of $F$ are bounded by $K$, that is
\[
\Big\vert\frac{\partial F_i }{\partial x_j}(x)\Big\vert <K, \qquad
\Big\vert\frac{\partial^2 F_i }{\partial x_jx_k}(x)\Big\vert < K \quad\quad
\text{ for } x \in E_M(\nu_\delta\vert_{M\cap RD^{n+s}})
\]

The final condition is topological. 

{\bf Condition $4$.} If $\epsilon < 4\delta$ and $p \in M$, 
then $M\cap \inter D_\epsilon(p)$ is contractible. 

\begin{definition}
$X_{R,\delta}(K)$ is the set of manifolds $M\in\psi_d(\RR^{d+n})$
 satisfying the above conditions 1--4. 
\end{definition}

We now turn to
\begin{proof}[Proof of lemma \ref{le:ClassesOfManifolds}]
The first two statements of the lemma are easy to verify,
but we do have to prove the third statement. 
So let $r,\epsilon,K>0$ be given. We need to specify a $\delta>0$. 

Suppose that $M\in X_{r+1,\delta}(K)$ and that $N$ has normal
curvature less than $K$. Further suppose that $N\cap (r+1)D^n$ is
contained in an $\delta$ tubular neighbourhood of $M$. This defines a projection map $\pi:N \to M$. If $\delta<\epsilon/2$ then $d(x,\pi(x))<\epsilon/2$. 

We want to show that we can choose $\delta$ so that the distance in the Grassmannian manifold between $T_p(N)$ and $T_{\pi(x)}(M)$ arbitrary smaller than $\epsilon/2$  for $p\in N\cap rD^{n+s}$. 

Let $p\in N\cap rD^{n+s}$ and  $v\in T_p(N)$ a unit length tangent vector. Pick a geodesic $\gamma$ in $N$ so that $\gamma(0)=p$ and $\gamma^\prime(0)=v$. The geodesic is defined for all $t$, and if $\norm t <1$ we have that $\gamma(t)$ is contained in an $\delta$ tubular neighbourhood of $M$.
Let $f(t) = \gamma(t)-F\gamma(t)$. By elementary calculation, the second derivative of the components of $f$ satisfies
\[
f^{\prime\prime}(t)_i=\gamma_i^{\prime\prime} 
-\sum_{j,k}\frac{\partial^2F_i}{\partial x_j\partial x_k}\gamma^\prime_j\gamma^\prime_k
-\sum_j\frac{\partial F_i}{\partial x_j}\gamma^{\prime\prime}_j
\]
Recall that $\gamma$ is parametrized by arc length and has curvature bounded by $K$. By condition 3, $\norm{f^{\prime\prime}(t)_i}\leq K_1$ where
$K_1=K+K^2+K^3$.


So we obtain:  
\begin{equation}
\label{eq:Landau}
\begin{gathered}
\norm {f(t)_i} < \delta\\ 
\norm{f^{\prime\prime}(t)_i} < K_1.
\end{gathered}
\end{equation}
By elementary arguments, if a twice differentiable function defined on $[-1,1]$ satisfies (\ref{eq:Landau}), it follows that $\norm {f^\prime(0)} < \max(2\delta,2\sqrt{\delta K_1})$. This is a special case of the Landau-Kolmogorov  inequalities. Given $K$ and thus $K_1$, by choosing $\delta$ small enough, we can assure that $\norm{f^\prime(0)}=\norm{v-(F\gamma)^\prime(0)}$ can be made arbitrarily small. 

But since $(F\gamma)^\prime(0)$ is contained in the tangent space of
$M$ at $F(p)$, it follows that the projection of $v$ to the normal
space of $M$ at $F(p)$ can also be made arbitrarily small. 

Since $v$ is a unit vector, we see that we can chose $\delta$ so small
that the distance between $T_pN$ and $T_{P(p)}(M)$ can also be made smaller than $\epsilon/2$ in the Grassmannian space.

It follows that the map $F_{M}:N\to M$ satisfies that
$d_r(x,F(x))<\epsilon$.
To finish the proof, we have to show that 
$F$ is a diffeomorphism on its image.
  
At least we know that the map $F_{M}:N \to M$ is a local diffeomorphism,
since its differential is the projection of $T_pM$ to $T_{F(p)}N$. 
Let $U= N\cap (r+1-\delta)\inter D^{n+s}$. We have to show that
$F_M$ is injective on $U$, and that its image is an open set
in $M$ which contains $M\cap rD^{n+s}$. 

It's easy to see that $F_M(U)\cap (r+\delta)D^{n+s}$ is open and closed
in $M \cap (r+\delta)D^{n+s}$, so the image consists of a union of components.
If $p\in M\cap rD^{n+s}$, there is a point $q\in N$ such that
$d(p,q)<\delta$ and  $d(F_M(q),p)<2\delta$. By condition 4,
$p$ and $F_M(q)$ are in the same component of $M\cap (r+\delta)D^{n+s}$.
It follows that $p\in F_M(U)$.

Finally we need to prove that $F_M$ is injective on $U$. Suppose 
$p,q\in U$, and $F_M(p)=F_M(q)$. There is a curve connecting $p$ and $q$
of diameter less than $2\delta$. Its image in $M$ is a closed curve
of diameter less than $4\delta$, so it is null-homotopic. Since
$F_M$ is a covering map, $p=q$.
\end{proof}

\begin{lemma}
The topology we have defined on $\Psi^d(\RR^{d+n})$ has a second
countable basis. Moreover, $\Psi^d(\RR^{d+n})$ is a separable, 
metrizable topological space.
\begin{proof}
We prove the following statement:
For every $\epsilon, r, C >0$
there is a countable subset $X\subset  \Psi(\RR^{d+n})$ such that
for every $W\in \Psi(\RR^{d+n})$ whose normal curvature is bounded 
by $C$  there is an $M\in X$ such that
$d_r(W,M)<\epsilon$.

Given this, we can find a countable set $X$ such that for every 
$r, \epsilon>0$ and $W$ we can find an $M\in X$ such that
$d_r(M,W)<\epsilon$. This set is clearly dense, and using lemma
~\ref{le:pseudometric} we see that the sets
$\{{\cal U}_{r,\epsilon}(M)\mid M\in X;r,\epsilon\in  \mathbb Q\}$ form 
a countable basis for the topology. It follows from
Uhrysohns metrication theorem that the topology is metrizable.
By lemma~\ref{le:countable} it is separable.
\end{proof} 
\end{lemma}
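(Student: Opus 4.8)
The plan is to observe that the first assertion --- having a countable basis --- is exactly Theorem~\ref{th:CountableTopology}, so the only genuinely new claims are metrizability and separability, and both are now formal. First I would record that the topology is regular (Lemma~\ref{le:DefTopology}) and $T_1$: if $V\neq W$ are distinct closed submanifolds, pick $p\in W\setminus V$ and $r>\lvert p\rvert$; since $V$ is a closed set, $c:=d(p,V)>0$, and for $\epsilon<c$ every point of $V'\cap rD^{d+n}$ lies within $\epsilon$ of $V$ (it is the $\phi$-image of a point of $V$, with $\mu_2(q,\phi(q))<\epsilon$), whence $p\notin V'$ for $V'\in\mathcal U_{r,\epsilon}(V)$; thus $\mathcal U_{r,\epsilon}(V)$ is a neighbourhood of $V$ missing $W$, and symmetrically. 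Hence $\Psi^d(\RR^{d+n})$ is a second countable $T_3$ space, so by Urysohn's metrization theorem it is metrizable. Separability then holds because a second countable space is separable; alternatively the countable set $\mathcal N$ of Lemma~\ref{le:countable} is already dense.

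If instead one wants to reprove the countable-basis statement within this lemma --- as the flow of the section suggests --- the route is as follows. For fixed $\epsilon,r,C>0$ one produces a countable set $X$ such that every $W$ of normal curvature $\le C$ satisfies $d_r(W,M)<\epsilon$ for some $M\in X$: for each rational $K\ge C$, Lemma~\ref{le:ClassesOfManifolds}.(iii) supplies $\delta_K>0$ with the property that $M,N\in\mathcal X_{r+1,\delta'}(K)$ and $d^H_{r+1}(M,N)<\delta'$ force $d_r(M,N)<\epsilon$; take $X=\bigcup_{K\in\QQ_+}\mathcal N_{r+1,\delta_K}(K)$. Given $W$ of curvature $\le C$, Lemma~\ref{le:ClassesOfManifolds}.(i) puts $W\in\mathcal X_{r+1,\delta_0}(K_0)$ for suitable rational $\delta_0,K_0$; after shrinking $\delta_0$ below $\delta_{K_0}$ and using the monotonicity of Lemma~\ref{le:ClassesOfManifolds}.(ii) together with Remark~\ref{Hdense}, there is $N\in\mathcal N_{r+1,\delta_0}(K_0)\subset X$ with $d^H_{r+1}(W,N)<\delta_0$, hence $d_r(W,N)<\epsilon$. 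Unioning these countable sets over all rational $r,\epsilon$ (and over all curvature bounds $C$, equivalently over all rational $r,\delta,K$) yields the countable dense set $\mathcal N$ of Lemma~\ref{le:countable}. Finally one upgrades density to a basis exactly as in the proof of Theorem~\ref{th:CountableTopology}: given $M\in\mathcal U_{r_1,a_1}(M')$, semi-continuity (Lemma~\ref{le:pseudometric}.(ii)) gives $r_2>r_1$, $a_2<a_1$ with $d_{r_2}(M,M')<a_2$; the weak triangle inequality (Lemma~\ref{le:pseudometric}.(iii)) gives rational $a,b>0$ with $\mathcal U_{r_2+b,a}(N)\subset\mathcal U_{r_1,a_1}(M')$ whenever $N\in\mathcal U_{r_2+b,a}(M)$; and density of $\mathcal N$ supplies such an $N$.

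There is no substantial new obstacle here: the analytic heart --- that Hausdorff-closeness plus a curvature bound upgrades to closeness in the metric $d_r$ --- is already contained in Lemma~\ref{le:ClassesOfManifolds} and its Landau-Kolmogorov estimate. The one point demanding care is that a bound $C$ on the normal curvature does not by itself control the auxiliary constant $K$ entering the family $\mathcal X_{r,\delta}(K)$ (conditions 1--4), which is why the countable set must be assembled as a union over all rational $K$ rather than fixed once and for all; and, on the soft side, one must check that the hypotheses of Urysohn's theorem are present, which is precisely what Lemma~\ref{le:DefTopology} (regularity) together with the easy $T_1$ observation above provides.
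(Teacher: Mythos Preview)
Your proposal is correct and follows essentially the same route as the paper: second countability plus regularity feed into Urysohn's metrization theorem, and separability follows. Your organization is in fact cleaner than the paper's, since you simply invoke Theorem~\ref{th:CountableTopology} and Lemma~\ref{le:DefTopology} directly rather than re-sketching their content, and you add the explicit $T_1$ verification that the paper leaves implicit.
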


\subsection{Equivalence of topologies.}

We need to compare the topology of lemma \ref{le:DefTopology} 
to the tolology defined in section~\ref{sec:critical-pairs-cuts}. 

For $M\subset \RR^{d+n}$ the fine ${\cal C}^1$ topology on the space of 
${\cal C}^1$ maps $f:N\to M$ as the topology generated by the sets
\[
f:N\to M;\quad \norm{f(x)-g(x)}<\delta(x), \norm{df_x(v)-dg_x(v)}<\delta(x)\norm{v}
\]
where $\delta: M\to \RR$ is a positive, continuos function.
See \cite{Munkres}, definition 3.5.

We will need: 

\begin{theorem}[\cite{Munkres}, theorem 3.10]
\label{embeddings}
Let $M\to N$ be a ${\cal C}^{1}$ map.
If $f$ is a diffeomorphism, there is a fine neighbourhood of $f$ such
that if $g$ is in this neighbourhood, then $g$ is a diffeomorphism.
\end{theorem}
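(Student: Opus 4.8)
As this statement is quoted from \cite{Munkres}, we only indicate the shape of the argument. Write $f\colon M\to N$ for the given $C^1$ diffeomorphism and let $g$ range over a fine $C^1$-neighbourhood of $f$ cut out by a positive continuous function $\delta\colon M\to\RR$, which we shrink finitely many times and which we may as well take bounded by $1$. The plan is to check, for $g$ sufficiently close to $f$, three properties: $g$ is a local diffeomorphism, $g$ is surjective, and $g$ is injective; a bijective local diffeomorphism is a diffeomorphism.

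The local diffeomorphism property is routine. The differential $df_x$ is invertible for every $x$ and $x\mapsto\lVert(df_x)^{-1}\rVert$ is continuous, so the constraint $\delta(x)<\tfrac12\lVert(df_x)^{-1}\rVert^{-1}$ together with the defining bound $\lVert df_x(v)-dg_x(v)\rVert<\delta(x)\lVert v\rVert$, read in charts, forces $dg_x$ to stay invertible; hence $g$ is an immersion between manifolds of equal dimension, i.e.\ an open local diffeomorphism. Surjectivity follows quickly: $g(M)$ is then open; $f$, being a homeomorphism, is proper, and boundedness of $\delta$ makes $g$ proper as well, so $g(M)$ is closed, hence a union of components of $N$, and it meets every component because $g$ is $C^0$-close to the surjection $f$. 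Thus $g(M)=N$.

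The step I expect to be the real obstacle is injectivity, the point being that the fine $C^1$ topology is not first countable, so one cannot argue sequentially with $g_k\to f$: everything must be organised around a single locally finite cover and a single $\delta$. I would choose a locally finite cover $\{U_\alpha\}$ of $M$ by relatively compact opens, each $\overline{U_\alpha}$ sitting inside a chart in which $f$ is an embedding; an inverse-function-theorem estimate then yields a constraint on $\delta$ over $U_\alpha$ forcing every close $g$ to be an embedding on $\overline{U_\alpha}$. Next, for each pair $\alpha\neq\beta$ with $\overline{U_\alpha}\cap\overline{U_\beta}=\emptyset$ (using that $f$ is a homeomorphism, and refining so at least one of the two closures is compact), the sets $f(\overline{U_\alpha})$ and $f(\overline{U_\beta})$ are disjoint closed sets a positive distance apart, and shrinking $\delta$ on $U_\alpha\cup U_\beta$ below half that distance keeps $g(\overline{U_\alpha})$ and $g(\overline{U_\beta})$ disjoint. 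By local finiteness only finitely many constraints are active near any point, so they assemble into a single positive continuous $\delta$. For $g$ in the resulting neighbourhood, $g(x)=g(y)$ forces $x$ and $y$ into a common $\overline{U_\alpha}$, where the local embedding statement gives $x=y$. Combining the three properties finishes the argument.
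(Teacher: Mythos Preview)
The paper does not prove this theorem; it is stated with a citation to Munkres and used as a black box in the proof of Lemma~\ref{le:topology}. There is therefore no ``paper's own proof'' to compare against.

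Your sketch is a reasonable outline of the standard argument and is essentially how Munkres proceeds: perturb the differential to keep $g$ a local diffeomorphism, then establish surjectivity and injectivity separately. One remark on the injectivity step: you assert that ``by local finiteness only finitely many constraints are active near any point,'' but as written this is not quite true. Near a fixed $x\in U_\gamma$ you impose, for \emph{every} $\beta$ with $\overline{U_\beta}\cap\overline{U_\gamma}=\emptyset$, the bound $\delta(x)<\tfrac12\,\mathrm{dist}(f(\overline{U_\gamma}),f(\overline{U_\beta}))$, and there are infinitely many such $\beta$. What saves the argument is that the infimum of these distances over all such $\beta$ is strictly positive: if it were not, a sequence of points in the $f(\overline{U_{\beta_n}})$ would accumulate on the compact set $f(\overline{U_\gamma})$, and pulling back by the homeomorphism $f^{-1}$ would contradict local finiteness of the cover. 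With that observation your $\delta$ is well-defined and positive, and the rest of the sketch goes through.
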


We will  need the following elementary estimate.

\begin{lemma}
\label{le:elementary}
Let $V\subset \RR^{d+n}$. Let $A:V\to V,A^{\perp}:V\to V^{\perp}$ be
linear
maps. Let $W=(\mathrm{Id}+ A,A^{\perp})V\subset \RR^{d+n}$. For any $\epsilon>0$
there
are numbers $\delta_{1}>0$, $\delta_{2}>0$ such that if
$\norm{A}<\delta_{1}$ and in the metric of the Grassmannian 
$d(V,W)<\delta_{2}$, then $\norm{(A,A^{\perp})}<\epsilon$.\hfill$\square$
\end{lemma}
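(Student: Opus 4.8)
The plan is to use the standard affine chart of the Grassmannian at $V$, in which $d$-planes near $V$ appear as graphs of linear maps $V\to V^{\perp}$, together with the observation that the hypothesis $\norm A<\delta_{1}$ forces $\mathrm{Id}+A$ to be invertible, so that $W$ lies in the domain of that chart.

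First I would record the Pythagorean reduction: since $Av\in V$ and $A^{\perp}v\in V^{\perp}$ are orthogonal, $\norm{(A,A^{\perp})v}^{2}=\norm{Av}^{2}+\norm{A^{\perp}v}^{2}$, whence $\norm{(A,A^{\perp})}\le\bigl(\norm A^{2}+\norm{A^{\perp}}^{2}\bigr)^{1/2}$. So it suffices to bound $\norm A$ (already $<\delta_{1}$ by hypothesis) and $\norm{A^{\perp}}$ separately. I would also note in passing that the hypothesis $\norm A<\delta_{1}$ cannot be dropped: for $A=\lambda\,\mathrm{Id}$ with $\lambda$ large we have $W=V$, so $d(V,W)=0$ while $\norm{(A,A^{\perp})}$ is arbitrarily large.

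Next, taking $\delta_{1}\le\tfrac12$, the map $\mathrm{Id}+A\colon V\to V$ is invertible with $\norm{(\mathrm{Id}+A)^{-1}}\le 2$, and every point of $W$ has the form $\bigl((\mathrm{Id}+A)v,\,A^{\perp}v\bigr)$; setting $w=(\mathrm{Id}+A)v$ shows that $W$ is the graph of $T:=A^{\perp}(\mathrm{Id}+A)^{-1}\colon V\to V^{\perp}$, and since $A^{\perp}=T\circ(\mathrm{Id}+A)$ we get $\norm{A^{\perp}}\le(1+\norm A)\norm T\le\tfrac32\norm T$. It remains to control $\norm T$ by the Grassmannian distance. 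Here I invoke the standard chart of $G(d,n)$ at $V$: there is $\rho>0$ so that every $d$-plane $P$ with $\mu(V,P)<\rho$ is the graph of a unique $T_{P}=(\pi_{V^{\perp}}\vert_{P})\circ(\pi_{V}\vert_{P})^{-1}\in\mathrm{Hom}(V,V^{\perp})$, and $P\mapsto T_{P}$ is continuous with $T_{V}=0$. By continuity at $V$, given any $\eta>0$ there is $\delta_{2}\in(0,\rho)$ with $\mu(V,P)<\delta_{2}\Rightarrow\norm{T_{P}}<\eta$; applying this to $P=W$ gives $\norm T<\eta$.

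To finish, given $\epsilon>0$ I would set $\delta_{1}=\min\{\tfrac12,\ \epsilon/\sqrt2\}$ and then choose $\delta_{2}$ as above for $\eta=\tfrac23\cdot\epsilon/\sqrt2$. Then $\norm{A^{\perp}}\le\tfrac32\norm T<\tfrac32\eta=\epsilon/\sqrt2$, and since $\norm A<\delta_{1}\le\epsilon/\sqrt2$ we obtain $\norm{(A,A^{\perp})}^{2}<\epsilon^{2}/2+\epsilon^{2}/2=\epsilon^{2}$, as wanted. The only slightly delicate step — the main obstacle, such as it is — is the transition from the paper's fixed abstract metric $\mu$ on $G(d,n)$ to the operator norm of the graphing map; it rests solely on the fact that the Grassmannian chart is a homeomorphism, so its inverse is continuous at $V$. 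If one wishes to bypass even that, using the concrete metric $\mu(V,P)=\norm{\pi_{V}-\pi_{P}}$ yields the explicit estimate $\norm{T_{P}}\le\mu(V,P)/\sqrt{1-\mu(V,P)^{2}}$ and the whole argument becomes elementary.
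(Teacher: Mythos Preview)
Your argument is correct. The paper does not actually give a proof of this lemma: it is introduced as ``the following elementary estimate'' and the statement ends with a bare $\square$. So there is nothing to compare against; your proof supplies exactly what the authors left to the reader, via the natural route of writing $W$ as the graph of $T=A^{\perp}(\mathrm{Id}+A)^{-1}$ and using continuity of the affine chart of $G(d,n)$ at $V$. Your side remark that the hypothesis on $\norm{A}$ is essential (the example $A=\lambda\,\mathrm{Id}$) is a useful sanity check, and the explicit estimate you offer at the end for the projection metric would make the constants uniform in $V$, which is what the subsequent application in the paper implicitly uses.
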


\begin{proof}[Proof of lemma~\ref{le:topology}] 
We need to show that the topology $\cal T$ defined by the sets ${\cal N}_{K,\epsilon'}(W)$
agrees with the  topology on  $\Psi^d(\RR^{d+n})$
we have defined above.

We first show that $\cal T$ is finer than the topology of
 $\Psi^d(\RR^{d+n})$.

For any  $W,r,\epsilon$ 
we can find $K,\epsilon'$ so that
$W\in {\cal N}_{K,\epsilon'}(W) \subset {\cal U}_{r,\epsilon}(W)$.
If $s:M\to \RR^{d+n}$ is defined by
a small section of the normal bundle we put (as above)  $\phi(x)=x+s(x)$.
Then $\phi$ is a diffeomorphism from $M$ to $W$.  A vector
$v\in T_xM\subset \RR^{d+n}$ is close to the corresponding
vector $v+ds_x(v)\in T_{\phi(x)}(s(M))\subset \RR^{d+n}$. It follows that the vectorspace
$T_{s(x)}(s(M))$ is close to $T_x(M)$ in the Grassmannian, and that we
can choose $\epsilon$ so small that 
${\cal N}_{K,\epsilon'}(W) \subset {\cal U}_{r,\epsilon}(W)$.

In the general case, if $M\in {\cal U}_{r,\epsilon}(W)$, we can
use 
lemma~\ref{le:pseudometric}.\ref{enum:semicontinuity}
to find $r',\epsilon'$ so that 
$M\in {\cal U}_{r',\epsilon'}(M) \subset {\cal U}_{r,\epsilon}(W)$,
and apply the above argument to $M\in {\cal U}_{r',\epsilon'}(M)$.
It follows that the topology defined by the ${\cal N}_{K,\epsilon'}(W)$ 
is at least as fine as the topology on $\Psi^d(\RR^{d+n})$.

We have to prove the opposite implication.
Given  $W,K,\epsilon^\prime$
we need to find $r,\epsilon$ so that
\[
W \in {\cal U}_{r,\epsilon}(W) \subset {\cal N}_{K,\epsilon'}(W).
\]

If $\epsilon$ is
sufficiently small (depending on $W$ and a number $r>0$), the map 
\[
e : \nu (W \cap r\mathrm{int} D^{d+n}) \to \RR^{d+n},\quad (x,v) \mapsto x+v
\]
is a diffeomorphism onto its image.

It's inverse followed by the projection
defines a differentiable map $\pi$, where $\pi(x)$ 
denotes the unique point on $W$
which is closest to $x$. If $\epsilon$ is sufficiently small, the
composite  $\pi \circ s$ will be close to the identity in the
${\cal C}^{1}$ topology. 

It follows from theorem \ref{embeddings}
that $\pi \circ s$ is a diffeomorphism onto its image.
The inverse of $\pi$ is given by a section $s$ in the normal
bundle of $M$, and $W=\pi^{-1}(M)$ 
We still need to show that possibly after decreasing $\epsilon$  
we can make the norm of $s$ arbitrarily small. 

For each $x\in M$, we can write the differential $ds$ as a sum
$ds_{\tau}\oplus ds_{\nu}$ where 
$ds_{x\tau }\in T_{x}(M)$ and
$ds_{x\nu}\in \nu_{x}(M)$.

Supppose that the tangent bundle of $M$ has a family of
sections $t_{i}$, forming an orthogonal basis at each point.
Since $<s,t_{i}>=0$ we have that for any section $s$,
\[
\norm{ds_{x\tau}(v)}^{2}
=
(\sum_{i}{<ds(v),t_{i}>})^{2}
\leq 
\norm{s}^{2}(\sum_{i}\norm{dt_{i}(v)})^{2}
\]
Let $C$ be a constant such that 
$C^{2}\geq (\sum_{i}\norm{dt_{i}})^{2}$.
Then $\norm{ds_{\tau}}\leq C\epsilon$. Using lemma 
\ref{le:elementary}, we see that by making $\epsilon$ sufficiently small, 
we can ensure that the norm of the section $s$ 
is arbitrarly small. Finally, cover $M\cap rD^{d+n}$ by a 
a finite number of open sets, so that the tangent bundle of $M$
has a family of orthonormal sections on each of these open sets,
and repeat the argument for each of these open sets.  
\end{proof}

\section{Relation to $A$-theory}
\label{sec:Atheory}
This section describes a relation between the classifying spaces of
the embedded cobordism categories and Waldhausen's $A$-theory. More
precisely we shall describe a map
\[
\tau:\Omega B{\cal C}_{{d,n}}\to A(G(d,n))
\]
where $A(G(d,n))$ is Waldhausen's $K$-theory of the Grassmannian
$G(d,n)$. The map is an infinite loop map if $n=\infty$.
\subsection{A convenient model for $A$-theory}
Recall first the standard definition of $A(X)$ from
\cite{Atheory}. Let $R_{hf}(X)$ be the category of homotopy finite
retractive spaces over $X$. We work in the category of compactly
generated Hausdorff spaces. It has objects $(Y,r,s)$ where $r:Y\to X$,
$s:X\to Y$ and $rs=\mathrm{id}_{X}$, such that $(Y,s(X))$ is homotopy
equivalent to a finite CW complex relative to $s(X)$. This is a
category with cofibrations and weak equivalences, that is a
Waldhausen category.  

A map $i:Y_{1}\to Y_{2}$ over $X$ is a cofibration if the underlying
map (forgetting $X$) is a cofibration. Since we work in the category
of Hausdorff spaces all cofibrations are closed cofibrations(\cite{DoldHalbexakt}).
It is a weak equivalence if $i_{*}:\pi_{k}(Y_{1})\to
\pi_{k}(Y_{2})$ is a bijection for all $k$. 

Let $S_{\bullet}(X)$ denote Waldhausen's $S_{\bullet}$ construction
applied to $R_{hf}(X)$. An element of $S_{q}(X)$ is a flag 
\[
X \xrightarrow{s} Y_{1} \rightarrowtail Y_{2} \dots \rightarrowtail
Y_{q} \xrightarrow{r} X,
\]
together with a choice of quotients $Y_{j}/Y_{i} :=
Y_{j}\cup_{Y_{i}}X$, such that $X \to Y_{i}\to X$ is in
$R_{hf}(X)$. The graded set $S_{\bullet}(X)$ is a simplicial set
($d_{0}$ divides out $Y_{1}$ while $d_{j}$ omits $Y_{j}$ when $j\geq
1$). Weak equivalences of flags define a simplicial category
$wS_{\bullet}(X)$ whose nerve is the bi simplicial set 
$N_{\bullet}^{w}S_{\bullet}(X)$, and
\begin{equation}
\label{eqn:Adef}
A(X) =\Omega \vert N_{\bullet}^{w}S_{\bullet}(X)\vert.  
\end{equation}
In order to compare the embedded cobordism category with $A$-theory
we need a variant of Waldhausen's construction which we now turn
to. Let $B$ be a locally compact CW-complex satisfying \ref{B2} below. 
Let
\begin{equation}
\label{eqn:Retractions}
W(X,B)\subset R_{hf}(X)
\end{equation}
be the subset of retractive space $(Y,r,s)$ over
$X\times B$ with the two extra requirements:
\begin{enumerate}[(B1)]
\item 
\label{B1}
$Y\xrightarrow{r} X\times B \xrightarrow{\mathrm{id}_{B}} B$
is a fibration,
\item 
\label{B2}
$Y\xrightarrow{\Delta} Y\times Y$ is a cofibration.
\end{enumerate}

We use the term fibration to mean a surjective Horowitz fiber
space. A Hausdorff space which
satisfies \ref{B2} is called locally unconnected(LEC), see
\cite{DyerEilenberg}, \cite{Lewis:suspension} for a discussion of this category.
We also refer the reader to \cite{MaySigurdsson}, in particular chapter 4.

\begin{lemma}
For $X$ and $B$ LEC, $W(X,B)$ is a Waldhausen subcategory of
$R_{hf}(X\times B)$, provided $B$ is a locally compact $CW$ complex.
\begin{proof}
We must verify the axioms of \cite{Atheory}. Only the cofibration axioms  
needs to be checked. The initial object is  $X\times B$ which is
LEC, since both $X$ and $B$ are. For $(Y,r,s)\in W(X,B)$ the map
$X\times B \xrightarrow{s} Y$ is the inclusion of a retract of an LEC
and hence a cofibration \cite{DyerEilenberg}(theorem II.7), \cite{Lewis:suspension}(lemma 2.17). For cobase change: Given
\[
(Y_2,r_2,s_2)\xleftarrow{f}(Y_0,r_0,s_0)\overset i\rightarrowtail(Y_1,r_1,s_1)
\]
in $W(X,B)$ we must check that the adjunction space
$(Y_2\cup_{f}Y_1,r_2\cup_{f}r_1,s_2\cup_{f}s_1)$ is in $W(X,B)$. The
total space $Y_2\cup_{f}Y_1$ is LEC by the adjunction theorems of
\cite{DyerEilenberg} or \cite{Lewis:suspension}(theorem 2.3). Finally
\[
r_2\cup_{f}r_1:Y_2\cup_{f}Y_1 \to B
\]
is a fibration by \cite{Arnold}(theorem 2.5).
\end{proof}
\end{lemma}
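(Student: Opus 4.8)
The plan is to use that $W(X,B)$ is a full subcategory of the Waldhausen category $R_{hf}(X\times B)$, with cofibrations and weak equivalences taken to be the restrictions of those in the ambient category. With that convention Waldhausen's axiom that isomorphisms are cofibrations, and both weak-equivalence axioms (including the gluing lemma), are inherited automatically, and so is homotopy finiteness. The only thing to check is that $W(X,B)$ is \emph{closed} under the structure: it contains the distinguished initial object $X\times B$; the map $X\times B\rightarrowtail Y$ is a cofibration for every $(Y,r,s)\in W(X,B)$; and $W(X,B)$ is closed under cobase change along cofibrations, meaning that any adjunction space formed in $R_{hf}(X\times B)$ again satisfies (B1) and (B2). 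Each point reduces to a standard fact about LEC spaces and Hurewicz fibrations.

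First I would handle the initial object $X\times B$, with the identity as retraction. Condition (B1) asks that $X\times B\to B$ be a fibration, which is clear since it is the projection of a trivial bundle. Condition (B2) asks that $X\times B$ be LEC, which follows from the hypotheses that $X$ and $B$ are LEC together with the fact that a product of two LEC spaces is LEC (cf. \cite{DyerEilenberg}, \cite{Lewis:suspension}). For the axiom that $X\times B\rightarrowtail Y$ is a cofibration: the section $s$ exhibits $X\times B$ as a retract of $Y$, and $Y$ is LEC by (B2), so the inclusion of such a retract is a closed cofibration by the retract theorem for LEC spaces (\cite{DyerEilenberg}, Theorem~II.7; \cite{Lewis:suspension}, Lemma~2.17); here I use, as in \cite{DoldHalbexakt}, that in the Hausdorff setting all cofibrations are closed.

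The essential step is cobase change. Given
\[
(Y_2,r_2,s_2)\xleftarrow{f}(Y_0,r_0,s_0)\overset{i}{\rightarrowtail}(Y_1,r_1,s_1)
\]
in $W(X,B)$ with $i$ a cofibration, one forms the pushout $Y_1\cup_f Y_2$ in $R_{hf}(X\times B)$; it is again homotopy finite, and I must verify (B1) and (B2) for it (after which it automatically serves as the pushout inside $W(X,B)$). For (B2), the adjunction space $Y_1\cup_f Y_2$ is LEC by the gluing theorem for LEC spaces, applied to the cocartesian square in which $Y_0\rightarrowtail Y_1$ is a cofibration and $Y_0,Y_1,Y_2$ are all LEC (\cite{DyerEilenberg}; \cite{Lewis:suspension}, Theorem~2.3). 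For (B1), the induced map $r_1\cup_f r_2\colon Y_1\cup_f Y_2\to B$ is a fibration by the gluing lemma for Hurewicz fibrations — a pushout of fibrations along a closed cofibration is a fibration — in the form of \cite{Arnold}, Theorem~2.5, whose hypotheses are met because $B$ is a locally compact CW complex and all the cofibrations involved are closed.

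I expect the fibration-gluing step to be the only real obstacle: it is the place where the hypothesis that $B$ is a locally compact CW complex is genuinely used, and where one must be careful to match the exact hypotheses of \cite{Arnold}. The LEC-closure statements and the retract argument are formal once the appropriate results of \cite{DyerEilenberg} and \cite{Lewis:suspension} are invoked.
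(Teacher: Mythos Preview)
Your proposal is correct and follows essentially the same route as the paper: verify that the initial object $X\times B$ is LEC, that each section $s$ is a cofibration via the retract-of-LEC argument, and that pushouts stay in $W(X,B)$ by invoking the LEC adjunction theorem for (B2) and Arnold's fibration-gluing theorem for (B1). Your write-up is slightly more explicit about why the weak-equivalence axioms are inherited and about where the local compactness of $B$ enters, but the argument and the cited results are the same.
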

We define $A(X,B)$ to be the $K$-theory of $W(X,B)$, 
\begin{equation}
  \label{eq:AlgKSubcategory}
A(X,B):=\Omega \vert N^{w}_{\bullet}S_{\bullet}(W(X,B))\vert .  
\end{equation}
It is equal to $A(X)$ when $B$ is a one-point space. We next examine
$A(X,B)$ for fixed $X$ and varying $B$.

For a map $f:B_{1}\to B_{2}$ we shall construct a contravariant
functor
\begin{equation}
  \label{eq:Contravariant}
f^{*}:W(X,B_{2})\to W(X,B_{1})  
\end{equation}
and, provided that $f$ is a fibration, also a covariant functor
\begin{equation}
  \label{eq:Covariant}
f_{*}:W(X,B_{1})\to W(X,B_{2}).
\end{equation}
The functors are ``exact'' in the sense that they preserve
cofibrations and weak equivalences - they are functors of Waldhausen
categories.

The contravariant functor is defined via pullback under
$\mathrm{id} \times f:X\times B_1\to X\times B_2$. Given 
$(Y_2,r_2,s_2)\in W(X,B_2)$, let
\[
Y_{1}=\{(y_2,x,b_1)\vert (x,f(b_1))=r_{2}(y_{2})\} 
\]
This is the total space of the pull-back by $f$ of the fibration
$Y_2\to B_2$, so $Y_1$ is LEC, \cite{Heath}. There are 
obvious maps 
$r_{1}:Y_{1}\to X\times B_{1}$ and
$s_{1}:X\times B_{1}\to Y_{1}$ defining an element of $W(X,B)$. 

\begin{lemma}
The pull-back $f^*:W(X,B_2)\to W(X,B_1)$ is a functor of Waldhausen
categories.
\begin{proof}
Let $(Y_2,r_2,s_2) \rightarrowtail (Y_2',r_2',s_2')$ be a cofibration in
$W(X,B)$. Since $Y_2$ and $Y_2'$ fibers over $B$, it follows from
\cite{HeathKamps} that $i:Y_2\to Y_2'$ is a cofibration over $B$,
that is, there is a fibrewise retraction 
\[
\xymatrix{
Y_2'\times I\ar[dr]\ar[rr]^{\pi_2}&&Y_2'\times \{0\}\cup Y_2\times I\ar[dl]\\
&B_2\ .&\\
}
\]
of the obvious inclusion. Let 
$(Y_1,r_1,s_1)=f^*(Y_2,r_2,s_2)$ and
$(Y_1',r_1',s_1')=f^*(Y_2',r_2',s_2')$. Then
\[
\pi_1(y_2',t,b_1):=(\pi_2(y_2',t),b_12)
\]
defines a retraction $Y_1'\times I \to Y_1'\times \{0\}\cup Y_1\times I$
Hence $f^*$ preserves cofibrations.

The functor $f^*$ also preserves weak equivalences since it maps fibrations to
fibrations. 
\end{proof}
\end{lemma}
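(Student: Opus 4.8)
The plan is to verify, in turn, the conditions for $f^{*}$ to be an exact functor of Waldhausen categories: that it is well defined on objects, that it is functorial, and that it preserves the zero object, cofibrations, weak equivalences, and pushouts along cofibrations.

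First I would settle well-definedness. Given $(Y_{2},r_{2},s_{2})\in W(X,B_{2})$, the correspondence $(y_{2},x,b_{1})\mapsto(y_{2},b_{1})$ identifies $Y_{1}$ with the ordinary fibre product $Y_{2}\times_{B_{2}}B_{1}$, the $X$-coordinate being forced by $r_{2}$. Hence $Y_{1}\to B_{1}$ is the pullback of the fibration $Y_{2}\to B_{2}$ along $f$, so it is again a fibration, which gives (B1); the total space $Y_{1}$ is LEC by Heath's theorem \cite{Heath} that a pullback of a fibration with LEC total space along a map of LEC spaces is LEC, using that $B_{1}$ is LEC as a CW complex, which gives (B2); and $(Y_{1},s_{1}(X\times B_{1}))$ is homotopy finite relative to its section by transporting a relative finite CW structure on $(Y_{2},s_{2})$ fibrewise over the cells of $B_{1}$, using the fibration property. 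The retraction and section $r_{1},s_{1}$ are the evident ones. Functoriality ($\mathrm{id}^{*}=\mathrm{id}$ and $(gf)^{*}\cong f^{*}g^{*}$) and preservation of the zero object $X\times B_{2}\mapsto X\times B_{1}$ then follow immediately from the universal property of the pullback.

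Next, cofibrations. Let $i\colon(Y_{0},r_{0},s_{0})\rightarrowtail(Y_{0}',r_{0}',s_{0}')$ be a cofibration in $W(X,B_{2})$. Since both total spaces fibre over $B_{2}$, the Heath--Kamps criterion \cite{HeathKamps} upgrades $i$ to a cofibration over $B_{2}$: there is a fibrewise retraction $\pi\colon Y_{0}'\times I\to Y_{0}'\times\{0\}\cup Y_{0}\times I$ over $B_{2}$ of the inclusion. Pulling $\pi$ back along $\mathrm{id}\times f$ produces the analogous fibrewise retraction over $B_{1}$, so $f^{*}i$ is a cofibration in $W(X,B_{1})$. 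For weak equivalences, let $g\colon Y_{0}\to Y_{0}'$ be a $\pi_{*}$-isomorphism over $X\times B_{2}$; it commutes with the projections to $B_{2}$, so comparing the long exact homotopy sequences of the fibrations $Y_{0}\to B_{2}$ and $Y_{0}'\to B_{2}$ (five lemma) shows that $g$ restricts to a weak equivalence on each fibre. Because $f^{*}$ sends fibrations to fibrations and the fibre of $f^{*}Y_{i}$ over $b_{1}\in B_{1}$ is the fibre of $Y_{i}$ over $f(b_{1})$, the map $f^{*}g$ is a fibrewise weak equivalence of fibrations over $B_{1}$, hence, again by the five lemma on fibration sequences, a weak equivalence.

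Finally, $f^{*}$ preserves pushouts along cofibrations: in the locally cartesian closed category of compactly generated spaces the pullback functor $(\mathrm{id}\times f)^{*}$ has a right adjoint (the fibrewise dependent product), hence preserves all colimits, and the retractive-space structure is carried along; since a pushout of a cofibration in $W(X,B_{2})$ already lies in $W(X,B_{2})$ by the preceding lemma, its image under $f^{*}$ lies in $W(X,B_{1})$ by well-definedness. I expect the main obstacle to be the well-definedness step — in particular keeping the pullback inside $R_{hf}$ and checking the LEC and fibration conditions for the fibre product — since the remaining steps are essentially formal once the fibrewise point-set machinery (Heath, Heath--Kamps, local cartesian closure) is in place.
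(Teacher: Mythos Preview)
Your argument for cofibrations (Heath--Kamps fibrewise retraction, then pull it back along $f$) and for weak equivalences (fibration preservation plus the five lemma on the long exact sequences) is exactly the paper's approach; the paper's proof is simply terser, leaving the five-lemma step as the one-line remark ``since it maps fibrations to fibrations,'' and it does not separately re-verify well-definedness (that was done in the text preceding the lemma) or pushout preservation.

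One correction on the extra material: the category of compactly generated (weak) Hausdorff spaces is cartesian closed but \emph{not} locally cartesian closed, so your pushout-preservation argument via a right adjoint to $(\mathrm{id}\times f)^{*}$ does not go through as stated. The conclusion is still correct here because all the objects are fibrations over $B_{2}$ and the cofibration is fibrewise (by Heath--Kamps), so the pushout is formed fibrewise and base change along $f$ manifestly commutes with it; you should route the argument through that fibrewise description rather than through a nonexistent dependent product.
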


The covariant structure $f_{*}$ is induced from  
\[
f_{*}:=(X\times f)_{*}:R_{hf}(X\times B_{1})\to R_{hf}(X\times B_{2})
\]
that sends $(Y_{1},r_{1},s_{1})$ to $(Y_{2},r_{2},s_{2})$ with
\[
Y_{2}=X\times B_{2}\cup_{X\times f} Y_{1}
\]
We must show that it defines an element of $W(X,B_{2})$.
This follows from the references and arguments above. We
remark that  $f:B_1\to B_2$ being a fibration implies that $Y_1\to B_1\to B_2$ 
is a fibration so that \cite{Arnold} applies to show that $Y_2\to B_2$ is
a fibration.



\begin{theorem}
\label{th:homotopicMaps}
Homotopic maps $f,g:B_1\to B_2$ induce homotopic maps
\[
f^* \simeq g^* :
A(X,B_2)\to
A(X,B_1)
\]  
\begin{proof}
  We will show that the inclusion $i_0:B\times \{0\} \to B\times I$ 
induces a homotopy equivalence
\[
i_0^*:\vert N^w_\bullet S_\bullet X(X,B\times I)\vert \to
\vert N^w_\bullet S_\bullet X(X,B)\vert
\]
This suffices since the projection $B\times I \to B$ will also induce a homotopy equivalence,  and one can compose with the homotopy $B_1\times I \to B_2$ to complete the proof.

Let $h:I\to I$ be the constant map at 0. We must prove that it induces a homotopy equivalence
$h^*$ of $\vert N^w_\bullet S_\bullet X(X,B\times I)\vert$. 

This follows if we can show that for each $n$, the functor
\[
wS_n(W(X,B\times I)) \to wS_n(W(X,B\times I))
\]
induced by $h^*$, is connected to the identity by a sequence of natural transformation. Here $wS_n(-)$ denotes the category with objects $S_n(-)$ and weak equivalences as morphisms. To this end consider
\begin{gather*}
\mu: I \times I \to I, \mu(s,t)=st \\
\pi: I \times I \to I, \pi(s,t)=s,
\end{gather*}
and the canonical functor
\[
H^* : W(X,B\times I) \xrightarrow{\mu^*}
W(X,B\times I \times I) \xrightarrow{\pi_*} W(X,B\times I)
\]

Let $i_0,i_1:I \to I\times I$ be the maps $i_\nu(s)=(s,\nu)$. There is an induced diagram
\[
\xymatrix{
&W(X,B\times I)\ar[dr]^{\mathrm{id}}&\\
H^*:W(X,B\times I)\ar[ur]^{\mathrm{id}}\ar[r]^{\mu^*}\ar[dr]^{h^*}
&W(X,B\times I\times I) \ar[u]^{i_1^*}\ar[d]_{i_0^*}\ar[r]^{\pi_*}
&W(X,B\times I)\\
&W(X,B\times I)\ar[ru]^{\mathrm{id}}&
}
\]
The three functors from $W(X,B\times I)$ to itself are connected by natural transformations
\[
\mathrm{Id} \to H^* \gets h^*
\]
in the category $wW(X,B\times I)$ with morphisms being weak 
homotopy equivalences, cf. remark \ref{rem:whe} below. 
There is an induced diagram with $W(X,B\times I)$ replaced by 
$wS_nW(X,B\times I)$ and induced natural transformations. Consequently:
\begin{equation}
h^*\simeq id : N^w_\bullet S_n(W(X,B\times I)   
\end{equation}
for all $n$, so that 
\[
h^*\simeq \mathrm{id}:
A(X,B\times I)\to A(X,B\times I)
\]
by standard simplicial techniques.
\end{proof}
\end{theorem}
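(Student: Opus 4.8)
The plan is to reduce the statement to a single assertion about a constant self-map of $A(X,B\times I)$, and then to prove that assertion by the standard device of producing a chain of natural weak equivalences of exact functors. First the reduction. Choose a homotopy $H\colon B_1\times I\to B_2$ from $f$ to $g$, let $i_0,i_1\colon B_1\to B_1\times I$ be the end inclusions and $p\colon B_1\times I\to B_1$ the projection, so that $f=H\circ i_0$, $g=H\circ i_1$ and $p\circ i_\nu=\mathrm{id}$. Then $f^*=i_0^*\circ H^*$ and $g^*=i_1^*\circ H^*$, while $i_\nu^*\circ p^*=\mathrm{id}$ on $A(X,B_1)$. Hence, once we know that $p^*\circ i_0^*=(i_0\circ p)^*$ is homotopic to the identity, $i_0^*$ is a two-sided homotopy inverse of $p^*$, and likewise $i_1^*$, so $i_0^*\simeq i_1^*$ and therefore $f^*\simeq g^*$. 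Since $i_0\circ p=\mathrm{id}_{B_1}\times h$ with $h\colon I\to I$ the constant map at $0$, it suffices to prove that the exact endofunctor $h^*$ of $W(X,B\times I)$ (write $B$ for $B_1$) induces a self-map of $A(X,B\times I)$ homotopic to the identity.

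Next I would build a comparison functor. Let $\mu,\pi\colon I\times I\to I$ be $\mu(s,t)=st$ and $\pi(s,t)=s$. Pullback along $\mathrm{id}_B\times\mu$ gives an exact functor $\mu^*\colon W(X,B\times I)\to W(X,B\times I\times I)$, and because $\mathrm{id}_B\times\pi\colon B\times I\times I\to B\times I$ is a projection, hence a fibration, the pushforward $\pi_*$ is a well-defined exact functor in the other direction by the lemmas already proved; set $H^*:=\pi_*\circ\mu^*\colon W(X,B\times I)\to W(X,B\times I)$. The adjunction $\pi_*\dashv\pi^*$ supplies a unit $Z\to\pi^*\pi_*Z$ natural in $Z\in W(X,B\times I\times I)$. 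Applying it with $Z=\mu^*Y$ and then pulling back along the two sections $i_\nu\colon I\to I\times I$, $i_\nu(s)=(s,\nu)$, of $\pi$, and using $\mu\circ i_1=\pi\circ i_1=\mathrm{id}$, $\mu\circ i_0=h$, $\pi\circ i_0=\mathrm{id}$, the single unit map turns into two natural transformations of exact endofunctors of $W(X,B\times I)$,
\[
\mathrm{id}\ \Longrightarrow\ H^*\ \Longleftarrow\ h^*.
\]
One then checks that both are objectwise weak homotopy equivalences: fibrewise over $(b,s)\in B\times I$ the value of $H^*Y$ is obtained from the fibration $Y$ restricted over the path $t\mapsto(b,st)$ by collapsing its (contractible, cofibrant) section, and the two comparison maps are the inclusions of the fibres over $t=1$ and $t=0$, which are weak equivalences by the homotopy theory of fibrations together with the cofibration facts coming from conditions (B1) and (B2) (see \cite{Arnold}, \cite{Heath}, \cite{HeathKamps}). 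Finally, applying $wS_n$ degreewise turns this zig-zag into a zig-zag of natural weak equivalences between endofunctors of $wS_n\bigl(W(X,B\times I)\bigr)$, which upon taking nerves shows that $h^*$ and $\mathrm{id}$ induce homotopic maps on $N^w_\bullet S_\bullet W(X,B\times I)$, hence on $A(X,B\times I)$; combined with the first paragraph this yields $f^*\simeq g^*$.

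The hard part will be the middle step: checking that the composite $H^*=\pi_*\circ\mu^*$ really lands in the Waldhausen subcategory $W(X,B\times I)$ --- i.e. that the fibration and LEC conditions survive both $\mu^*$ and $\pi_*$ --- and, above all, that the two comparison natural transformations $\mathrm{id}\Rightarrow H^*$ and $h^*\Rightarrow H^*$ are pointwise weak equivalences; this is precisely where the fibration hypothesis (B1) is essential. The reductions in the first paragraph and the promotion of a chain of natural weak equivalences to a homotopy in the last step are formal.
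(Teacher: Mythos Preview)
Your proposal is correct and follows essentially the same route as the paper: the same reduction to showing $h^*\simeq\mathrm{id}$ for the constant map $h\colon I\to I$, the same comparison functor $H^*=\pi_*\circ\mu^*$, and the same zig-zag $\mathrm{id}\Rightarrow H^*\Leftarrow h^*$ of natural weak equivalences obtained from the sections $i_0,i_1$ of $\pi$. The only cosmetic difference is that you phrase the construction of the natural transformations via the unit of the adjunction $\pi_*\dashv\pi^*$, whereas the paper packages the same map and its weak-equivalence property into a separate remark (Remark~\ref{rem:whe}).
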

\begin{remark}
\label{rem:whe}
Given $f:B\to C$, $i:C\to B$ with $f\circ i=\mathrm{id}$. Let
$(Y,r,s)\in W(X,B)$ and $Y_0=i^*(Y,r,s)$. Then the inclusion
\[
Y_0 \to X\times C\cup_ {1\times f} Y
\]
is a weak homotopy equivalence if $f:B\to C$ is a weak homotopy equivalence.  
\end{remark}
We now let $B$ vary over the standard simplices $\Delta^p$ to get a simplicial space
\begin{equation}
\label{eq:parametrizedA}
  [p] \mapsto \vert N^w_\bullet S_\bullet (X, \Delta^p)\vert,
\end{equation}
where the simplicial maps are induced from the 
standard face and degeneracy maps $\Delta^p \to \Delta^q$
via the contravariant structure 
(\ref{eq:Contravariant}). 

It follows from Theorem (\ref{th:homotopicMaps}) that all structure maps in 
(\ref{eq:parametrizedA}) are weak homotopy equivalences. Thus
\begin{corollary}
\label{cor:WX}
  \[
\vert N^w_\bullet S_\bullet (W, \Delta^\bullet)\vert \simeq
\vert N^w_\bullet S_\bullet (X)\vert.
\]
\end{corollary}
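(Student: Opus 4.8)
The plan is to recognize Corollary~\ref{cor:WX} as an instance of the standard principle that a simplicial space all of whose structure maps are weak homotopy equivalences has its realization weakly equivalent to its space of $0$-simplices.

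First I would set $Z_\bullet$ to be the simplicial space (\ref{eq:parametrizedA}), $Z_p=\vert N^w_\bullet S_\bullet(X,\Delta^p)\vert$, so that the left hand side of the corollary is the realization $\vert Z_\bullet\vert$ in the $\Delta^\bullet$ direction and the right hand side is $Z_0=\vert N^w_\bullet S_\bullet(X,\Delta^0)\vert=\vert N^w_\bullet S_\bullet(X)\vert$ (using that $\Delta^0$ is a point). Next I would check that every face and degeneracy map of $Z_\bullet$ is a weak homotopy equivalence: each coface $\delta_i\colon\Delta^{p-1}\to\Delta^p$ and each codegeneracy $\sigma_i\colon\Delta^{p+1}\to\Delta^p$ is a map between contractible, locally compact $CW$ complexes, hence a homotopy equivalence; and since $B\mapsto\vert N^w_\bullet S_\bullet(X,B)\vert$ is functorial in $B$ via the contravariant structure (\ref{eq:Contravariant}) and, by Theorem~\ref{th:homotopicMaps} (whose proof in fact produces the equivalence already at the level of $\vert N^w_\bullet S_\bullet(X,-)\vert$, which is what is needed here), it carries homotopic maps to homotopic maps, it carries homotopy equivalences to homotopy equivalences. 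Consequently $\alpha^*\colon Z_q\to Z_p$ is a weak homotopy equivalence for every $\alpha\colon[p]\to[q]$ in $\Delta$; this is exactly the content of the remark immediately preceding the corollary, which I would cite.

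Then I would run the homotopy-constancy argument. The iterated degeneracies $s_0^n\colon Z_0\to Z_n$ are the operators attached to the unique morphisms $[n]\to[0]$ of $\Delta$, and so they assemble into a map of simplicial spaces $cZ_0\to Z_\bullet$ out of the constant simplicial space; by the previous step this map is a weak homotopy equivalence in each degree. A degreewise weak homotopy equivalence of simplicial spaces induces a weak homotopy equivalence on fat realizations, so $\Vert cZ_0\Vert\to\Vert Z_\bullet\Vert$ is a weak homotopy equivalence; since $\Vert cZ_0\Vert\simeq Z_0$, and the $S_\bullet$-nerve constructions are good simplicial spaces so that $\Vert Z_\bullet\Vert\simeq\vert Z_\bullet\vert$, I would conclude $\vert N^w_\bullet S_\bullet(X,\Delta^\bullet)\vert=\vert Z_\bullet\vert\simeq Z_0=\vert N^w_\bullet S_\bullet(X)\vert$, as asserted.

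The one step that deserves care — and which I expect to be the main, if minor, obstacle — is the passage from ``degreewise weak equivalence'' to ``weak equivalence of realizations'': this is where the cofibrancy (goodness) of the $S_\bullet$-nerve simplicial spaces enters, needed to identify the fat and the thin realization. The cleanest route is to carry out the comparison with the fat realization throughout, as above, and to verify goodness separately (degeneracies of the $S_\bullet$-nerve are closed cofibrations). Everything else — contractibility of the standard simplices, functoriality of the pullback construction, the input from Theorem~\ref{th:homotopicMaps}, and the identification $\Vert cZ_0\Vert\simeq Z_0$ — is routine.
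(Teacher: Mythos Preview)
Your proposal is correct and is exactly the argument the paper has in mind: the paper's entire proof is the sentence immediately preceding the corollary (``It follows from Theorem~\ref{th:homotopicMaps} that all structure maps in (\ref{eq:parametrizedA}) are weak homotopy equivalences''), and you have simply spelled out the standard deduction from that observation. Your only caveat about goodness is in fact harmless here, since $N^w_\bullet S_\bullet W(X,\Delta^p)$ is a bisimplicial \emph{set} for each $p$, so the whole object is a trisimplicial set and no cofibrancy hypotheses are needed.
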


\subsection{The map to $A$-theory}

Recall from section \ref{sec:homotopy-type} the two versions $N_\bullet^\delta {\cal C}_{d,n}^k$
and $N_\bullet {\cal C}_{d,n}^k$ with weakly equivalent geometric realizations. 
In this section we define a simplicial map 
\begin{equation}
  \label{eq:MapToA}
  \tau:\mathrm{sin}_\bullet N_\bullet^\delta {\cal C}_{d,n}^k \to
S^{(k)}_{\bullet} W(D(d,n),\Delta),
\end{equation}
where $\mathrm{sin}_\bullet(X)$ denotes the simplicial set which in degree
$p$ consists of singular simplices $\Delta^p\to X$, $S_\bullet^{(k)}$ is the $k$-fold iterated $S_\bullet$- construction and $W(G(d,n),\Delta^p)$ the Waldhausen category defined above.

We start with the case $k=1$, where we write ${\cal C}_{d,n}$ instead of ${\cal C}_{d,n}^1$. We must define 
\[
\tau_{p,q}:\mathrm{sin}_p N^\delta_ {d,n}\to
S_qW(G(d,n),\Delta^p)
\]
compatible with the bisimplicial structure maps. This requires some preparations about the structure of $N_q^\delta {\cal C}_{d,n}$ which we now turn to. See also \S 2.1 of \cite{GMTW}. 

Let $W^d$ be an abstract (as opposed to embedded) cobordism from $M_0$ to $M_1$, equipped with disjoint collars
\[
h_0:[0,1]\times M_0 \to W,\quad
h_1:[0,1]\times M_1 \to W
\]
and let $\mathrm{Emb}_\epsilon(W,[0,1]\times \RR^{d+n-1})$ denote the space of smooth embeddings
\[
e: W \to [0,1]\times \RR^{d+n-1}
\]
such that there are embeddings $e_\nu: M_\nu \to \RR^{d+n-1}$ with
\[
e\circ h_0(t_0,x_0)=(t_0,e_0(x_0)),\quad
e\circ h_1(t_1,x_1)=(t_1,e_1(x_1))
\]
where $e_0\in [0,\epsilon)$ and $e_1\in (1-\epsilon,1]$. Similarly, let
$\mathrm{Diff}_\epsilon(X)$ be the group of diffeomorphisms that restricts to product diffeomorphisms on the $\epsilon$-collars. We let 
$\mathrm{Emb}(-,-)$ and $\mathrm{Diff}(-,-)$  denote the colimits as
$\epsilon \to 0$, Define
\begin{align*}
  E_n(W)&:=\mathrm{Emb}(W,[0,1]\times \RR^{d+n-1})\times_{\mathrm {Diff}(W)}(W)\\
  B_n(W)&:=\mathrm{Emb}(W,[0,1]\times \RR^{d+n-1})/{\mathrm{Diff}(W)}
\end{align*}
The projection $\pi:E_n(W)\to B_n(W)$ is a smooth fiber bundle of infinite dimensional smooth manifolds in the convenient topology of \cite{KrieglMichor}, in fact an embedded bundle in the sense of the diagram
\[
\xymatrix{
E_n(W)\ar[d]^\pi\ar@{^{(}->}[r]& B_n(W) \times \RR^{d+n}\ar[dl]\\
B_n(W)&
}
\]
Moreover, a smooth map $B^m \to B_n(W)$ from a finite dimensional  manifold $B^m$ induces smooth embedded fiber bundle of finite dimensional manifolds
\[
\xymatrix{
E^{m+d}\ar[d]^\pi\ar@{^{(}->}[r]& B^m \times \RR^{d+n}\ar[dl]\\
B^m&\ ,
}
\]
and continuous maps into $B_n(W)$ can be approximated by smooth maps, so the set of homotopy classes of continuous maps from $B^m$ to $B_n(W)$ is equal to the set of homotopy classes of continuous maps.

For a closed $(d-1)$- dimensional manifold $M^{d-1}$ there is a similar smooth fiber bundle of infinite dimensional manifolds
\[
\xymatrix{
E_n(M)\ar[d]^\pi\ar@{^{(}->}[r]& B_n(M) \times \RR^{d+n-1}\ar[dl]\\
B_n(M)&
}
\]
as in \S 2.1 of \cite{GMTW}.

We topologize $N_1^\delta {\cal C}_{d,n}$ as the disjoint union of the object space $N_0^\delta{\cal C}_{d,n}$ and of the space of non-identity morphisms. Then there is a homeomorphism
\begin{equation}
  \label{eq:N1homeo}
N_1^\delta {\cal C}_{d,n}\cong
\coprod_{\{M\}}(B_n(M^{d-1})\times \RR^\delta)\sqcup
\coprod_{\{W\}}(B_n(W^{d})\times (\RR^2_+)^\delta)
\end{equation}
where the disjoint union is over certain diffeomorphism classes of 
closed $(d-1)$-manifolds, respectively compact $d$-dimensional cobordisms, 
namely the diffeomorphism classes that embed in 
$\RR^{d+n-1}$ resp. $\RR^{n+d}$. 
We note that (\ref{eq:N1homeo}) gives 
$N_1^\delta{\cal C}_{d,n}$ the structure of an infinite dimensional smooth manifold.

Let $\sigma :\Delta^p \to N_1^\delta{\cal C}_{d,n}$ be a smooth $p$-simplex,
landing in a non-identity component. This induces a smooth embedded 
fiber bundle.
\[
\xymatrix{
E[a_0,a_1] \ar[d]^\pi\ar@{^{(}->}[r]& \Delta^p\times [a_0,a_1]\times \RR^{d+n-1}\ar[dl]\\
\Delta^p&, \mathrm{dim} E[a_0,a_1]=p+d.
}
\] 
For $z\in E[a_0,a_1]$, the vertical tangent space $T_z^\pi E[a_0,a_1]$ is a subspaces of $\{\pi(z)\}\times \RR^{d+n}$. This defines a map
\[
\tau: E[a_0,a_1]\to G(d,n)
\]
into the Grassmannian. Let $E(a_0)$ be the left-hand boundary of $E[a_0,a-1]$ and consider the retractive space
\begin{equation}
  \label{eq:RetractiveSpace}
  G(d,n)\times \Delta^p\xrightarrow{s}
E[a_0,a_1]\cup_{E(a_0)}G(d,n)\times \Delta^p\xrightarrow{r}
G(d,n)\times \Delta^p
\end{equation}
with $r=(\tau,\pi)$ on $E[a_0,a_1]$ and the identity on 
$G(d,n)\times \Delta^p$. The composition
\begin{equation*}
E[a_0,a_1]\cup_{E(a_0)}G(d,n)\times \Delta^p\xrightarrow{r}
G(d,n)\times \Delta^p\to \Delta^p
\end{equation*}
is a fibration with LEC total space. Thus (\ref{eq:RetractiveSpace}) defines an element of the Waldhausen category $W(G(d,n),\Delta^p)$. The resulting map
\[
\mathrm{sin}_p(N_1^\delta{\cal C}_{d,n})
\to
W(G(d,n),\Delta^p)
\]
respects the simplicial identities as $p$ varies. Quite similarly, a singular $p$-simplex of 
$N_q^\delta{\cal C}_{d,n}$ defines a sequence of codimension zero embeddings
\[
E[a_0,a_1]\subset E[a_0,a_2]\subset\dots \subset
E[a_0,a_q]\subset \Delta^p\times [a_0,a_q]\times \RR^{d+n}
\]
fibering over $\Delta^p$ with 
$E[a_0,a_{i+1}]=E[a_0,a_i]\cup_{E(a_i)} E[a_i,a_{i+1}]$.
This amounts to a map 
\[
\tau_{p,q}:\mathrm{sin}_pN^\delta_q{\cal C}_{d,n}\to
S_qW(G(d,n), \Delta^p)
\]
that gives rise to a bisimplicial map
\[
\tau_{\bullet,\bullet}:\mathrm{sin}_\bullet N^\delta_\bullet{\cal C}_{d,n}\to
S_\bullet W(G(d,n),\Delta^p).
\]
We can include $S_\bullet(W)$ into $N_\bullet^wS_\bullet(W)$
and get by corollary \ref{cor:WX}:
\[
\tau: \vert \mathrm{sin}_\bullet N^\delta_\bullet{\cal C}_{d,n}\vert\to
\vert N^w_\bullet S_\bullet(G(d,n)\vert.
\]
Finally, the canonical map $\vert \mathrm{sin}_\bullet(X)\vert \to X$
is a weak equivalence, and $\vert N^\delta _\bullet{\cal C}_{d,n}\vert
\sim \vert N_\bullet{\cal C}_{d,n}\vert$ by \ref{sec:homotopy-type}.
This proves
\begin{theorem}
  Tangents along the fiber induces a weak map
\footnote{A weak map from $X$ to $Y$ is a composite of the form $X \gets X^\prime \to Y$ with $X^\prime \to X$ a weak homotopy equivalence, for instance an invertible map in the homotopy category associated to a model category defining the weak homotopy equivalences.}
\[
\Omega B{\cal C}_{d,n}\to A(G(d,n)).
\tag*{$\square$}
\]
\end{theorem}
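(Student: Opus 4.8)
The plan is to assemble the construction carried out immediately above the statement and to check that each ingredient has the claimed property; the heart of the matter is entirely local and is already sketched there, so what remains is (a) to see that the retractive space (\ref{eq:RetractiveSpace}) really is an object of the Waldhausen category $W(G(d,n),\Delta^p)$, (b) to see that $\sigma\mapsto\tau_{p,q}(\sigma)$ respects the bisimplicial structure, and (c) to pass to realizations, invoke Corollary~\ref{cor:WX}, identify the source with $B{\cal C}_{d,n}$, and loop once. Note that $G(d,n)$, being a closed smooth manifold, is LEC and locally compact CW, so the categories $W(G(d,n),\Delta^p)$ and the $S_\bullet$-machinery of \S\ref{sec:Atheory} are available.

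For (a): a smooth singular simplex $\sigma\colon\Delta^p\to N_q^\delta{\cal C}_{d,n}$ gives, via the homeomorphism (\ref{eq:N1homeo}) and its $q$-fold analogue, a nested family of smooth embedded fibre bundles $E[a_0,a_1]\subset\cdots\subset E[a_0,a_q]$ over $\Delta^p$ with $E[a_0,a_{i+1}]=E[a_0,a_i]\cup_{E(a_i)}E[a_i,a_{i+1}]$, together with the Gauss map $\tau\colon E[a_0,a_q]\to G(d,n)$ sending a point to its vertical tangent space, i.e.\ the ``tangents along the fibre''. I would then check that the total space of (\ref{eq:RetractiveSpace}) is LEC, using that $E(a_i)\hookrightarrow E[a_0,a_j]$ is a collared inclusion of compact manifolds with corners, hence a cofibration, and that a pushout of LEC spaces along a cofibration is LEC; and that its projection to $\Delta^p$ is a Hurewicz fibration with homotopy-finite (compact-cobordism) fibres, the fibration property coming from a gluing lemma for fibrations over a common base, exactly as in the cobase-change verification of \S\ref{sec:Atheory}. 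I expect this point-set step to be the main obstacle: it is where the topology of LEC spaces and Hurewicz fibrations and the smooth-bundle structure of $N_1^\delta{\cal C}_{d,n}$ must all be used with care; everything else is bookkeeping.

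For (b): in the $q$-direction $d_0$ divides out $E[a_0,a_1]$ and re-bases the remaining flag along $E(a_1)$, while the identity $E[a_0,a_{i+1}]=E[a_0,a_i]\cup_{E(a_i)}E[a_i,a_{i+1}]$ makes the other faces and the degeneracies match those of $S_\bullet$; in the $p$-direction the structure maps are pullback along the standard maps $\Delta^p\to\Delta^q$, i.e.\ precisely the contravariant functoriality (\ref{eq:Contravariant}). Thus $\tau_{\bullet,\bullet}$ is a bisimplicial map $\mathrm{sin}_\bullet N_\bullet^\delta{\cal C}_{d,n}\to S_\bullet\,W(G(d,n),\Delta^\bullet)$. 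I would also record that smooth and continuous singular simplices compute the same homotopy type here, by the approximation statement for maps into $B_n(W)$ noted above, so $\mathrm{sin}_\bullet N_\bullet^\delta{\cal C}_{d,n}$ may be used as written.

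For (c): composing $\tau_{\bullet,\bullet}$ with the inclusion $S_\bullet(W)\hookrightarrow N^w_\bullet S_\bullet(W)$ and realizing gives a map $|\mathrm{sin}_\bullet N_\bullet^\delta{\cal C}_{d,n}|\to|N^w_\bullet S_\bullet\,W(G(d,n),\Delta^\bullet)|$, whose target Corollary~\ref{cor:WX} identifies with $|N^w_\bullet S_\bullet(G(d,n))|$ up to weak equivalence. Since $|\mathrm{sin}_\bullet X|\to X$ is always a weak equivalence and $|N_\bullet^\delta{\cal C}_{d,n}|\simeq|N_\bullet{\cal C}_{d,n}|=B{\cal C}_{d,n}$ by the diagram (\ref{eq:continuous-to-discrete}) of weak equivalences, $\tau$ becomes a weak map $B{\cal C}_{d,n}\to|N^w_\bullet S_\bullet(G(d,n))|$. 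Applying $\Omega$, which sends a weak map to a weak map, and using $A(G(d,n))=\Omega|N^w_\bullet S_\bullet(G(d,n))|$ from (\ref{eqn:Adef}), produces the asserted weak map $\Omega B{\cal C}_{d,n}\to A(G(d,n))$.
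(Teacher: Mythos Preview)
Your proposal is correct and follows essentially the same approach as the paper: the paper's proof is precisely the construction preceding the theorem, namely building the bisimplicial map $\tau_{\bullet,\bullet}\colon\mathrm{sin}_\bullet N_\bullet^\delta{\cal C}_{d,n}\to S_\bullet W(G(d,n),\Delta^\bullet)$ from the fibrewise Gauss map, including into $N^w_\bullet S_\bullet$, invoking Corollary~\ref{cor:WX}, and then using the weak equivalences $|\mathrm{sin}_\bullet X|\to X$ and $|N_\bullet^\delta{\cal C}_{d,n}|\simeq B{\cal C}_{d,n}$ before looping. Your expansion of step~(a), spelling out the LEC and fibration checks via the collared cofibration $E(a_i)\hookrightarrow E[a_0,a_j]$ and the gluing lemma, is exactly the content the paper compresses into the single sentence asserting that (\ref{eq:RetractiveSpace}) lies in $W(G(d,n),\Delta^p)$.
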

The remainder of this section will argue that the map $\tau$ in the above theorem is an infinite loop map (when $n=\infty$). Theorems \ref{th:delooping} and \ref{prop:he} imply that the classifying space $B{\cal C}^k_ {d,n}$ is a $(k-1)$-fold deloop of $B{\cal C}_{d,n}$, provided that $k\leq d+n$, For a Waldhausen category ${\cal C}$, the iterated $S_\bullet$-construction $S_\bullet^{(k)}$ deloops $S_\bullet{\cal C}$ by proposition 1.5.3 of \cite{Atheory}.  Since 
$N_\bullet^\delta{\cal C}^k_{d,n}$ is weakly equivalent to 
$N_\bullet{\cal C}^k_{d,n}$, the delooping of $\tau$ is achieved by extending its definition to a multi-simplicial map 
\begin{equation}
  \label{eq:Deloop}
  \tau^k:\mathrm{sin}_\bullet N_\bullet^\delta {\cal C}^k_{d,n}\to
S_\bullet^{(k)}W(G(d,n),\Delta^\bullet)
\end{equation}
for $k\leq d+n$.

The construction of $\tau^k$ is completely similar to the case of $k=1$; we give the details for $k=2$. Let $a_0<a_1<\dots <a_p$ and 
$b_0<b_1<\dots <b_q$ be two sequences of real numbers. Write
\begin{align*}
  J_{i,j}&=[a_0,a_i]\times [b_0,b_j]\subset \RR^2\\
\partial_0 J_{i,j} &= \{a_0\}\times [b_0,b_j]\cup [a_0,a_i]\times \{b_0\}.\\
\end{align*}
Given a smooth singular simplex $\sigma :\Delta^s\to N^\delta_{p,q}{\cal C}^2_{d,n}$
we get sequences 
 $\underline a =(a_0<a_1<\dots <a_p)$, 
$\underline b=(b_0<b_1<\dots <b_q)$ that do not vary with $z\in \Delta^s$ and
\[
E^{p+q}\subset \Delta^s\times J_{p,q}\times \mathrm{int}(I^{d+n-2}). 
\]
Let $E_{i,j}=E\cap \Delta^s\times J_{i,j} \times \mathrm{int}(I^{d+n-2})$.
It is a compact manifold with corners and the projection
$E_{i,j} \to \Delta^s$ is a smooth fiber bundle where tangents along the fibers
give compatible maps from $E_{i,j}$ to $G(d,n)$. Form
\[
Y_{i,j} = E_{i,j}\cup_{\partial_0 E_{i,j}} G(d,n)\times \Delta^s\in
W(G(d,n), \Delta^s)
\]
where $\partial_0 E_{i,j}=E_{i,j}\cap \partial_0 J_{i,j}$. The diagram
\[
\xymatrix{
Y_{1,1}\ar@{>->}[r]\ar@{>->}[d]
&Y_{2,1}\ar@{>->}[r]\ar@{>->}[d]& \dots\ar@{>->}[r] &Y_{p,1}\ar@{>->}[d]\\
Y_{1,1}\ar@{>->}[r]\ar@{>->}[d]
&Y_{2,1}\ar@{>->}[r]\ar@{>->}[d]& \dots\ar@{>->}[r] &Y_{p,1}\ar@{>->}[d]\\
\vdots\ar@{>->}[d]&\vdots\ar@{>->}[d]&&\vdots\ar@{>->}[d]\\
Y_{1,1}\ar@{>->}[r]
&Y_{2,1}\ar@{>->}[r]& \dots\ar@{>->}[r] &Y_{p,1}\\
}
\]
represents an element of $S_\bullet S_\bullet W(G(n,d), \Delta^s)$,
and the resulting map
\[
\mathrm{sin}_s N^\delta _{p,q} {\cal C}^2_{d,n}\to
S_\bullet S_\bullet W(G(n,d), \Delta^s)
\]
commutes with the simplicial structure maps . This defines the map
(\ref{eq:Deloop}) for $k=2$. The general case $k<2$ is entirely similar.


\bibliographystyle{plain}
\bibliography{regular}

\end{document}